\documentclass[10pt,a4paper]{article}
\usepackage[utf8]{inputenc}
\usepackage[T1]{fontenc}
\usepackage{amsmath}
\usepackage{amsthm}
\usepackage{amsfonts}
\usepackage{amssymb}
\usepackage{graphicx}
\usepackage[inner=25mm,outer=25mm,top=30mm,bottom=30mm]{geometry}
\usepackage{times}

\usepackage{enumitem}
\usepackage{frenchineq}

\usepackage{authblk}

\usepackage{caption}
\usepackage{subcaption}
\usepackage{float}

\usepackage{xcolor}
\definecolor{color1}{rgb}{0.85,0.15,0.5}
\definecolor{color2}{rgb}{1,0.7,0}
\definecolor{color3}{rgb}{0.4,0.6,1}
\definecolor{color4}{rgb}{0,0,0}
\usepackage[scr=boondox,scrscaled=1.05]{mathalfa}
\usepackage[outline]{contour}

\usepackage{url}
\usepackage{graphicx}
\usepackage{pgf,tikz,tikz-network}
\usetikzlibrary{arrows,calc,patterns,patterns.meta,arrows.meta}
\usepackage{cancel}
\usepackage[colorinlistoftodos,bordercolor=orange,backgroundcolor=orange!20,linecolor=orange,textsize=scriptsize]{todonotes}

\usepackage[linesnumbered,ruled,vlined,nofillcomment]{algorithm2e}
\SetKwInput{KwInput}{input}
\SetKwInput{KwOutput}{output}

\SetCommentSty{comments}

\usepackage{hyperref}
\usepackage{cleveref}

\usepackage{pifont}
\usepackage{blkarray}

\usepackage{stackengine,scalerel} 
\usepackage{multicol}

\newcommand{\N}{\mathbb{N}}
\newcommand{\Npos}{\mathbb{N}_{> 0}}
\newcommand{\Z}{\mathbb{Z}}
\newcommand{\Q}{\mathbb{Q}}
\newcommand{\R}{\mathbb{R}}
\newcommand{\Rnonneg}{\mathbb{R}_{\geq 0}}

\newcommand{\C}{\mathbb{C}}

\newcommand{\T}{\mathbb{T}}

\newcommand{\tplus}{\oplus}  
\newcommand{\tsum}{\bigoplus}
\newcommand{\tdot}{\odot}

\newcommand{\bal}{\mathrel{\nabla}}
\newcommand{\nbal}{\mathrel{\cancel{\nabla}}}
\DeclareMathAlphabet{\mathbbold}{U}{BOONDOX-ds}{m}{n}
\newcommand{\zero}{{\mathbbold{0}}}
\newcommand{\unit}{\mathbbold{1}}
\newcommand{\NP}{\mathrm{NP}}

\def\<#1,#2>{\langle #1,#2\rangle}


\newcommand{\aff}{\operatorname{aff}}
\DeclareMathOperator*{\argmax}{arg\,max}

\newcommand{\cl}{\operatorname{cl}}

\newcommand{\conv}{\operatorname{conv}}

\newcommand{\hypo}{\operatorname{hypo}}

\newcommand{\relint}{\operatorname{relint}}

\newcommand{\supp}{\operatorname{supp}}
\newcommand{\tdet}{\operatorname{tdet}}
\newcommand{\trop}{\operatorname{trop}}
\newcommand{\val}{\operatorname{val}}

\newcommand{\ver}{\operatorname{ver}}


\newcommand{\hahnseries}[2]{#1[[t^{#2}]]}
\newcommand{\vfield}{\mathbf{K}}
\newcommand{\vgroup}{\Gamma}

\newcommand{\calA}{\mathcal{A}}
\newcommand{\calE}{\mathcal{E}}

\newcommand{\scrF}{\mathscr{F}}
\newcommand{\scrN}{\mathscr{N}}

\newcommand{\mac}{\mathcal{M}}
\newcommand{\var}{\mathcal{V}}

\newcommand{\tvar}{\mathcal{V}_\mathsf{trop}}

\newcommand{\tropi}{\mathcal{T}}




\newcommand{\cell}{\mathcal{C}}

\newcommand{\face}{\mathcal{F}}

\newcommand{\longto}{\longrightarrow}
\newcommand{\nrhd}{\ntriangleright}
\newcommand{\puis}{\boldsymbol}
\newcommand{\supco}{\mathbin{\scriptstyle \square}}

\newtheorem{thm}{Theorem}[section]
\crefname{thm}{Theorem}{Theorems}
\newtheorem{prp}[thm]{Proposition}
\crefname{prp}{proposition}{Propositions}
\newtheorem{lem}[thm]{Lemma}
\crefname{lem}{Lemma}{Lemmas}
\newtheorem{cor}[thm]{Corollary}
\crefname{cor}{Corollary}{Corollaries}

\theoremstyle{definition}
\newtheorem{dfn}[thm]{Definition}
\crefname{dfn}{Definition}{Definitions}
\newtheorem{obs}[thm]{Observation}
\crefname{obs}{Observation}{Observations}

\crefname{asm}{Assumption}{Assumptions}

\theoremstyle{remark}
\newtheorem{rmk}[thm]{Remark}
\crefname{rmk}{Remark}{Remarks}
\newtheorem{expl}[thm]{Example}
\crefname{expl}{Example}{Examples}
\newtheorem{prt}[thm]{Property}
\crefname{prt}{Property}{Properties}

\newenvironment{info}{%
 \small \centering \textbf{Article info} \par \flushleft}{}

\author[1,2]{Marianne Akian}
\author[2,1]{Antoine Béreau}
\author[1,2]{Stéphane Gaubert}
\affil[1]{\normalsize Inria, 91120 Palaiseau, France}
\affil[2]{\normalsize CMAP, CNRS, \'Ecole polytechnique, Institut Polytechnique de Paris, 91120 Palaiseau, France}

\title{The Nullstellensatz and Positivstellensatz for Sparse Tropical Polynomial Systems}
\date{}

\begin{document}
\maketitle

\begin{abstract}
Grigoriev and Podolskii (2018) have established a tropical analogue of the effective Nullstellensatz, showing that a system of tropical polynomial equations is solvable if and only if a linearized system obtained from a truncated Macaulay matrix is solvable. They provided an upper bound of the minimal admissible truncation degree,
as a function of the degrees of the tropical polynomials.
We establish a tropical Nullstellensatz adapted to {\em sparse} tropical polynomial systems. Our approach is inspired by a construction of Canny-Emiris (1993), refined by Sturmfels (1994). This leads to an improved bound of the truncation degree, which coincides with the classical Macaulay degree in the case of $n+1$ equations in $n$ unknowns. We also establish a tropical Positivstellensatz, allowing one to decide the inclusion of tropical basic semialgebraic sets. This allows one to reduce decision problems for tropical semi-algebraic sets to the solution of systems of tropical linear equalities and inequalities.
\end{abstract}

\vspace*{\baselineskip}

\begin{info}
\begin{center}
\begin{tabular}{ll}
\textbf{Keywords:} & Tropical geometry, Polynomial systems, Newton polytope, Tropical hypersurfaces,\\
& Combinatorics of convex polytopes\\[3pt]
\textbf{MSC classes:} & 14T15 (Primary) 05E14, 14N10, 52B20 (Secondary)
\end{tabular}
\end{center}
\end{info}


\section{Introduction}

\subsubsection*{\bf Motivation}
A basic problem in computational tropical geometry consists in solving systems of equations or of inequations involving multivariate tropical polynomials.

Such systems arise in the study of {\em non-archimedean amoebas}, which are images by a non-archimedean valuation of an algebraic set~\cite{EKL05}. A super-approximation of a non-archimedean amoeba  can be obtained by translating the defining equations of the algebraic set in the tropical semifield. This leads to a system of tropical polynomials equations, the solution set of which is called 
a {\em tropical prevariety}. Moreover, when working over an algebraically closed non-archimedean field, this approximation is exact under appropriate genericity conditions, or if the defining set is sufficiently rich, see in particular the `fundamental theorem of tropical geometry' in~\cite{MS15}, in which case the non-archimedean amoeba is called a  {\em tropical variety}.

Similarly, the solution sets of systems of tropical polynomial inequalities provide upper approximations of images by a non-archimedean valuation of semi-algebraic sets over real closed non-archimedean fields, and these approximations are exact under genericity conditions~\cite{itenberg_viro,AGS20,Jell2020}. Moreover, owing to their combinatorial nature, tropical polynomial systems are often easier to grasp than their classical analogues. These ideas, which can be traced back to works of Viro (see~\cite{viro}), Gelfand, Kapranov, Zelevinsky~\cite{GKZ94}, Sturmfels (see~\cite{Huber1995}), or Mikhalkin~\cite{mikhalkin_enumerative}, are at the heart of tropical geometry, see~\cite{itenberg_book,MS15} for background.

Systems of tropical polynomial equations and inequations are also interesting in their own right, since they arise in specific applications, independently of the nonarchimedean interpretation. In particular, they arise in
the computation of stationary behaviors of discrete event systems~\cite{CGQ95a}, see e.g.~\cite{emergency15} for an application to performance evaluation of emergency call centers. Other motivations arise from auction theory~\cite{Baldwin2019}, or chemical reaction networks, see e.g.~\cite{radulescu}.

Grigoriev and Podolskii established in~\cite{GP18} a {\em tropical Nullstellensatz}, which states that a system of tropical polynomial equations is solvable if and only if its linearization, obtained by truncating the Macaulay matrix up to an appropriate degree, is solvable. Their results
also apply to polynomial inequations. Since systems of tropical linear
equations and inequations reduce to mean payoff games~\cite{AGG12}, this provides both theoretical tools (strong duality theorems) and algorithms. The proof of~\cite{GP18}, confirming a conjecture made by Grigoriev~\cite{Grigoriev2012}, is based on very ingenious geometric arguments. However, Grigoriev and Podolskii observed that their proof leads to an estimate of the truncation degree which may not be optimal. 

\subsubsection*{\bf Contribution}

We provide here a {\em tropical Nullstellensatz} (\textbf{\Cref{thm:nullstellensatz}}) for systems of tropical equations and inequations, taking into account the {\em sparse} structure. When specialized to full polynomials with given degrees, we recover the statement of Grigoriev and Podolskii, with an improved degree bound, matching the classical Macaulay degree bound for systems of $n+1$ equations in $n$ unknowns, see \cite{Laz81, Laz83, Giu84}, and also~\cite{Bronstein2005,Cox2015} for background. Our approach also leads to a {\em tropical Positivstellensatz} (\textbf{\Cref{thm:positivstellensatz}}), allowing one to check the inclusion between two tropical basic semialgebraic subsets of $\R^n$, by reduction to a system with both strict and weak tropical linear inequalities. This two results are ultimately combined in the most general form in an \textit{hybrid tropical Positivstellensatz} (\textbf{\Cref{thm:positivstellensatz-hybrid}}).

To do so, we connect the tropical Nullstellensatz with a fundamental notion in classical polynomial system solving, the {\em Canny-Emiris sets}.
In \cite{CE93} and \cite{Emi05}, Canny and Emiris developed an algorithm to compute sparse resultants, for systems of $n+1$ polynomial equations in $n$ variables. This involves the construction of a submatrix of the Macaulay matrix,
in which rows and columns are determined by considering the integer points of a generic perturbation of the Minkowski sum of the Newton polytopes of the polynomials.
Their construction was generalized by Sturmfels in \cite[\S 3]{Stu94}, by considering a generic collection of polyhedral concave functions. Here, we apply this tool to a different problem, showing that any Canny-Emiris set leads to a tropically valid linearization. In a nutshell, we show that if a tropical system is unfeasible, then, the `row contents' arising in the Canny-Emiris construction provide a submatrix of the Macaulay matrix which is tropically nonsingular, and which therefore serves as an unfeasibility certificate.  The case of systems combining polynomial equalities as well as weak and strict inequalities is more delicate: it relies on additional ingredients, including the Shapley-Folkman theorem, and it leads to a degree bound which may not be optimal.
In this case, the reduction involves tropical linear systems combining equalities with weak and strict inequalities. The latter still reduce to mean-payoff games~\cite{uli2013}.

An advantage of the linearization approach lies in the {\em scalability} of mean-payoff games algorithms. Whereas the existence of a polytime algorithm to solve a mean-payoff game is a long standing open problem, in practice, typical large sparse instances of mean-payoff games can be solved efficiently by value-iteration type algorithms~\cite{zwick_paterson,AGG12} or by policy iteration algorithms, see in particular~\cite{dhingra,chaloupka}. The application of game algorithms to solve instances arising from tropical polynomial systems is further discussed in the final section of~\cite{ABG23a}.

For simplicity, we state and prove the present tropical Nullstellensatz and Positivstellensatz by working over the ordinary tropical semifield $\R_\infty$. More generally, a tropical semifield can be constructed over any divisible (totally) ordered group, and this is helpful to study `higher rank' tropicalizations~\cite{Aro10a, Aro10b, AGT16, AI22, JS23}.
Then, using the completeness of the first order theory of divisible ordered groups~\cite[\S 4.3]{Rob56}, it is immediate to see that the present results carry over to this case.

\subsubsection*{\bf Related work}

As discussed above, the present work is inspired by~\cite{GP18}. The main contribution here is the handling of sparsity, with a new proof, leading to an improved truncation bound in the dense case. We also handle strict inequalities, which allows us to interpret the results in terms of `Positivstellens\"atze'.
However, contrarily to what is done in~\cite{GP18}, we limit our attention to the `toric case', looking only for solutions in $\R^n$, see \Cref{rmk:non-toric} for more information.

The standard approach to the computation of tropical prevarieties is to exploit the duality between arrangements of tropical hypersurfaces and mixed-polyhedral subdivisions. In that way, decision problems concerning tropical prevarieties can be reduced to the enumeration of mixed cells, see~\cite{jensenhomotopy,Malajovich2016}. A number of current works deal with the efficient computation of tropical varieties and prevariarieties, see~\cite{Markwig2019, Ren2022} and the references therein.

In these approaches, {\em all} solutions are typically constructed. In contrast, when applying the tropical Nullstellensatz or Positivstellensatz, we do not need to enumerate all the cells of a polyhedral complex (which amounts to testing all candidate solutions). Instead, we directly decide the feasibility or unfeasibility by reduction to a mean payoff game. In the special case of linear inequalities, the advantage of the second approach can be quantified: there are exponentially many tropically extreme solutions~\cite{AlGK09}, hence exponentially many cells, making the enumeration unfeasible unless the dimension is only of a few dozens, whereas checking the feasibility reduces to solving a mean payoff game with a size linearly bounded in input size, which can be done for large systems with pseudo-polynomial complexity bounds. This advantage subsists for a significant class of higher degree instances, although the size of the game now becomes exponential in the input size. We expect the present approach to be especially useful in the `Positivstellensatz' case, in which the exhaustive cell enumeration is especially heavy, whereas only a single feasible cell, or an unfeasibility certificate, is looked for.

A different approach relies on the application of general-purpose SMT solving algorithms~\cite{SMTSolving}.

A tropical Nullstellensatz for tropical ideals, building on~\cite{GP18}, is established in~\cite{rincon}. Other tropical Nullstellens\"atze of different natures have also been established in~\cite{shustin,Bertram2017,JM17,GP20}.

The initial account of the present results appeared in the ISSAC conference paper~\cite{ABG23a}, which also discusses algorithmic aspects. The present paper contains the proofs of the theoretical results of~\cite{ABG23a}.
An implementation of algorithms to decide the feasibility of tropical polynomial systems, using the present sparse Nullstellensatz and Positivstellensatz, is available from~\cite{bereau2023}.

\subsubsection*{Acknowledgments}

We thank Matías Bender for his insightful comments in the comparaison between the classical and the tropical case, as well as for bringing to our attention the Masser-Philippon example (see \Cref{rmk:tight}). We also thank Yue Ren for mentioning a few of the aforementioned potential applications of this work. Finally, we thank the ISSAC reviewers for their helpful feedback.

\section{Generalities on valued fields, polynomials and varieties}

Throughout this paper, we will use the typographic convention to denote tropical objects with normal weight fonts, and reserve bold fonts to denote their classical analogues in a valued field $\vfield$ (most of the time objects in the field of univariate complex generalized Puiseux series).

First, we recall the definition of the various notions we will be working with throughout this paper.
Let $(\Gamma,+,\leq)$ be a totally ordered abelian group. Then the group law and ordering on $\Gamma$ can be extended to the set $\Gamma \cup \lbrace \bot \rbrace$, where $\bot$ denotes the \textit{bottom} element, by setting $\bot \leq v$ and $\bot + v = v + \bot = \bot$ for all $v \in \Gamma$. In $\Gamma \cup \lbrace \bot \rbrace$, the maximum function is defined as usual.

\begin{dfn}\label{def:val}
Let $\vfield$ be a field and $\Gamma$ a totally ordered abelian group. A map $\val$ from $\vfield$ to $\Gamma \cup \lbrace \bot \rbrace$ is called a \textit{valuation} if it satisfies the following properties for all $\puis{x},\puis{y} \in \vfield$:
\begin{enumerate}[label=\it(\roman*)]
\item $\val(\puis{x}) = \bot$ if and only if $\puis{x} = 0$;
\item $\val(\puis{x}\puis{y}) = \val(\puis{x}) + \val(\puis{y})$;
\item $\val(\puis{x}+\puis{y}) \leq \max(\val(\puis{x}),\val(\puis{y}))$.
\end{enumerate}
A field endowed with a valuation is called a \textit{valued field}.
\end{dfn}
Note that in valuation theory, it is more common to call valuation
the opposite of the map $\val$. This would lead us to work which
min-plus type, rather than max-plus type, tropical semifields.

Note that from items (i) and (ii) of the previous definition, we obtain in particular that $\val$ induces a group morphism from $(\vfield^{\ast},\times)$ to $(\Gamma,+)$, where $\vfield^{\ast}:=\vfield\setminus\{0\}$ is the set of invertible elements. This motivates the following definition.

\begin{dfn}
The \textit{value group} of a valued field $\vfield$ is the group $\vgroup := \val(\vfield^{\ast})$.
\end{dfn}

\begin{rmk}
The value group $\vgroup$ is thus a subgroup of $\Gamma$. However, in a lot of cases, we will be working with $\Gamma = \R$. In this case, we say that $\vfield$ is a field with \textit{real valuation} and that $\val$ is a \textit{real valuation}.
\end{rmk}

\begin{expl}
A basic example of valued field is the field $\C \lbrace\!\lbrace t \rbrace\!\rbrace$ of complex \textit{Puiseux series} over one indeterminate. This field is endowed with the valuation given by
\[
\val(\puis{x}) := -\min \lbrace q \in \Q : a_q \neq 0 \rbrace \quad \textrm{if} \quad \puis{x} = \sum_{q \in \Q} \puis{x}_q t^q
\]
and in this case we have $\vgroup = \Q$.

We will also consider the field $\hahnseries{\C}{\R}$ of univariate complex \textit{Hahn series}\footnote{Sometimes the notation $\C((\R))$ is also encountered, notably in \cite{MS15}.}, that is the set of formal sums
\[
\puis{x} = \sum_{r \in \R} \puis{x}_rt^r \quad \textrm{such that} \quad \lbrace r \in \R : \puis{x}_r \neq 0 \rbrace \textrm{ is a well ordered subset of } \R\enspace ,
\]
in which case the quantity
\[
\val(\puis{x}) := -\min \lbrace r \in \R : \puis{x}_r \neq 0 \rbrace
\]
is well-defined and yields a valuation of $\hahnseries{\C}{\R}$, which is surjective in this case, \textit{i.e.} $\vgroup = \R$.
\end{expl}

In the context of tropical algebra, the algebraic structure we typically use for calculations is not a ring, as we are used to, but a semiring. The definition of a semiring is the following.

\begin{dfn}
A \textit{semiring} $(R,+,\cdot,0,1)$ is a set $R$ equipped with addition $+$ and multiplication $\cdot$ such that
\begin{enumerate}[label=\it (\roman*)]
\item $(R,+)$ is a commutative monoid with identity element $0$ called the \textit{zero element};
\item $(R,\cdot)$ is a monoid with identity element $1$ called the \textit{unit element};
\item the multiplication is distributive over the addition;
\item $0$ is the absorbing element for the multiplication.
\end{enumerate}

If moreover, the multiplication is commutative, then $R$ is called a \textit{commutative semiring}, and if every nonzero element has an inverse for the multiplication, then $R$ is called a \textit{semifield}.
\end{dfn}

Note that a semiring $R$ satisfy all the properties of a ring except one: the existence of an opposite element for the addition is not guaranteed. In particular, a ring is a semiring. There are many interesting examples of semirings besides rings, such as the set $\N$ of nonnegative integers, endowed with the usual arithmetic operations, the sets of ideals of a ring, with ideal addition and multiplication, or any Boolean algebra, with $\vee$ as addition and $\wedge$ as multiplication. However, in this paper, we will be focusing on the following semiring.

\begin{dfn}
The \textit{tropical} (or \textit{max-plus}) \textit{semiring} is the semiring $(\R \cup \lbrace -\infty \rbrace, \tplus, \tdot, \zero, \unit)$ with addition $\tplus = \max$, multiplication $\tdot = +$, zero element $\zero = -\infty$ and unit element $\unit = 0$. The operations $\tplus$ and $\tdot$ are respectively refered to as the \textit{tropical addition} and the \textit{tropical multiplication}, and the tropical semiring is denoted by $\T$.
\end{dfn}

\begin{rmk}
In fact, we can similarly define a semiring structure for any totally ordered abelian group with bottom element $\Gamma \cup \lbrace \bot \rbrace$, but in the context of this paper, we will focus on the case where $\Gamma = \R$ once again. Moreover, note that $\T$ is not only a semiring but actually a semifield.
\end{rmk}

The definition of a formal polynomial over a semiring is identical to the usual definition of a formal polynomial over a ring.

\begin{dfn}
A \textit{formal Laurent polynomial} $f$ over a semiring --- and \textit{a fortiori} over a ring --- $(R,+,\cdot,0,1)$ in $n$ variables is a map
\[
\begin{array}{rcl}
\Z^n & \longrightarrow & R\\
\alpha & \longmapsto & \puis{f}_\alpha
\end{array}
\]
such that $\puis{f}_\alpha = 0$ for all $\alpha \in \Z^n$ but a finite number. If $\alpha = (\alpha_1,\ldots,\alpha_n)$, then we use the following notation:
\[
\puis{f} = \sum_{\alpha \in \Z^n} \puis{f}_\alpha X^\alpha = \sum_{\alpha \in \Z^n} \puis{f}_\alpha X_1^{\alpha_1} \cdots X_n^{\alpha_n}
\]
and we denote by $R[X_1^{\pm 1},\ldots,X_n^{\pm 1}]$ the set of formal Laurent polynomials in $n$ variables over the semiring $R$.

The \textit{support} $\supp(\puis{f})$ of a formal Laurent polynomial $\puis{f} \in R[X_1^{\pm 1},\ldots,X_n^{\pm 1}]$ is the set
\[
\lbrace \alpha \in \Z^n : \puis{f}_\alpha \neq 0 \rbrace \enspace .
\]

Finally, the \textit{polynomial function} associated to a formal Laurent polynomial $\puis{f} \in R[X_1^{\pm 1},\ldots,X_n^{\pm 1}]$ is the function
\[
\begin{array}{rcl}
(R^{\times})^n & \longrightarrow & R\\
\puis{x} & \longmapsto & \sum_{\alpha \in \Z^n} \puis{f}_\alpha x^\alpha \enspace ,
\end{array}
\]
where $R^{\times}$ denotes the multiplicative group of invertible elements of $R$ and $\puis{x}^\alpha = \puis{x}_1^{\alpha_1} \times \cdots \times \puis{x}_n^{\alpha_n}$ if $\puis{x} = (\puis{x}_1,\ldots,\puis{x}_n)$ and $\alpha = (\alpha_1,\ldots,\alpha_n)$. More generally, we call polynomial function any function of the previous form.\\

In particular, in the case where the semiring $R$ corresponds to the tropical semiring, then a formal Laurent polynomial over $R$ will be called a \textit{formal tropical Laurent polynomial}, and its associated polynomial function a \textit{tropical polynomial function}.
\end{dfn}

In the remainder of this paper, we will refer to formal Laurent polynomials as \textit{formal polynomials}, or even simply as \textit{polynomials} if the context is unambiguous, and a (formal Laurent) polynomial $f$ with support included in $\N^n$ will be refered to as an \textit{ordinary polynomial} whenever we need to highlight the fact that all the exponents belong in the positive orthant, and we denote by $R[X_1,\ldots,X_n]$ the set of formal ordinary polynomials in $n$ variables over a semiring $R$.

\begin{rmk}
If $f$ is a tropical polynomial in $n$ variables, then the tropical polynomial function associated to $f$ is the function
\[
\begin{array}{rcl}
\R^n & \longrightarrow & \T\\
x & \longmapsto & \tsum_{\alpha \in \Z^n} f_\alpha \tdot x^{\tdot \alpha} = \max_{\alpha \in \Z^n} \left( f_\alpha + \langle x, \alpha \rangle \right) \enspace ,
\end{array}
\]
where $x^{\tdot \alpha} := x_1^{\tdot \alpha_1} \tdot \cdots \tdot x_n^{\tdot \alpha_n} = \langle x, \alpha \rangle$ denotes the usual scalar product of $x = (x_1,\ldots,x_n)$ and $\alpha = (\alpha_1,\ldots,\alpha_n)$ in $\R^n$.
\end{rmk}

Note that while in the case of an infinite field, there is a one-to-one correspondance between polynomial functions and formal polynomials, this is not the case in the tropical setting, where two distinct formal tropical polynomial can have the same tropical polynomial function. Therefore, we will always be explicit
about the nature of the tropical objects we manipulate.

Now we can discuss the notion of root. A \textit{root} of a polynomial $\puis{f}$ over a ring $R$ is an element $\puis{x}$ of $R$ such that evaluating the polynomial function of $\puis{f}$ at $\puis{x}$ gives the zero element. This definition, however, is not suited for polynomials over semirings, and in particular for tropical polynomials, for which we need to give another definition of a root.

\begin{dfn}
Let $f = \tsum_{\alpha \in \Z^n} f_\alpha X^\alpha \in \T[X_1^\pm,\ldots,X_n^\pm]$ be a tropical polynomial. An element $x = (x_1,\ldots,x_n)$ of $\R^n$ is called a (\textit{tropical}) \textit{root} of $f$ whenever the maximum in the expression
\[
\max_{\alpha \in \Z^n} (f_\alpha + \langle x, \alpha \rangle )
\]
is attained for at least two distinct values of $\alpha \in \Z^n$. This is denoted\footnote{This notation is not only used to convey the fact that $f(x) \bal \zero$ is the tropical analogue of the equation $\puis{f}(\puis{x}) = 0$ in a ring, but it also has a deeper meaning as it comes from a natural binary relation that arises in extensions of the tropical semifield. For more details, see for instance Section 4. of \cite{AGG08}.} as $f(x) \bal \zero$.

In the case where $f$ is an ordinary tropical polynomial, then a root $x$ of $f$ is an element $\T^n$ (instead of $\R^n$) satisfying the same property, and in this case the \textit{support} of the root is the set of indices whose associated coefficient is not equal to $\zero$. 
\end{dfn}

Finally, note that for a finite collection $\puis{f} = (\puis{f}_1, \ldots, \puis{f}_k)$ of Laurent polynomials, you can always find a monomial $X^{\alpha}$ such that for all $1 \leq i \leq k$, the polynomial $X^{\alpha} \puis{f}_i$ is an ordinary polynomial. Therefore, we could in most cases only consider systems of ordinary polynomials.

Now, we recall a few definitions from \cite{MS15} regarding the notion of tropicalization.

\begin{dfn}
Let $\vfield$ be a field with valuation $\val$, and consider a Laurent polynomial $\puis{f} = \sum_{\alpha \in \Z^n} \puis{f}_{\alpha} X^{\alpha} \in \vfield[X_1^{\pm 1},\ldots,X_n^{\pm 1}]$. Then the \textit{hypersurface} (or \textit{variety}) associated to $\puis{f}$ is the set
\[
\var_{\vfield}(\puis{f}) := \left\lbrace \puis{x} \in (\vfield^{\ast})^n : \puis{f}(\puis{x}) = 0 \right\rbrace \enspace .
\]

Similarly, consider a tropical Laurent polynomial $f \in \T[X_1^{\pm 1}, \ldots, X_n^{\pm 1}]$. Then the \textit{tropical hypersurface} (or \textit{tropical variety}) associated to the tropical polynomial $f$ is the set
\[
\tvar(f) := \left\lbrace x \in \R^n, f(x) \bal \zero \right\rbrace \enspace ,
\]
\end{dfn}

\begin{dfn}
Let $\vfield$ be a field with valuation $\val$, and consider a Laurent polynomial $\puis{f} = \sum_{\alpha \in \Z^n} \puis{f}_{\alpha} X^{\alpha} \in \vfield[X_1^{\pm 1},\ldots,X_n^{\pm 1}]$.
Then:
\begin{itemize}[label=$\diamond$]
\item the \textit{tropicalization} of $\puis{f}$ is the tropical polynomial function $\trop(\puis{f})$ defined by
\[
\trop(\puis{f}) : \left\lbrace\begin{array}{rcl}
\R^n & \longrightarrow & \T\\
x & \longmapsto & \tsum_{\alpha \in \Z^n} \val(\puis{f}_{\alpha}) \tdot x^{\tdot \alpha} = \max_{\alpha \in \Z^n} (\val(\puis{f}_{\alpha}) + \langle x, \alpha \rangle) \enspace ;\end{array}\right.
\]
\item the \textit{tropicalization} of $\var_{\vfield}(\puis{f})$ or the \textit{tropical hypersurface} associated to the (ordinary) polynomial $\puis{f}$ is the set $\tropi_{\vfield}(\puis{f})$ defined by
\[
\tropi_{\vfield}(\puis{f}) = \tvar(\trop(\puis{f})) \enspace .
\]
\textit{i.e.} it corresponds to the set of points $x$ in $\R^n$ such that the maximum in the expression $\trop(\puis{f})(x)$ is achieved at least twice.
\end{itemize}
\end{dfn}

We can then define the tropicalization of any variety of $(\vfield^{\ast})^n$ as follows.

\begin{dfn}
Let $I$ be an ideal of $\vfield[X_1^{\pm 1},\ldots,X_n^{\pm 1}]$, and $\var_{\vfield}(I)$ the variety it defines in $(\vfield^{\ast})^n$. Then the \textit{tropicalization} of the variety $\var_{\vfield}(I)$, or the \textit{tropical variety} associated to the ideal $I$ is the subset $\tropi_{\vfield}(I)$ of $\R^n$ defined by
\begin{align}
\tropi_{\vfield}(I) = \bigcap_{\puis{f} \in I} \tropi_{\vfield}(\puis{f}) \enspace .\label{e-finite}
\end{align}
More generally, we call \textit{tropical variety} in $\R^n$ any subset of the previous form.
\end{dfn}

In the case where $\vfield$ has a nontrivial valuation, then there is an equivalent way to express $\tropi_{\vfield}(I)$.

\begin{thm}[Fundamental theorem of Tropical Algebraic Geometry, see {\cite[Theorem~3.2.3]{MS15}.}]\label{th-fund}
Let $\vfield$ be an algebraically closed field endowed with a nontrivial valuation $\val$ and let $I$ be an ideal in $\vfield[X_1^{\pm 1},\ldots,X_n^{\pm 1}]$. Then the following two subsets of $\R^n$ coincide:
\begin{enumerate}[label=(\roman*)]
\item the tropical variety $\tropi_{\vfield}(I)$;
\item the closure of the set of coordinatewise valuations of points of $\var_{\vfield}(I)$, \textit{i.e.} of the set
\[
\val(\var_{\vfield}(I)) = \lbrace (\val(\puis{x}_1),\ldots,\val(\puis{x}_n)) \in \R^n : (\puis{x}_1,\ldots,\puis{x}_n) \in \var_{\vfield}(I)\rbrace \enspace .
\]
\end{enumerate}
\end{thm}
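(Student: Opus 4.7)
The plan is to prove the two inclusions separately, following the classical approach.

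The inclusion $\val(\var_{\vfield}(I)) \subseteq \tropi_{\vfield}(I)$ (before taking closure) is a direct consequence of the ultrametric inequality. Take $\puis{x} \in \var_{\vfield}(I)$ and set $w = \val(\puis{x})$ (coordinatewise). For any $\puis{f} = \sum_\alpha \puis{f}_\alpha X^\alpha \in I$, suppose for contradiction that the maximum in
\[
\trop(\puis{f})(w) = \max_{\alpha \in \Z^n} \bigl(\val(\puis{f}_\alpha) + \langle w, \alpha\rangle\bigr)
\]
is attained at a unique index $\alpha^*$. Iterating axiom \textit{(iii)} of \Cref{def:val} then yields $\val(\puis{f}(\puis{x})) = \val(\puis{f}_{\alpha^*}) + \langle w, \alpha^*\rangle \neq \bot$, contradicting $\puis{f}(\puis{x}) = 0$. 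Hence $w \in \tropi_{\vfield}(\puis{f})$ for every $\puis{f} \in I$, so $w \in \tropi_{\vfield}(I)$. Since $\tropi_{\vfield}(I)$ is closed as an intersection of closed tropical hypersurfaces, taking closure on the left preserves the inclusion.

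For the reverse inclusion $\tropi_{\vfield}(I) \subseteq \overline{\val(\var_{\vfield}(I))}$, I would proceed in three reductions. First, since $\vfield$ is algebraically closed with nontrivial valuation, the value group $\vgroup$ is divisible and dense in $\R$; combined with closedness of the target set, this reduces the task to producing, for each $w \in \vgroup^n \cap \tropi_{\vfield}(I)$, a point $\puis{x} \in \var_{\vfield}(I)$ with $\val(\puis{x}) = w$. Second, choose $\puis{t} = (\puis{t}_1,\ldots,\puis{t}_n) \in (\vfield^*)^n$ with $\val(\puis{t}_i) = w_i$; the monomial substitution $X_i \mapsto \puis{t}_i Y_i$ sends $I$ to an ideal $J$ and reduces the problem to showing $\vzero \in \tropi_{\vfield}(J)$ implies $\unit \in \val(\var_{\vfield}(J))$. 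Third, form the \emph{initial ideal} $\init_\vzero(J) \subset \resfield[Y_1^{\pm 1}, \ldots, Y_n^{\pm 1}]$ generated by the residues of the maximal-valuation parts of elements of $J$; a standard lemma asserts that $\vzero \in \tropi_{\vfield}(J)$ if and only if $\init_\vzero(J)$ contains no monomial, and in that case Hilbert's Nullstellensatz over the algebraically closed residue field $\resfield$ yields a common root $\bar{\puis{y}} \in (\resfield^*)^n$.

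The main obstacle is the \emph{lifting step}: one must lift $\bar{\puis{y}}$ to a point $\puis{y} \in \var_{\vfield}(J) \cap (\vring^*)^n$, for then $\puis{x} = \puis{t}\cdot\puis{y}$ will satisfy $\val(\puis{x}) = w$ and $\puis{x} \in \var_{\vfield}(I)$. The difficulty is that $\bar{\puis{y}}$ only annihilates the initial forms, not the elements of $J$ themselves, so a naive Hensel-type argument is insufficient in general. The standard remedy---which I would invoke from \cite[Theorem~3.2.3]{MS15} rather than reprove---is to pass if necessary to a large enough algebraically closed valued extension of $\vfield$, and combine a Noether normalization of $\var_{\vfield}(J)$ with a transcendence-based specialization argument to produce the desired lift. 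Unwinding the substitution then completes the proof.
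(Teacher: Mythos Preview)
The paper does not prove this theorem: it is quoted as background from \cite[Theorem~3.2.3]{MS15} and then used to motivate the tropical Nullstellensatz, so there is no proof in the paper to compare against. Your outline is essentially the standard argument from that reference---the easy inclusion via the ultrametric inequality, reduction to $w\in\vgroup^n$ by density of the divisible value group, reduction to $w=\vzero$ by monomial rescaling, and the passage through the initial ideal $\init_{\vzero}(J)$ over the residue field $\resfield$---and you are right that the lifting step is the genuine content; deferring it explicitly to \cite{MS15} is exactly what the paper itself does.
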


\Cref{th-fund} motivates the present work. Indeed, for any finite family of polynomials $\puis{f}_1,\dots,\puis{f}_k$ in the ideal $I$, checking that $\bigcap_{j\in[k]} \tropi_{\vfield}(\puis{f}_k) = \emptyset$, which can be done by
applying the present tropical Nullstellensatz, entails that
$\val(\var_{\vfield}(I))=\emptyset$. Hence, the tropical Nullstellensatz
provides a certificate of emptyness for an algebraic variety over a valued
field. Moreover, it is shown in~\cite{MS15} that the intersection~\eqref{e-finite} is achieved
by considering only a finite subintersection, which entails that the collection
of certificates obtained in this way is complete.

\section{The Sparse Tropical Nullstellensatz}\label{sec:nullstellensatz}

\subsection{Statement of the theorem}

The idea of the main theorem in this section is to reduce the problem of the existence of a solution to a system of polynomials equations to the existence of a solution to a system of tropical linear equations arising from a certain matrix called the Macaulay matrix, which can be constructed using the coefficients of the polynomials $f_1, \ldots, f_k$. From now on, we denote by $f$ the collection $(f_1, \ldots, f_k)$ of polynomials, and by $f \bal \zero$ the system
\[
\tsum_{\alpha \in \calA_i} f_{i,\alpha} \tdot x^{\tdot \alpha} \bal \zero \quad \textrm{for all} \quad 1 \leq i \leq k
\]
of tropical polynomial equations with unknown $x \in \R^n$.

We start by giving a proper setting to talk about tropical linear equations. We call \textit{tropical matrix} a matrix with coefficients in $\T$. For two integers $p, q \in \Npos$, the set of $p \times q$ tropical matrices is denoted by $\T^{p \times q}$. We can define tropical addition $\tplus$ and multiplication $\tdot$ on tropical matrices by replacing the usual operations by their tropical version in the definition of the usual matrix operations. This notably gives a semiring structure to the set $\T^{p \times p}$.

Particularly, for $A = (a_{ij})_{(i, j) \in [p] \times [q]} \in \T^{p \times q}$ and $y = (y_j)_{j \in [q]} \in \T^q$, one has
\begin{equation}\label{eq:tropical-matrix-vector-product}
A \tdot y = \left( \max_{1 \leq j \leq q} a_{ij} + y_j \right)_{i \in [p]} \enspace .
\end{equation}

\begin{dfn}
Let $A$ be a $p \times q$ tropical matrix and let $y \in \T^q$. Then we write that $A \tdot y \bal \zero$ whenever the maximum is attained twice for every coordinate in the righthandside of \eqref{eq:tropical-matrix-vector-product}. The set of vectors $y \in \T^q$ such that $A \tdot y \bal \zero$ is called the \textit{tropical right null space} or \textit{kernel} of the matrix $A$. Moreover, we set by convention that all vectors $y \in \T^q$ are in the tropical kernel of a $0 \times m$ matrix.
\end{dfn}

\begin{rmk}
Note that as in the usual case, the tropical matrix equation $A \tdot y \bal \zero$ can be written as the following $q$-variate tropical polynomial --- linear in fact --- system
\[
\forall i \in [p], \quad \tsum_{j = 1}^q a_{ij} \tdot y_j \bal \zero \enspace .
\]
\end{rmk}

Now, we define the tropical Macaulay matrix associated to the system $f$, which plays a crucial role in the determination of the solvability of a polynomial system, with no restriction on the number of equations.

\begin{dfn}
Given a collection of tropical polynomials $f = (f_1, \ldots, f_k)$, we define the \textit{tropical Macaulay matrix} $\mac$ of the system as such: the rows of $\mac$ are indexed by pairs $(i,\alpha)$ where $1 \leq i \leq k$ and $\alpha \in \Z^n$, the columns of $\mac$ are indexed by integer vectors $\beta \in \Z^n$, and for given $(i,\alpha)$ and $\beta$, we set the entry $\mac_{(i,\alpha),\beta}$ of $\mac$ equal to the coefficient of the monomial $X^{\beta}$ in the polynomial $X^{\alpha}f_i(X)$, or $-\infty$ if no such monomial exists.

Given a Macaulay matrix $\mac$ as above, a {\em nonempty} finite subset $\calE$ of $\Z^n$, and a collection $\calA = (\calA_1, \ldots, \calA_k)$ of subsets of $\Z^n$, 
we denote by $\mac_{\calE}^{\calA}$ the submatrix of $\mac$ consisting only of the columns with indices $\beta \in \calE$, and the rows indexed by pairs $(i,\alpha)$ where $1 \leq i \leq k$ and $\alpha \in \Z^n$ such that $\alpha+\calA_i \subseteq \calE$. When the polynomials $f_i$ have their support equal to $\calA_i$, 
and $\mac$ is associated to $f= (f_1, \ldots, f_k)$, we 
simply write $\mac_{\calE}$ instead of $\mac_{\calE}^{\calA}$.
\end{dfn}

\begin{rmk}
Note that, equivalently, $\mac_{\calE}$ is the submatrix of $\mac$ consisting of the columns with indices $\beta \in \calE$, and the rows that have all their finite entries in these columns. Moreover, if $\calE$ is nonempty but too small, it might be possible that there are no such row of $\mac$, and thus the set of rows of $\mac_{\calE}^{\calA}$ might be empty, in which case by the convention of the previous definition, all vectors are considered to be in the tropical kernel of $\mac_{\calE}^{\calA}$.
\end{rmk}

Now, let us denote as previously by $\calA = (\calA_1, \ldots, \calA_k)$ a collection of subsets of $\Z^n$, and for all $1 \leq i \leq k$, let $Q_i$ be the convex hull of $\calA_i$ and set $Q := Q_1 + \cdots + Q_k$. Take a generic vector $\delta \in V + \Z^n$ where $V \subseteq \R^n$ is the vector space directing the affine hull of $Q$, and consider the set
\[
\calE := (Q + \delta) \cap \Z^n \enspace .
\]
We will refer to sets of this form as \textit{Canny-Emiris subsets} of $\Z^n$ 
associated to the collection $\calA$. Note that for $\delta$ small enough, we always have the inclusion
\[
\relint(Q) \cap \Z^n \subseteq \calE \subseteq Q \cap \Z^n\enspace ,
\]
where $\relint$ denotes the relative interior.

Now, for a collection $f = (f_1, \ldots, f_k)$ of tropical polynomials, we shall consider in particular the collection $\calA = (\calA_1, \ldots, \calA_k)$ where $\calA_i$ is the support of $f_i$ for all $i \in [k]$. In that case, the set $Q_i$ corresponds to the Newton polytope $\NP_{f_i}$ of $f_i$, and we shall also refer to $Q$ as the \textit{Newton polytope} of $f$. Also the Canny-Emiris subsets $\calE$ associated to the collection $\calA$ of supports are refered to as the \textit{Canny-Emiris sets associated to} $f$.

The tropical linear system $\mac_{\calE} \tdot y \bal \zero$ will be of interest to us in the resolution of the previous problem. More precisely, we have the following result, which will be proven in \Cref{sec:proof-nullstellensatz}.

\begin{thm}[Sparse tropical Nullstellensatz] 
\label{thm:nullstellensatz}
There exists a common root $x \in \R^n$ to the system $f(x) \bal \zero$ if and only if there exists a vector $y \in \R^{\mathcal{E'}}$ in the tropical kernel of the submatrix $\mac_{\mathcal{E'}}$ of the Macaulay matrix $\mac$ associated to the collection $f$ --- \emph{i.e.} $\mac_{\mathcal{E'}} \tdot y \bal \zero$ --- where $\mathcal{E'}$ is any subset of $\Z^n$ containing a nonempty Canny-Emiris subset $\calE$ of $\Z^n$ associated to $f$.

Moreover, if $\calE'=\calE$, these conditions are equivalent to the existence of a vector $y \in \T^{\calE} \setminus \{ \zero \}$ such that $\mac_{\calE} \odot y \bal \zero$.
\end{thm}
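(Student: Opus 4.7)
My plan is to establish the two implications separately and then deduce the equivalence with the existence of $y \in \T^\calE \setminus \{\zero\}$.

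For the \emph{only if} direction, given a common root $x \in \R^n$, the natural guess is to set $y_\beta := \langle x, \beta\rangle$ for $\beta \in \calE'$. For each row $(i, \alpha)$ of $\mac_{\calE'}$, the defining constraint $\alpha + \calA_i \subseteq \calE'$ ensures that the $(i,\alpha)$-th coordinate of $\mac_{\calE'} \tdot y$ equals $\max_{\gamma \in \calA_i}(f_{i,\gamma} + \langle x, \alpha+\gamma\rangle) = \langle x, \alpha\rangle + \max_{\gamma \in \calA_i}(f_{i,\gamma} + \langle x, \gamma\rangle)$. The hypothesis $f_i(x) \bal \zero$ says this last maximum is attained at two distinct values of $\gamma$, hence at two distinct $\beta = \alpha + \gamma$, and $y$ lies in the tropical kernel.

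For the \emph{if} direction, I would first reduce from an arbitrary $\calE' \supseteq \calE$ to $\calE' = \calE$: if $y \in \R^{\calE'}$ lies in the kernel of $\mac_{\calE'}$, then the restriction $y|_\calE$ lies in the kernel of $\mac_\calE$, because every row $(i,\alpha)$ of $\mac_\calE$ also appears in $\mac_{\calE'}$ and has all its finite entries in columns indexed by $\alpha + \calA_i \subseteq \calE$, so the row maxima — and their multiplicity — depend only on $y|_\calE$. The core task is then to construct $x$ from a finite kernel vector $y \in \R^\calE$ of $\mac_\calE$. Here I would exploit the combinatorial structure of the Canny--Emiris set: genericity of $\delta$ provides a tight coherent mixed subdivision of $Q + \delta$, and each $\beta \in \calE$ lies in a unique cell, which assigns it a canonical \emph{row content} $(i_\beta, \alpha_\beta)$ of $\mac_\calE$. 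Inside a mixed cell, the kernel equations at the associated rows force the values of $y$ on lattice points of that cell to behave affinely with a common slope, and this slope is the candidate $x$. The kernel condition at any row $(i, \alpha)$ of $\mac_\calE$ then produces two distinct $\gamma, \gamma' \in \calA_i$ achieving the maximum of $f_{i,\gamma} + \langle x, \alpha + \gamma\rangle$; cancelling the $\langle x, \alpha\rangle$ shift yields $f_i(x) \bal \zero$.

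The main obstacle will be the global consistency of $x$: the slope extracted from one mixed cell must agree with that extracted from any other, despite $\calE$ carrying many cells. This is precisely where tightness of the mixed subdivision and genericity of $\delta$ become essential, and I expect the crux of the argument (likely packaged as a combinatorial lemma on Canny--Emiris sets, in the spirit of Sturmfels' refinement of the Canny--Emiris construction) to be a careful propagation across adjacent cells that share lattice directions. Finally, for the equivalence with the existence of $y \in \T^\calE \setminus \{\zero\}$, one direction is immediate, since any finite $y$ is nonzero. For the converse, given such a $y$ in the kernel of $\mac_\calE$, I would replace each of its $-\infty$ coordinates by a common value $-M$ with $M$ sufficiently large; because $\calE$ is finite, the row maxima of $\mac_\calE \tdot y$ are finite (thanks to $y \neq \zero$), and for $M$ large enough the replaced entries contribute strictly less than these maxima, preserving both the value and the double attainment of the maxima. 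This produces a finite kernel vector in $\R^\calE$ and closes the loop.
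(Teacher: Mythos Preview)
Your \emph{only if} direction and the reduction from $\calE'$ to $\calE$ are correct, but the \emph{if} direction has a genuine gap. You attempt to extract a root $x$ directly from a finite kernel vector $y\in\R^{\calE}$ by reading off ``the slope'' of $y$ on mixed cells, but finite kernel vectors need not encode a single root: if the system has two distinct roots $x^{(1)},x^{(2)}$, then $y:=\ver(x^{(1)})\tplus\ver(x^{(2)})$ is a finite kernel vector (tropical kernels are closed under $\tplus$), yet $y_\beta=\max(\<x^{(1)},\beta>,\<x^{(2)},\beta>)$ is genuinely piecewise linear in $\beta$, and any slope-extraction will return $x^{(1)}$ on some cells and $x^{(2)}$ on others. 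So the consistency obstacle you flag is not a technicality to be overcome but a symptom that the map $y\mapsto x$ you envision does not exist. There is also a mismatch in your use of the row content: with the lifting given by the coefficient maps of the $f_i$, the row content $(j,a_j)$ of $p\in\calE$ is well-defined precisely because some $\cell(x,h_j)$ is a singleton, i.e.\ because $x$ is \emph{not} a root of $f_j$; the construction is thus only available under the no-root hypothesis and cannot drive a direct proof. The paper accordingly argues the contrapositive: assuming no common root, the row contents assemble a square submatrix $\mac_{\calE\calE}$ of $\mac_{\calE}$ which, after scaling column $p'$ by $-h(p'-\delta)$, is strictly tropically diagonally dominant, hence tropically nonsingular, so $\mac_{\calE'}\tdot y\bal\zero$ has no solution $y\in\R^{\calE'}$.

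Your argument for the last equivalence is also incorrect. The claim that $y\neq\zero$ forces every row maximum of $\mac_{\calE}\tdot y$ to be finite fails: row $(i,\alpha)$ has its finite entries confined to the columns $\alpha+\calA_i$, and $y$ may well be $-\infty$ on all of those while finite elsewhere. After your replacement $-\infty\to -M$, that row's maximum becomes $\max_{\gamma\in\calA_i}f_{i,\gamma}-M$, attained uniquely whenever $f_i$ has a unique largest coefficient, so the kernel condition is destroyed. The paper's contrapositive handles this for free: tropical nonsingularity of $\mac_{\calE\calE}$ already says that the only $y\in\T^{\calE}$ with $\mac_{\calE}\tdot y\bal\zero$ is $y=\zero$, which is exactly the stronger statement claimed.
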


When we have no particular information on the supports of the polynomials $f_i$ besides that they are ordinary tropical polynomials  with respective degrees $d_i$, we denote by $\mac_N$ the submatrix $\mac_{\overline{\calE}}^{\calA}$ of $\mac$
with $\overline{\calE} = N\Delta \cap \N^n$, and $\calA_i = d_i\Delta \cap \N^n$, where
\[
\Delta := \{ \alpha \in \Rnonneg^n : \vert \alpha \vert = \alpha_1 + \cdots + \alpha_n \leq 1 \}
\]
denotes the unit simplex. In this case, the integer $N$ is called the \textit{truncation degree} of the Macaulay submatrix $\mac_N$.

More generally, if it is only known, for all $i=1,\ldots, k$, that the support of $f_i$ is included in $\calA_i$ (which plays now the role of an \emph{a priori} support), then one shall consider a bigger set
\[
\overline{\calE} := Q  \cap \Z^n \enspace ,
\]
where $Q  = \conv(\calA_1) + \cdots + \conv(\calA_k)$ is defined using the collection $\calA = (\calA_1, \ldots, \calA_k)$. Recall that for $\delta$ small enough $\calE \subseteq \overline{\calE}$. In that case, one has the following theorem, in which we consider the matrices $\mac_{\calE'}^{\calA}$ with $\calE' \supseteq \overline{\calE}$.

\begin{thm}[Nullstellensatz for sparse Tropical Polynomial Systems with \emph{a priori} supports]
\label{thm:null2}
There exists a common root $x \in \R^n$ to the system $f(x) \bal \zero$ if and only if there exists a vector $y \in \R^{\mathcal{E'}}$ in the tropical kernel of the submatrix $\mac^{\calA}_{\mathcal{E'}}$ of the Macaulay matrix $\mac$ associated to the collection $f$, where $\mathcal{E'}$ is any subset of $\Z^n$ containing $\overline{\calE}$ and $\calA$ is a collection of a priori supports of the $f_i$.

Moreover, when the Newton polytope of $f$ has the same dimension as $Q$, one can replace $\overline{\calE}$ by any nonempty Canny-Emiris set $\calE$ associated to $\calA$.
\end{thm}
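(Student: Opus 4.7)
The proof follows the template of \Cref{thm:nullstellensatz}, modified to accommodate the \emph{a priori} supports.

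The easy direction (root $\Rightarrow$ kernel vector) is identical to the one for \Cref{thm:nullstellensatz}. Given a root $x \in \R^n$ of $f$, set $y_\beta := \langle x,\beta\rangle$ for $\beta \in \calE'$. For each row $(i,\alpha)$ of $\mac^{\calA}_{\calE'}$ (so $\alpha + \calA_i \subseteq \calE'$), the finite entries sit at columns $\beta = \alpha + \gamma$ with $\gamma \in \supp(f_i) \subseteq \calA_i$, giving
\[
\max_{\beta \in \calE'}\bigl(\mac^{\calA}_{(i,\alpha),\beta} + y_\beta\bigr) = \langle x,\alpha\rangle + f_i(x),
\]
which is attained at least twice because $f_i(x) \bal \zero$. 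Hence $y$ is in the tropical kernel of $\mac^{\calA}_{\calE'}$.

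The converse direction (kernel $\Rightarrow$ root) reduces to \Cref{thm:nullstellensatz} via a perturbation extending the actual supports of the $f_i$ to their a priori supports $\calA_i$. Given $y \in \R^{\calE'}$ with $\mac^{\calA}_{\calE'} \tdot y \bal \zero$, define
\[
\tilde f_i := f_i \tplus \tsum_{\gamma \in \calA_i \setminus \supp(f_i)} c \tdot X^{\gamma},
\]
for a very negative real $c$ (or, more cleanly, $c = -\tau$ for an infinite element $\tau$ in a non-Archimedean extension of $\R$). Then $\supp(\tilde f_i) = \calA_i$, $\NP(\tilde f) = Q$, and $\mac^{\calA}_{\calE'}(\tilde f) = \mac_{\calE'}(\tilde f)$. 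For $c$ sufficiently negative, $y$ also lies in the tropical kernel of $\mac_{\calE'}(\tilde f)$: in each row, the row-max is already attained at least twice by the $\supp(f_i)$-entries of $f$, and the new entries $c + y_\beta$ remain strictly below it. \Cref{thm:nullstellensatz} applied to $\tilde f$---whose Newton polytope is $Q$, so that $\calE' \supseteq \overline{\calE}$ contains a Canny-Emiris set for $\tilde f$---then produces a root $\tilde x_c \in \R^n$ of $\tilde f$. Letting $c \to -\infty$, the perturbed monomials become negligible; equivalently, taking the real part of the non-Archimedean solution and invoking the transfer principle for divisible ordered abelian groups cited in the introduction yields a root of $f$ in $\R^n$.

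For the \emph{moreover} clause, when $\dim \NP(f) = \dim Q$ the directions of $\aff \NP(f)$ and $\aff Q$ coincide, so a single generic $\delta \in V + \Z^n$ simultaneously realizes a Canny-Emiris set $\calE = (Q + \delta) \cap \Z^n$ associated to $\calA$ and a Canny-Emiris set $\calE_0 = (\NP(f) + \delta) \cap \Z^n \subseteq \calE$ associated to $f$. In particular, $\calE$ still contains a Canny-Emiris set for the perturbed system $\tilde f$ (whose Newton polytope is $Q$), so the reduction above applies with $\calE$ in place of $\overline{\calE}$.

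The main obstacle lies in the limit step of the converse direction: controlling the perturbed roots $\tilde x_c$ as $c \to -\infty$ and ensuring that a root of $f$ is genuinely extracted in $\R^n$, rather than only a root escaping to infinity. The non-Archimedean formulation of the perturbation, paired with the transfer principle, is the cleanest way to sidestep the boundedness analysis.
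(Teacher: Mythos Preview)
The limit/transfer step is a genuine gap that the non-Archimedean device does not close. Knowing that $\tilde f_c$ has a root in $\R^n$ for every sufficiently negative $c$ does \emph{not} by itself imply that $f$ has one. Take $n=2$, $f_1=0\tplus 0X_1$, $f_2=0\tplus 1X_1$, with a priori supports $\calA_1=\{0,e_1,e_2\}$, $\calA_2=\{0,e_1\}$. Then $f$ has no common root, yet $\tilde f_c$ has the unique common root $(-1,-c)$ for every $c<0$, which escapes to infinity as $c\to-\infty$. Over a non-Archimedean extension $\Gamma\supsetneq\R$ with $c=-\tau$, the unique root $(-1,\tau)\in\Gamma^2$ of $\tilde f_{-\tau}$ has an infinite coordinate, so there is no ``real part'' to take, and in fact $f_1(-1,\tau)\nbal\zero$. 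Transfer does not help either: the parameter $-\tau$ lies outside $\R$, so ``$\tilde f_{-\tau}$ has a root'' is not a sentence with parameters in $\R$ and cannot be pulled back; the sentence you would need, ``$f$ has a root,'' is exactly what you have not established in $\Gamma$. (In this example the kernel of $\mac^{\calA}_{\overline\calE}$ is empty, so \Cref{thm:null2} is not contradicted --- but the example shows that the implication ``$\tilde f_c$ has a root for all small $c$ $\Rightarrow$ $f$ has a root'' is false as a standalone step, so your argument cannot pass through it without using the kernel hypothesis again in the limit, which you do not do.)

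The paper's route is entirely different and avoids any limit. It argues the contrapositive of the hard direction directly: when $f$ has no root, the Canny--Emiris construction of \Cref{sec:caem}, run with the \emph{actual} supports of $f$, produces a tropically nonsingular square block $\mac_{\calE_0\calE_0}$. The only thing to check is that each of its rows $(j,p-a_j)$ is already a row of $\mac^{\calA}_{\calE'}$, i.e.\ $(p-a_j)+\calA_j\subseteq\calE'$. But the construction gives $p-a_j=\delta_0+\sum_{l\ne j}q_l$ with $q_l\in\conv(\supp f_l)\subseteq\conv(\calA_l)$, so $(p-a_j)+\calA_j\subseteq\delta_0+Q$; since $Q$ is a lattice polytope, for $\delta_0$ small enough every lattice point of $\delta_0+Q$ lies in $Q\cap\Z^n=\overline\calE\subseteq\calE'$. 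Hence the nonsingular block sits inside $\mac^{\calA}_{\calE'}$ and forces its finite kernel to be empty. The ``moreover'' clause is handled by using the \emph{same} generic $\delta$ for both polytopes, which is legitimate precisely when $\dim\NP(f)=\dim Q$.
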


\begin{proof}
The inclusions $\calE \subseteq \calE'$ and $\supp(f_i) \subseteq \calA_i$ for all $1 \leq i \leq k$ imply that the matrix $\mac_\calE$ is a submatrix of the matrix $\mac^{\calA}_{\calE'}$. More precisely, if $(i, \alpha)$ is the index of a row of $\mac_{\calE}$, then this indicates that the support of polynomial $x^\alpha f_i$ is included in $\calE$, and therefore
\begin{align*}
\alpha \in \conv \left( \sum_{1 \leq j \neq i \leq n} \supp(f_j) \right) \subseteq \conv \left( \sum_{1 \leq j \neq i \leq n} \calA_j \right) \enspace ,
\end{align*}
which shows that $(i, \alpha)$ is also the index of a row of the matrix $\mac_{\calE'}^{\calA}$. The theorem then follows directly from \Cref{thm:nullstellensatz} and from the latter remark.
\end{proof}

When $f = (f_1, \ldots, f_k)$ is a collection of ordinary tropical polynomials $f_i$ with respective degree $d_i$ and the matrix $\mac_N$ is defined as above, applying \Cref{thm:null2} with $\calA_i = d_i\Delta \cap \N^n$, we deduce the following result.

\begin{cor}\label{cor:null}
There exists a common root $x \in \R^n$ to the system $f(x) \bal \zero$ if and only if there exists a vector $y\in \R^{\mathcal{E'}}$ such that $\mac_N \tdot y \bal \zero$, where ${\mathcal{E'}}={N\Delta\cap \N^n}$ and 
$N\geq  d_1 + \cdots + d_k$.
Moreover, when the Newton polytope of $f$ has full dimension, one can replace the above lower bound on $N$ by $N\geq  d_1 + \cdots + d_k-n$.
\end{cor}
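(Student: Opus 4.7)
My plan is to apply \Cref{thm:null2} with the \emph{a priori} supports chosen as $\calA_i := d_i\Delta \cap \N^n$. With this choice, $\conv(\calA_i) = d_i\Delta$, so the Minkowski sum $Q = \conv(\calA_1) + \cdots + \conv(\calA_k) = D\Delta$ where $D := d_1 + \cdots + d_k$; in particular, $\overline{\calE} = Q \cap \Z^n = D\Delta \cap \N^n$. Whenever $N \geq D$, we have $\mathcal{E'} = N\Delta \cap \N^n \supseteq \overline{\calE}$, and the first part of the corollary is immediate from the first assertion of \Cref{thm:null2}.

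For the sharper bound $N \geq D - n$, under the full-dimensionality hypothesis on the Newton polytope of $f$ (which therefore shares the dimension $n$ of $Q = D\Delta$), I would invoke the ``moreover'' clause of \Cref{thm:null2}: it suffices to exhibit a nonempty Canny-Emiris set $\calE$ associated to $\calA$ such that $\calE \subseteq N\Delta \cap \N^n$. My proposal is to use the explicit perturbation $\delta := -\unit + \epsilon$, where $\epsilon \in \Rpos^n$ has strictly positive generic components with $\sum_i \epsilon_i < 1$.

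A direct computation then shows that $\alpha \in (Q + \delta) \cap \Z^n$ precisely when $\alpha_i \geq -1 + \epsilon_i$ for every $i$ and $\sum_i \alpha_i \leq D - n + \sum_i \epsilon_i$; for integer $\alpha$ and the stated bounds on $\epsilon$, the first inequality reduces to $\alpha_i \geq 0$ and the second to $\vert \alpha \vert \leq D - n$. Hence $\calE = (D-n)\Delta \cap \N^n$, which is nonempty (it contains the origin) as soon as $D \geq n$, and is contained in $N\Delta \cap \N^n$ whenever $N \geq D - n$. The only subtle step is verifying that this specific $\delta$ is ``generic'' in the Canny-Emiris sense, but this amounts to avoiding only finitely many affine constraints on $\epsilon$, which can easily be arranged while preserving $\sum_i \epsilon_i < 1$.
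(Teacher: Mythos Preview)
Your proof is correct and follows essentially the same route as the paper's: both apply \Cref{thm:null2} with $\calA_i = d_i\Delta \cap \N^n$, and for the sharper bound both exhibit a Canny--Emiris set equal to $(D-n)\Delta \cap \N^n$ via a perturbation of the form $(-1+\varepsilon,\ldots,-1+\varepsilon)$ (your treatment is in fact slightly more careful about genericity and nonemptiness than the paper's). One notational caveat: in this paper $\unit$ denotes the tropical unit $0\in\R$, not the all-ones vector in $\R^n$, so your expression $\delta := -\unit + \epsilon$ would compile to $\delta = \epsilon$; write $\delta = (-1,\ldots,-1) + \epsilon$ instead.
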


\begin{proof}
This result is obtained by applying \Cref{thm:null2} with $\calA_i=d_i\Delta\cap \N^n$, and $\calE'=N\Delta\cap \N^n$, since $\overline{\calE} = (d_1+\cdots +d_k)\Delta\cap \N^n\subset \calE'$. Moreover, for the full-dimensional case, one can perturb the simplex $(d_1+\cdots +d_k)\Delta$ by a perturbation $(\varepsilon, \ldots, \varepsilon)$ for $\varepsilon > 0$ small enough and the resulting Canny-Emiris set is $(d_1+\cdots +d_k - n)\Delta \cap \N^n$.
\end{proof}

\begin{expl}\label{expl:system-nullstellensatz}
Let us illustrate \Cref{thm:nullstellensatz} with some explicit examples. Consider the following two systems:
\[
(\mathcal{S}_1) : \left\{\begin{array}{rcl}
f_1 &=& 1 \tplus 2x_1 \tplus 1x_2 \tplus 1x_1x_2\\
f_2 &=& 0 \tplus 0x_1 \tplus 1x_2\\
f_3 &=& 2x_1 \tplus 0x_2
\end{array}\right. \quad \textrm{and} \quad (\mathcal{S}_2) : \left\{\begin{array}{rcl}
f_1 &=& 1 \tplus 4x_1 \tplus 1x_2 \tplus 3x_1x_2\\
f_2 &=& 0 \tplus 0x_1 \tplus 1x_2\\
f_3 &=& 2x_1 \tplus 0x_2
\end{array}\right.
\]
Both systems have the same supports, and thus yield the same polytope $Q$.
\begin{figure}[h]
\centering
\begin{tikzpicture}[line cap=round,line join=round,>=triangle 45,x=1cm,y=1cm]
\clip(-1.5,-1.5) rectangle (10.5,4.5);
\foreach \x in {-1,...,10}
    \foreach \y in {-1,...,4}
    {
    \draw (\x,\y) circle (0.5pt);
    }
\fill[line width=2pt,fill=black,fill opacity=0.1] (0,0) -- (1,0) -- (1,1) -- (0,1) -- cycle;
\fill[line width=2pt,fill=black,fill opacity=0.1] (2,0) -- (3,0) -- (2,1) -- cycle;
\draw [line width=1pt] (0,0)-- (1,0);
\draw [line width=1pt] (1,0)-- (1,1);
\draw [line width=1pt] (1,1)-- (0,1);
\draw [line width=1pt] (0,1)-- (0,0);
\draw [line width=1pt] (2,0)-- (3,0);
\draw [line width=1pt] (3,0)-- (2,1);
\draw [line width=1pt] (2,1)-- (2,0);
\draw [line width=1pt] (4,1)-- (5,0);
\begin{tiny}
\draw [fill=black] (0,0) circle (1pt);
\draw[color=black] (-0.05,-0.2) node {$(0,0)$};
\draw [fill=black] (1,0) circle (1pt);
\draw[color=black] (1.05,-0.2) node {$(1,0)$};
\draw [fill=black] (1,1) circle (1pt);
\draw[color=black] (1.05,1.2) node {$(1,1)$};
\draw [fill=black] (0,1) circle (1pt);
\draw[color=black] (-0.05,1.2) node {$(0,1)$};
\draw [fill=black] (2,0) circle (1pt);
\draw[color=black] (1.95,-0.2) node {$(0,0)$};
\draw [fill=black] (3,0) circle (1pt);
\draw[color=black] (3.05,-0.2) node {$(1,0)$};
\draw [fill=black] (2,1) circle (1pt);
\draw[color=black] (1.95,1.2) node {$(0,1)$};
\draw [fill=black] (4,1) circle (1pt);
\draw[color=black] (3.95,1.2) node {$(0,1)$};
\draw [fill=black] (5,0) circle (1pt);
\draw[color=black] (5.05,-0.2) node {$(1,0)$};
\end{tiny}
\draw[color=black] (0.5,-0.6) node {$Q_1$};
\draw[color=black] (2.5,-0.6) node {$Q_2$};
\draw[color=black] (4.5,-0.6) node {$Q_3$};

\fill[line width=2pt,fill=black,fill opacity=0.1] (6,3) -- (6,1) -- (7,0) -- (9,0) -- (9,1) -- (7,3) -- cycle;
\draw [line width=1pt] (6,3)-- (6,1);
\draw [line width=1pt] (6,1)-- (7,0);
\draw [line width=1pt] (7,0)-- (9,0);
\draw [line width=1pt] (9,0)-- (9,1);
\draw [line width=1pt] (9,1)-- (7,3);
\draw [line width=1pt] (7,3)-- (6,3);
\begin{tiny}
\draw [fill=black] (6,1) circle (1pt);
\draw[color=black] (5.65,1) node {$(0,1)$};
\draw [fill=black] (7,0) circle (1pt);
\draw[color=black] (7,-0.2) node {$(1,0)$};
\draw [fill=black] (6,3) circle (1pt);
\draw[color=black] (5.9,3.2) node {$(0,3)$};
\draw [fill=black] (7,3) circle (1pt);
\draw[color=black] (7.1,3.2) node {$(1,3)$};
\draw [fill=black] (9,1) circle (1pt);
\draw[color=black] (9.35,1) node {$(3,1)$};
\draw [fill=black] (9,0) circle (1pt);
\draw[color=black] (9,-0.2) node {$(3,0)$};
\draw [fill=black] (6,2) circle (0.5pt);
\draw [fill=black] (7,1) circle (0.5pt);
\draw [fill=black] (7,2) circle (0.5pt);
\draw [fill=black] (8,0) circle (0.5pt);
\draw [fill=black] (8,1) circle (0.5pt);
\draw [fill=black] (8,2) circle (0.5pt);
\end{tiny}
\draw[color=black] (8,-0.6) node {$Q = Q_1 + Q_2 + Q_3$};
\end{tikzpicture}
\caption{The Newton polytopes associated to $f_1$, $f_2$, $f_3$ and their Minkowski sum.}
\end{figure}
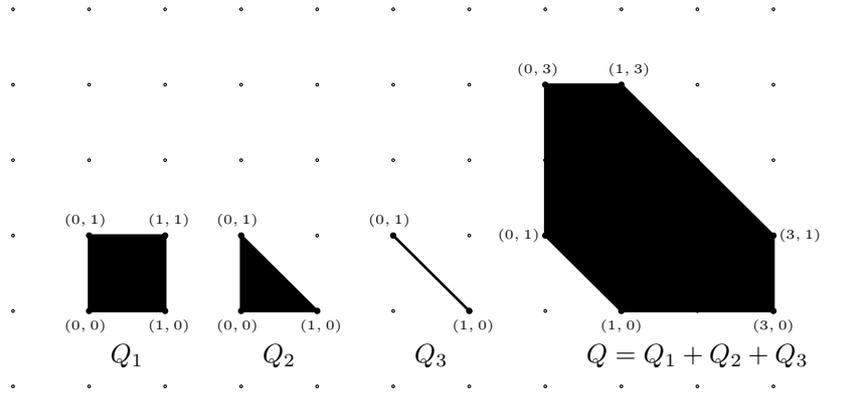\\
For this collection of supports, one can take $\delta = (-1+\varepsilon,-1+\varepsilon)$ with $\varepsilon > 0$ sufficiently small, for instance $\varepsilon = 0.1$, which gives us the Canny-Emiris set
\[
\calE := (Q + \delta) \cap \Z^n = \{ (0,0), (1,0), (0,1), (2,0), (1,1), (0,2) \} \enspace ,
\]
corresponding to the set of monomials
\[
\{ 1, x_1, x_2, x_1^2, x_1x_2, x_2^2 \} \enspace .
\]
Note that we could simply have chosen $\delta$ to be $(\varepsilon,\varepsilon)$ instead, but we chose to also translate $Q$ by $(-1,-1)$ so that $Q+\delta$ contains the origin, thus ensuring that $\calE$ contains smaller degree monomials. We can therefore write the respective submatrices $\mac^{(1)}_{\calE}$ and $\mac^{(2)}_{\calE}$ of the Macaulay matrix associated to the set $\calE$ and we obtain the following $7 \times 6$ matrices
\[
\mac^{(1)}_{\calE} =
\begin{blockarray}{*{7}{c}}
& 1 & x_1 & x_2 & x_1^2 & x_1x_2 & x_2^2\\
\begin{block}{c(*{6}{c})}
f_1 & 1 & 2 & 1 & & 1 &\\
f_2 & 0 & 0 & 1 & & &\\
x_1f_2 & & 0 & & 0 & 1 &\\
x_2f_2 & & & 0 & & 0 & 1\\
f_3 & & 2 & 0 & & &\\
x_1f_3 & & & & 2 & 0 &\\
x_2f_3 & & & & & 2 & 0\\
\end{block}
\end{blockarray}
\quad \textrm{and} \quad
\mac^{(2)}_{\calE} =
\begin{blockarray}{*{7}{c}}
& 1 & x_1 & x_2 & x_1^2 & x_1x_2 & x_2^2\\
\begin{block}{c(*{6}{c})}
f_1 & 1 & 4 & 1 & & 3 &\\
f_2 & 0 & 0 & 1 & & &\\
x_1f_2 & & 0 & & 0 & 1 &\\
x_2f_2 & & & 0 & & 0 & 1\\
f_3 & & 2 & 0 & & &\\
x_1f_3 & & & & 2 & 0 &\\
x_2f_3 & & & & & 2 & 0\\
\end{block}
\end{blockarray} \enspace .
\]
One can check that the system $(\mathcal{S}_1)$ does not have a common root, as the different intersection points of the tropical hypersurfaces associated to $f_1$, $f_2$ and $f_3$ are listed on \Cref{fig:sys1} (and we know that two tropical lines only have at most one intersection point, and a line and a quadric have at most two intersection points).

\begin{figure}[H]
\centering
\begin{multicols}{2}
\scalebox{0.75}{
\begin{tikzpicture}[line cap=round,line join=round,>=triangle 45,x=1cm,y=1cm,scale=0.75]
\clip(-8,-4) rectangle (8,4);
\draw [line width=2pt,color=color3,domain=-12:12] plot(\x,{(--2--1*\x)/1});
\draw [line width=2pt,color=color2,domain=0:12] plot(\x,{(-9--9*\x)/9});
\draw [line width=2pt,color=color2] (0,-1) -- (0,-8);
\draw [line width=2pt,color=color2,domain=-12:0] plot(\x,{(--6-0*\x)/-6});
\draw [line width=2pt,color=color1] (-1,0) -- (-12,0);
\draw [line width=2pt,color=color1] (-1,0) -- (-1,-8);
\draw [line width=2pt,color=color1] (-1,0) -- (0,1);
\draw [line width=2pt,color=color1] (0,1) -- (0,8);
\draw [line width=2pt,color=color1] (0,1) -- (12,1);
\begin{scriptsize}
\draw [fill=color2] (0,2) circle (3.5pt);
\draw[color=color2] (-0.7,2.3) node {$(0,2)$};
\draw [fill=color3] (2,1) circle (3.5pt);
\draw[color=color3] (2.6,0.6) node {$(2,1)$};
\draw [fill=color3] (-1,-1) circle (3.5pt);
\draw[color=color3] (-2,-1.4) node {$(-1,-1)$};
\draw [fill=color2] (-2,0) circle (3.5pt);
\draw[color=color2] (-2.6,0.4) node {$(-2,0)$};
\draw [fill=color1] (-3,-1) circle (3.5pt);
\draw[color=color1] (-4.6,-1.4) node {$(-3,-1)$};
\end{scriptsize}
\begin{large}
\draw[color=color1] (7,0.5) node {$\tvar(f_1)$};
\draw[color=color2] (6,3.5) node {$\tvar(f_2)$};
\draw[color=color3] (-4,-3.5) node {$\tvar(f_3)$};
\end{large}
\end{tikzpicture}
}
\flushright
\begin{tikzpicture}[line cap=round,line join=round,>=triangle 45,x=1cm,y=1cm]
\clip(-1,-1) rectangle (4,4);
\draw[line width=0.5pt,fill=color1,fill opacity=0.1] (0,2) -- (1,1) -- (1,0) -- (0,1) -- cycle;
\draw[line width=0.5pt,fill=color2,fill opacity=0.1] (0,3) -- (1,2) -- (1,1) -- (0,2) -- cycle;
\draw[line width=0.5pt,fill=color2,fill opacity=0.1] (0,3) -- (1,3) -- (2,2) -- (1,2) -- cycle;
\draw[line width=0.5pt,fill=color3,fill opacity=0.1] (2,2) -- (2,1) -- (3,0) -- (3,1) -- cycle;
\draw[line width=0.5pt,fill=color3,fill opacity=0.1] (1,1) -- (2,1) -- (2,0) -- (1,0) -- cycle;
\draw [line width=0.5pt,color=black] (1,2) -- (2,1);
\draw [line width=0.5pt,color=black] (1,0) -- (0,1);
\draw [line width=0.5pt,color=black] (0,1) -- (0,3);
\draw [line width=0.5pt,color=black] (0,3) -- (1,3);
\draw [line width=0.5pt,color=black] (1,3) -- (3,1);
\draw [line width=0.5pt,color=black] (3,1) -- (3,0);
\draw [line width=0.5pt,color=black] (3,0) -- (1,0);
\draw (0,0) circle (1pt);
\draw (2,3) circle (1pt);
\draw (3,3) circle (1pt);
\draw (3,2) circle (1pt);
\begin{scriptsize}
\draw [fill=black] (0,1) circle (1pt);
\draw[color=black] (-0.4,1) node {$(0,1)$};
\draw [fill=black] (1,0) circle (1pt);
\draw[color=black] (1,-0.2) node {$(1,0)$};
\draw [fill=black] (0,3) circle (1pt);
\draw[color=black] (-0.1,3.2) node {$(0,3)$};
\draw [fill=black] (1,3) circle (1pt);
\draw[color=black] (1.1,3.2) node {$(1,3)$};
\draw [fill=black] (1,2) circle (1pt);
\draw [fill=black] (1,1) circle (1pt);
\draw [fill=black] (2,1) circle (1pt);
\draw [fill=black] (3,1) circle (1pt);
\draw[color=black] (3.4,1) node {$(3,1)$};
\draw [fill=black] (3,0) circle (1pt);
\draw[color=black] (3,-0.2) node {$(3,0)$};
\draw [fill=black] (0,2) circle (1pt);
\draw [fill=black] (2,2) circle (1pt);
\draw [fill=black] (2,0) circle (1pt);
\end{scriptsize}
\begin{large}
\draw[color=color1] (0.5,1) node {\ding{182}};
\draw[color=color2] (0.5,2) node {\ding{183}};
\draw[color=color2] (1,2.5) node {\ding{183}};
\draw[color=color3] (2.5,1) node {\ding{184}};
\draw[color=color3] (1.5,0.5) node {\ding{184}};
\end{large}
\end{tikzpicture}
\end{multicols}
\caption{The arrangement of tropical hypersurfaces of the polynomials from the system $(\mathcal{S}_1)$ and the associated subdivision of $Q$.}
\label{fig:sys1}
\end{figure}
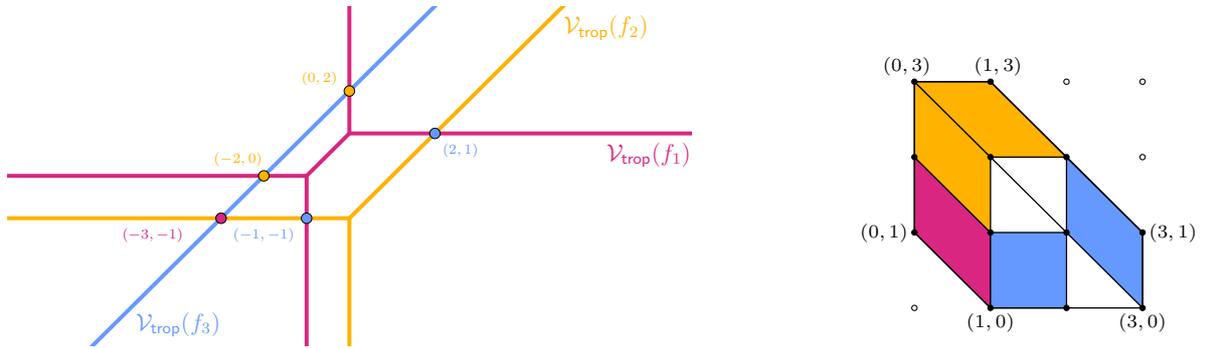

With a similar argument to the first system, one can check that $(-3,-1)$ is the only common root of the system $(\mathcal{S}_2)$ (see \Cref{fig:sys2}), and indeed by choosing
\[
y = \ver(-3,-1) = \begin{pmatrix}
0\\ -3\\ -1\\ -6\\ -4\\ -2
\end{pmatrix} \enspace ,
\]
we observe that
\[
\mac^{(2)}_{\calE} \tdot y \bal \zero \enspace .
\]
Moreover, note that the set of solutions $y \in \R^6$ to the tropical linear system $\mac_\calE^{(2)} \tdot y \bal \zero$ consists precisely in the set $\{ \lambda + \ver(-3, -1) : \lambda \in \R \}$ of tropical multiples of the Veronese embedding \eqref{eq:veronese} of the point $(-3, -1)$, which indeed attests to the uniqueness of the solution $(-3, -1)$, as two distinct solutions would have two non-collinear Veronese embeddings, in the tropical sense.
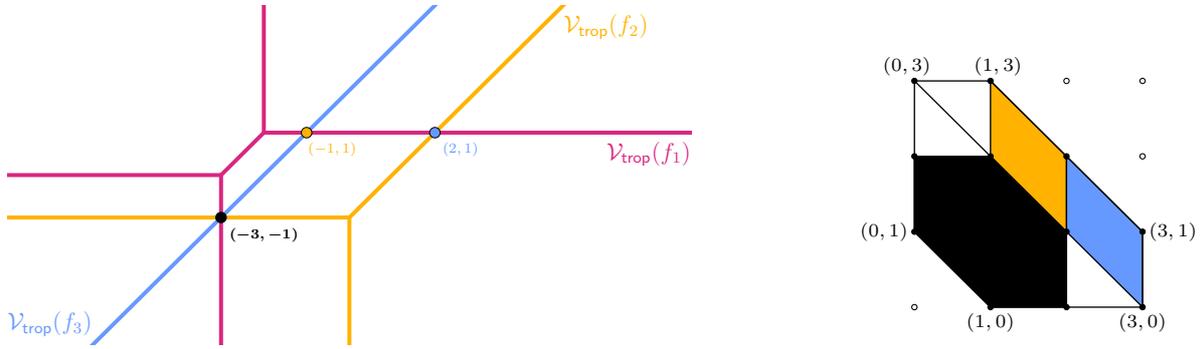
\begin{figure}[h]
\centering
\begin{multicols}{2}
\scalebox{0.75}{
\begin{tikzpicture}[line cap=round,line join=round,>=triangle 45,x=1cm,y=1cm,scale=0.75]
\clip(-8,-4) rectangle (8,4);
\draw [line width=2pt,color=color3,domain=-12:12] plot(\x,{(--2--1*\x)/1});
\draw [line width=2pt,color=color2,domain=0:12] plot(\x,{(-9--9*\x)/9});
\draw [line width=2pt,color=color2] (0,-1) -- (0,-8);
\draw [line width=2pt,color=color2,domain=-12:0] plot(\x,{(--6-0*\x)/-6});
\draw [line width=2pt,color=color1] (-3,0) -- (-12,0);
\draw [line width=2pt,color=color1] (-3,0) -- (-3,-8);
\draw [line width=2pt,color=color1] (-3,0) -- (-2,1);
\draw [line width=2pt,color=color1] (-2,1) -- (-2,8);
\draw [line width=2pt,color=color1] (-2,1) -- (12,1);
\begin{scriptsize}
\draw [fill=color2] (-1,1) circle (3.5pt);
\draw[color=color2] (-0.4,0.6) node {$(-1,1)$};
\draw [fill=color3] (2,1) circle (3.5pt);
\draw[color=color3] (2.6,0.6) node {$(2,1)$};
\draw [fill=black] (-3,-1) circle (3.5pt);
\draw[color=black] (-2,-1.4) node {$\boldsymbol{(-3,-1)}$};
\end{scriptsize}
\begin{large}
\draw[color=color1] (7,0.5) node {$\tvar(f_1)$};
\draw[color=color2] (6,3.5) node {$\tvar(f_2)$};
\draw[color=color3] (-7,-3.5) node {$\tvar(f_3)$};
\end{large}
\end{tikzpicture}
}
\flushright
\begin{tikzpicture}[line cap=round,line join=round,>=triangle 45,x=1cm,y=1cm]
\clip(-1,-1) rectangle (4,4);
\draw[line width=0.5pt,fill=color2,fill opacity=0.1] (1,2) -- (1,3) -- (2,2) -- (2,1) -- cycle;
\draw[line width=0.5pt,fill=color3,fill opacity=0.1] (2,2) -- (2,1) -- (3,0) -- (3,1) -- cycle;
\draw[line width=0.5pt,fill=black,fill opacity=0.1] (0,2) -- (1,2) -- (2,1) -- (2,0) -- (1,0) -- (0,1) -- cycle;
\draw [line width=0.5pt,color=black] (0,3) -- (1,2);
\draw [line width=0.5pt,color=black] (1,0) -- (0,1);
\draw [line width=0.5pt,color=black] (0,1) -- (0,3);
\draw [line width=0.5pt,color=black] (0,3) -- (1,3);
\draw [line width=0.5pt,color=black] (1,3) -- (3,1);
\draw [line width=0.5pt,color=black] (3,1) -- (3,0);
\draw [line width=0.5pt,color=black] (3,0) -- (1,0);
\draw (0,0) circle (1pt);
\draw (2,3) circle (1pt);
\draw (3,3) circle (1pt);
\draw (3,2) circle (1pt);
\begin{scriptsize}
\draw [fill=black] (0,1) circle (1pt);
\draw[color=black] (-0.4,1) node {$(0,1)$};
\draw [fill=black] (1,0) circle (1pt);
\draw[color=black] (1,-0.2) node {$(1,0)$};
\draw [fill=black] (0,3) circle (1pt);
\draw[color=black] (-0.1,3.2) node {$(0,3)$};
\draw [fill=black] (1,3) circle (1pt);
\draw[color=black] (1.1,3.2) node {$(1,3)$};
\draw [fill=black] (1,2) circle (1pt);
\draw [fill=black] (1,1) circle (1pt);
\draw [fill=black] (2,1) circle (1pt);
\draw [fill=black] (3,1) circle (1pt);
\draw[color=black] (3.4,1) node {$(3,1)$};
\draw [fill=black] (3,0) circle (1pt);
\draw[color=black] (3,-0.2) node {$(3,0)$};
\draw [fill=black] (0,2) circle (1pt);
\draw [fill=black] (2,2) circle (1pt);
\draw [fill=black] (2,0) circle (1pt);
\end{scriptsize}
\begin{large}
\draw[color=color2] (1.5,2) node {\ding{183}};
\draw[color=color3] (2.5,1) node {\ding{184}};
\end{large}
\end{tikzpicture}
\end{multicols}
\caption{The arrangement of tropical varieties of the polynomials from the system $(\mathcal{S}_2)$ and the associated subdivision of $Q$.}
\label{fig:sys2}
\end{figure}
\end{expl}

\begin{rmk}\label{rmk:tight} This improves on Grigoriev and Podolskii's Tropical Dual Nullstellensatz from \cite[Theorem 3.3 (i)]{GP18}, which requires $N \geq (n+2)(d_1 + \cdots + d_k)$.
Moreover, under the condition that the Newton polytope of $f$ is full-dimensional, and when $k=n+1$, we retrieve the classical Macaulay bound $N \geq d_1 + \cdots + d_{n+1}-n$ (see \cite{Laz81,Laz83,Giu84}).

In \cite[\S 4.6]{GP18}, the authors provide for all degree $d \geq 2$ and all number $n \geq 2$ of variables the following family of $n+1$ polynomials of degree at most $d$
\[
\begin{array}{rcl}
f_1 &=& 0 \tplus 0x_1\\
f_i &=& 0x_{i-1}^d \tplus 0x_i, \quad 2 \leq i \leq n\\
f_{n+1} &=& 0 \tplus 1x_n
\end{array}
\]
and show that the linearized system with truncation degree $N = (n-1)(d-1)$
\[
\mac_{(n-1)(d-1)} \tdot y \bal \zero
\]
has a solution in $\R^{N+n \choose n}$ while the polynomial system does not have a solution in $\R^n$, showing that our improved bound is tight, as in this example, our bound yields
\[
d_1 + \cdots + d_{n+1} - n = 1 + (n-1)d + 1 - n = (n-1)(d-1) + 1 \enspace .
\]

However, in non-square cases, the lower bound in \Cref{cor:null} is not necessarily optimal.

For instance, in the case of $k > n+1$ degree one polynomials, for all $1 \leq i \leq k$, the tropical polynomial function associated to $f_i$ is simply a tropical affine function
\[
(x_1, \ldots, x_n) \mapsto f_{i0} \tplus f_{i1}x_1 \tplus \cdots \tplus f_{in}x_n \enspace ,
\]
and thus $x = (x_1, \ldots, x_n) \in \R^n$ is a common root of $f_1, \ldots, f_k$ if and only if
\[
\begin{pmatrix}
f_{10} & f_{11} & \cdots & f_{1n}\\
\vdots & \vdots & & \vdots\\
f_{k0} & f_{k1} & \cdots & f_{kn}
\end{pmatrix} \tdot \begin{pmatrix}
1\\
x_1\\
\vdots\\
x_n
\end{pmatrix} \bal \zero \enspace ,
\]
and thus the collection of polynomials $f$ has a common root if and only if the matrix $(f_{ij})_{\substack{1 \leq i \leq k\\ 0 \leq j \leq n}}$ has an element $(y_0,\ldots,y_n) \in \R^{n+1}$ in its right null space, in which case $(y_1 - y_0, \ldots, y_n - y_0) \in \R^n$ is a common root of $f$. But this matrix corresponds to the submatrix of the Macaulay matrix $\mac$ obtained by taking $N = 1$ as the truncation degree for the Macaulay matrix, while the previous bound gives $N = d_1 + \cdots + d_k - n = k - n$.
\end{rmk}

\begin{rmk}\label{rmk:non-toric}
Although \Cref{thm:nullstellensatz} only deals with the toric case, \emph{i.e.} only accounts for solutions in $x \in \R^n$, one can still use it to deal with the non-toric case and find solutions in $x \in \T^n$: for any subset $I$ of $\{ 1, \ldots, n \}$, if $x \in \T^n$ is a solution of a tropical polynomial system such that
\[
\left\{\begin{array}{ll}
x_i \neq \zero &\textrm{for all } i \in I\\
x_i = \zero &\textrm{for all } i \in J := \{ 1, \ldots, n \} \setminus I \enspace ,
\end{array}\right.
\]
then $x_I := (x_i)_{i \in I} \in \R^I$ is a root of the tropical polynomial system obtained by removing all the monomials in which the variables $X_j$ for $j \in J$ appear.

Note however that in \cite[Theorems 3.3 (ii) and 4.20]{GP18}, it is shown that the linearization remains valid with $-\infty$ but at the price of an exponential blow up of the truncation degree, which becomes $N = 2(n+2)^2k(4d)^{\min(n,k)+2}$ , and thus for practical applications, enumerating the $2^n$ possible supports of a solution leads to a faster method.
\end{rmk}

\begin{rmk}
The assumption that the considered Canny-Emiris set $\calE$ is nonempty is needed because it is possible to find systems both with and without a common root for which the empty set is a Canny-Emiris subset associated to the system. For instance for $n=3$ and $k=2$, consider the system
\[
(S_1) : \left\{\begin{array}{rcl}
f_1 &=& 0 \tplus 0x_1 \tplus 0x_2 \tplus 0x_3\\
f_2 &=& 0 \enspace .
\end{array}\right.
\]
In this case, $Q$ is simply the tetrahedron with vertices $(0,0,0)$, $(0,0,1)$, $(0,1,0)$ and $(1,0,0)$. Now if we take $\delta = (\varepsilon,\varepsilon,\varepsilon)$ for $\varepsilon > 0$ small enough, we obtain a Canny-Emiris set $\calE := (Q+\delta) \cap \Z^n$ which is empty, as illustrated in \Cref{fig:poly}.
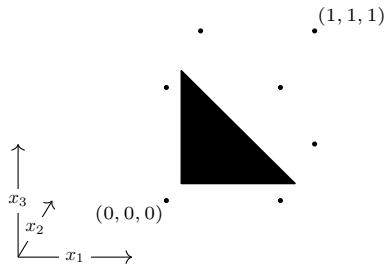
\begin{figure}[h]
\centering
\begin{tikzpicture}[x={(1cm,0cm)},y={(0.3cm,0.5cm)},z={(0cm,1cm)},scale=1.5]
\draw[->] (-1,-1,0) -- node[fill=white,scale=0.7] {$x_1$} ++ (1,0,0);
\draw[->] (-1,-1,0) -- node[fill=white,scale=0.7] {$x_2$} ++ (0,1,0);
\draw[->] (-1,-1,0) -- node[fill=white,scale=0.7] {$x_3$} ++ (0,0,1);

\begin{scriptsize}
\draw[fill=black] (0,0,0) circle (0.5pt);
\draw (-0.25,-0.25,0) node {$(0,0,0)$};
\draw[fill=black] (0,0,1) circle (0.5pt);
\draw (0,1,0) circle (0.5pt);
\draw[fill=black] (0,1,1) circle (0.5pt);
\draw[fill=black] (1,0,0) circle (0.5pt);
\draw[fill=black] (1,0,1) circle (0.5pt);
\draw[fill=black] (1,1,0) circle (0.5pt);
\draw[fill=black] (1,1,1) circle (0.5pt);
\draw (1.25,1.25,1) node {$(1,1,1)$};
\end{scriptsize}

\draw [line width=0.5pt] (0.1,0.1,0.1) -- (1.1,0.1,0.1);
\draw [line width=0.5pt,dashed] (0.1,0.1,0.1) -- (0.1,1.1,0.1);
\draw [line width=0.5pt] (0.1,0.1,0.1) -- (0.1,0.1,1.1);
\draw [line width=0.5pt,dashed] (1.1,0.1,0.1) -- (0.1,1.1,0.1);
\draw [line width=0.5pt,dashed] (0.1,1.1,0.1) -- (0.1,0.1,1.1);
\draw [line width=0.5pt] (0.1,0.1,1.1) -- (1.1,0.1,0.1);

\fill[line width=2pt,color=black,fill=black,fill opacity=0.1] (0.1,0.1,0.1) -- (1.1,0.1,0.1) -- (0.1,0.1,1.1) -- cycle;
\end{tikzpicture}
\caption{The polytope $Q+\delta$ with $\varepsilon=0.1$ for the system $(S_1)$.}
\label{fig:poly}
\end{figure}
Of course, the system $(S_1)$ does not have a common root since $f_2$ is a constant.

Now, still for $n=3$ and $k=2$, consider the system
\[
(S_2) : \left\{\begin{array}{rcl}
f_1 &=& 0 \tplus 0x_1 \tplus 0x_3\\
f_2 &=& 0 \tplus 0x_2 \enspace .
\end{array}\right.
\]
Now, $Q$ is the triangular prism with vertices $(0,0,0)$, $(1,0,0)$, $(0,0,1)$, $(0,1,0)$, $(1,1,0)$ and $(0,1,1)$. Once again, if we take $\delta = (\varepsilon, \varepsilon, \varepsilon)$ for $\varepsilon > 0$ small enough, we obtain a Canny-Emiris set which is empty, as illustrated in \Cref{fig:poly2}.
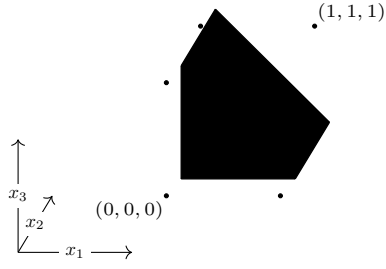
\begin{figure}[h]
\centering
\begin{tikzpicture}[x={(1cm,0cm)},y={(0.3cm,0.5cm)},z={(0cm,1cm)},scale=1.5]
\draw[->] (-1,-1,0) -- node[fill=white,scale=0.7] {$x_1$} ++ (1,0,0);
\draw[->] (-1,-1,0) -- node[fill=white,scale=0.7] {$x_2$} ++ (0,1,0);
\draw[->] (-1,-1,0) -- node[fill=white,scale=0.7] {$x_3$} ++ (0,0,1);

\begin{scriptsize}
\draw[fill=black] (0,0,0) circle (0.5pt);
\draw (-0.25,-0.25,0) node {$(0,0,0)$};
\draw[fill=black] (0,0,1) circle (0.5pt);
\draw (0,1,0) circle (0.5pt);
\draw[fill=black] (0,1,1) circle (0.5pt);
\draw[fill=black] (1,0,0) circle (0.5pt);
\draw[fill=black] (1,0,1) circle (0.5pt);
\draw (1,1,0) circle (0.5pt);
\draw[fill=black] (1,1,1) circle (0.5pt);
\draw (1.25,1.25,1) node {$(1,1,1)$};
\end{scriptsize}

\draw [line width=0.5pt] (0.1,0.1,0.1) -- (1.1,0.1,0.1);
\draw [line width=0.5pt,dashed] (0.1,1.1,0.1) -- (1.1,1.1,0.1);
\draw [line width=0.5pt] (0.1,0.1,0.1) -- (0.1,0.1,1.1);
\draw [line width=0.5pt,dashed] (0.1,1.1,0.1) -- (0.1,1.1,1.1);
\draw [line width=0.5pt] (0.1,0.1,1.1) -- (1.1,0.1,0.1);
\draw [line width=0.5pt] (0.1,1.1,1.1) -- (1.1,1.1,0.1);
\draw [line width=0.5pt,dashed] (0.1,0.1,0.1) -- (0.1,1.1,0.1);
\draw [line width=0.5pt] (1.1,0.1,0.1) -- (1.1,1.1,0.1);
\draw [line width=0.5pt] (0.1,0.1,1.1) -- (0.1,1.1,1.1);

\fill[line width=2pt,color=black,fill=black,fill opacity=0.1] (0.1,0.1,0.1) -- (1.1,0.1,0.1) -- (1.1,1.1,0.1) -- (0.1,1.1,1.1) -- (0.1,0.1,1.1) -- cycle;
\end{tikzpicture}
\caption{The polytope $Q+\delta$ with $\varepsilon=0.1$ for the system $(S_2)$.}
\label{fig:poly2}
\end{figure}
This time however, the system $(S_2)$ has a common root, namely $(0,0,0)$.

Therefore, it is \textit{a priori} not possible to conclude if the considered Canny-Emiris set is taken empty, although excluding the case where some of the polynomials of the system are monomials is rather degenerate. Excluding this particular case, one can wonder if it is then possible to reach a conclusion in the case of systems where there exists a empty associated Canny-Emiris set.
\end{rmk}

\subsection{Preliminary results}\label{sec:prel}

In this section, we state and prove a number of lemmas which will be used in order to prove \Cref{thm:nullstellensatz}.

\subsubsection{Nonsingular and diagonally dominant tropical matrices}

We first recall the definition of nonsingularity and diagonal dominance for tropical matrices. Set two integers $p, q \in \Npos$.

\begin{dfn}
Let $A = (a_{ij})_{(i, j) \in [p] \times [q]}$ be a $p \times q$ tropical matrix. Then the matrix $A$ is said to be \textit{tropically nonsingular} whenever the only solution to the equation $A \tdot y \bal \zero$ of unknown $y \in \T^q$ is $y = \zero$.
\end{dfn}

\begin{rmk}\label{rmk:tdet}
In the case where $A \in \T^{p \times p}$ is a square matrix, one can consider its \textit{tropical determinant} $\tdet(A)$ which is given by
\[
\tdet(A) = \max_{\sigma \in \mathfrak{S}_p} a_{1\sigma(1)} + \cdots + a_{p\sigma(p)} \enspace .
\]
If the maximum in the previous expression is attained exactly once, hence if $\tdet(A) \nbal \zero$, then as a direct consequence of Corollary 6.12 of \cite{AGG08}, the only possible solution to the equation $A \tdot y \bal \zero$ is $y = \zero$, \textit{i.e.} the matrix $A$ is tropically nonsingular.
\end{rmk}

\begin{dfn}
A matrix $A = (a_{ij})_{(i, j) \in [p] \times [p]} \T^{p \times p}$ is said to be \textit{weakly diagonally dominant} in the tropical sense whenever we have the inequalities
\[
a_{ii} \geq a_{ij} \quad \textrm{for all} \quad 1 \leq i, j \leq p \text{ such that } i \neq j \enspace ,
\]
and we say that it is \textit{(strictly) diagonally dominant} if these inequalities are strict.
\end{dfn}

\begin{rmk}
This notion of tropical diagonal dominance just corresponds to the tropical version of classical diagonal dominance, as the inequality of the above definition is equivalent to
\[
a_{ii} \geq \tsum_{1 \leq j \neq i \leq p} a_{ij} = \max_{1 \leq j \neq i \leq p} a_{ij} \quad \textrm{for all} \quad 1 \leq i \leq p \enspace .
\]
\end{rmk}

A notable fact about diagonally dominant tropical matrices, which will play a crucial role in the proof of \Cref{thm:nullstellensatz}, is the fact that similarly to classical diagonally dominant matrices, these matrices are non-singular. More precisely, we have the following result.

\begin{lem}\label{lem:ddns}
Let $A \in \T^{p \times p} = (a_{ij})_{(i, j) \in [p] \times [p]}$ be a diagonally dominant tropical matrix. Then $A$ is tropically nonsingular.
\end{lem}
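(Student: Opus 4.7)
My plan is to prove the contrapositive in a direct way: pick a ``maximal coordinate'' of any candidate $y$ and use strict diagonal dominance to show the tropical max in that row of $A\tdot y$ is achieved exactly once, contradicting $A\tdot y\bal\zero$ unless $y=\zero$.

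In detail, suppose for contradiction that there exists $y=(y_1,\dots,y_p)\in\T^p\setminus\{\zero\}$ with $A\tdot y\bal\zero$. First I note that strict diagonal dominance forces every diagonal entry $a_{ii}$ to be finite: otherwise $a_{ii}=-\infty$ and the strict inequality $a_{ii}>a_{ij}$ would be impossible. Since $y\neq\zero$, at least one coordinate of $y$ is finite, so the set $[p]$ admits an index
\[
k_0 \in \argmax_{j\in[p]} y_j,
\]
with $y_{k_0}\in\R$, and $y_j\leq y_{k_0}$ for every $j\in[p]$ (with the convention $-\infty\leq y_{k_0}$).

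Now I examine the $k_0$-th coordinate of $A\tdot y$, namely the expression
\[
\max_{1\leq j\leq p}\bigl(a_{k_0 j}+y_j\bigr).
\]
The term for $j=k_0$ equals $a_{k_0 k_0}+y_{k_0}$, which is finite. For any $j\neq k_0$, strict diagonal dominance gives $a_{k_0 j}<a_{k_0 k_0}$, and by the choice of $k_0$ we have $y_j\leq y_{k_0}$; hence
\[
a_{k_0 j}+y_j < a_{k_0 k_0}+y_{k_0}.
\]
So the maximum in row $k_0$ is attained uniquely at $j=k_0$, which contradicts the assumption $A\tdot y\bal\zero$ (which requires the maximum to be attained at least twice, or all entries to be $\zero$). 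This contradiction forces $y=\zero$, proving tropical nonsingularity.

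There is no serious obstacle here; the only subtlety is ruling out the degenerate cases ($a_{ii}=-\infty$, or $y=\zero$ componentwise) and carefully handling the convention that $-\infty$ may appear as an entry of $y$ or of $A$ off the diagonal. As an alternative, one could instead argue via the tropical determinant of Remark~\ref{rmk:tdet}: for any non-identity permutation $\sigma\in\mathfrak{S}_p$ there is some $i$ with $\sigma(i)\neq i$, so strict diagonal dominance yields $\sum_i a_{i\sigma(i)}<\sum_i a_{ii}$, showing that the identity is the unique maximizer of $\tdet(A)$ and hence $\tdet(A)\nbal\zero$, which by Remark~\ref{rmk:tdet} already implies nonsingularity. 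I prefer the direct argument above because it avoids invoking \cite{AGG08} and works without first restricting to the square case's combinatorial expansion.
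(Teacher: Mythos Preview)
Your proof is correct and follows essentially the same approach as the paper: pick an index $k_0$ where $y$ is maximal, then use strict diagonal dominance together with $y_j\leq y_{k_0}$ to see that the maximum in row $k_0$ of $A\tdot y$ is achieved uniquely at $j=k_0$, forcing $y_{k_0}=-\infty$ and hence $y=\zero$. Your added remark that $a_{ii}$ must be finite is a small clarification the paper leaves implicit, and your alternative via the tropical determinant is exactly the argument the paper records in the remark following the lemma.
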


\begin{proof}
Let $y = (y_1, \ldots, y_p) \in \T^p$ be such that $A \tdot y \bal \zero$, and consider $1 \leq i \leq p$ such that $y_i = \max_{1 \leq j \leq p} y_j$. Then from the relation $A \tdot y \bal \zero$, it follows in particular that the maximum in the expression
\[
\max_{1 \leq j \leq p} (a_{ij} + y_j)
\]
is attained twice, but since for all $1 \leq j \leq p$, we have
\[
a_{ii} > a_{ij} \quad \textrm{and} \quad y_i \geq y_j \enspace ,
\]
the only possible way such that the maximum in the previous expression is attained twice is that
\[
y_i = -\infty
\]
and thus
\[
y = \zero \enspace . \qedhere
\]
\end{proof}

\begin{rmk}
Alternatively, one can retrieve the previous lemma with the following argument: since $A$ is diagonally dominant, it means that the maximum in the expression
\[
\tdet(A) = \max_{\sigma \in \mathfrak{S}_d} a_{1\sigma(1)} + \cdots + a_{d\sigma(d)}
\]
is attained exactly once, hence $\tdet(A) \nbal \zero$, and thus by the previous remark, $A$ is tropically nonsingular.
\end{rmk}

Finally we will also make use of the following two lemmas.

\begin{lem}\label{lem:nsg1}
Let $A = (a_{ij})_{(i,j) \in [p] \times [q]}$ be a $p \times q$ tropical matrix. Fix for $1 \leq j \leq q$, $\varepsilon_j \in \R$, and set $\widetilde{A} = (\widetilde{a}_{ij})_{(i,j) \in [p] \times [q]} \in \T^{p \times q}$ with $\widetilde{a}_{ij} = a_{ij} + \varepsilon_j$ for all $1 \leq i \leq p$ and $1 \leq j \leq m$. Then $A$ is tropically nonsingular if and only if $\widetilde{A}$ is tropically nonsingular.
\end{lem}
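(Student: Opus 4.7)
The proof is a routine change of variables, exploiting the fact that adding $\varepsilon_j$ to every entry of the $j$-th column amounts to tropically multiplying that column by the scalar $\varepsilon_j$, which can be absorbed into the unknown $y_j$.

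Concretely, the plan is to introduce the map $\varphi : \T^q \to \T^q$ defined componentwise by $\varphi(y)_j = y_j + \varepsilon_j$, with the convention $\bot + \varepsilon_j = \bot$. Since each $\varepsilon_j$ lies in $\R$ (hence is finite), $\varphi$ is a bijection of $\T^q$ with inverse $y \mapsto y - \varepsilon$, and it satisfies $\varphi(\zero) = \zero$ and $\varphi^{-1}(\zero) = \zero$. I would then observe that for every row index $i$ and every $y \in \T^q$, the identities $\widetilde{a}_{ij} + y_j = a_{ij} + \varepsilon_j + y_j = a_{ij} + \varphi(y)_j$ hold termwise, so the set of column indices realizing the maximum in the $i$-th coordinate of $\widetilde{A} \tdot y$ coincides exactly with the set of indices realizing the maximum in the $i$-th coordinate of $A \tdot \varphi(y)$. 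In particular $\widetilde{A} \tdot y \bal \zero$ if and only if $A \tdot \varphi(y) \bal \zero$.

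Combining these two observations, $\varphi$ restricts to a bijection between the tropical kernel of $\widetilde{A}$ and that of $A$, sending $\zero$ to $\zero$. Therefore one kernel equals $\{\zero\}$ if and only if the other does, which is the desired equivalence. There is no real obstacle here; the only mild subtlety worth spelling out is that the addition used to define $\varphi$ correctly preserves both finite entries and the bottom element $\bot$, precisely because each $\varepsilon_j$ is finite.
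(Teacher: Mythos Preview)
Your proof is correct and follows essentially the same approach as the paper: both arguments exploit the change of variables $y_j \mapsto y_j + \varepsilon_j$ to identify the kernels of $A$ and $\widetilde{A}$. Your phrasing via the bijection $\varphi$ is slightly more streamlined than the paper's, which spells out one implication and then invokes symmetry (replacing $\varepsilon_j$ by $-\varepsilon_j$) for the converse, but the content is identical.
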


\begin{proof}
Assume that $A$ is nonsingular, and let $\widetilde{y} = (\widetilde{y}_j)_{1 \leq j \leq q} \in \T^q$ be such that $\widetilde{A} \tdot \widetilde{y} \bal \zero$. Then, this means that for all $1 \leq i \leq p$, the maximum in the expression
\[
\max_{1 \leq j \leq q} \left( \widetilde{a}_{ij} + \widetilde{y}_j \right) = \max_{1 \leq j \leq q} \left( a_{ij} + (\widetilde{y}_j + \varepsilon_j) \right)
\]
is attained twice. In other words, setting $y = (\widetilde{y}_j + \varepsilon_j)_{1 \leq j \leq q}$, we obtain that $A \tdot y \bal \zero$. Therefore, by nonsingularity of $A$, we must have $y = \zero$, and since $\varepsilon_j$ is finite for all $1 \leq j \leq q$, this implies that $\widetilde{y} = \zero$, hence $\widetilde{A}$ is tropically nonsingular.

As for the converse implication, it is obtained by swapping $A$ and $\widetilde{A}$, and by changing $(\varepsilon_j)_{1 \leq j \leq q}$ to $(-\varepsilon_j)_{1 \leq j \leq m}$.
\end{proof}

\begin{lem}\label{lem:nsg2}
Let $A$ be a $p \times q$ tropical matrix, and assume that $A$ can be written by block as a lower-triangular matrix
\[
A = \begin{pmatrix}
A^{(m)} & \zero\\
\ast & \ast
\end{pmatrix}
\]
with $A^{(m)}$ a $m \times m$ square submatrix with $0 < m \leq p, q$. Moreover, assume that $A^{(m)}$ is tropically nonsingular. Then the equation $A \tdot y \bal \zero$ of unknown $y$ has no solution in $\R^q$.
\end{lem}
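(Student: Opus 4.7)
The plan is to argue by contradiction, exploiting the lower-triangular block structure to reduce the feasibility of $A \tdot y \bal \zero$ to the feasibility of $A^{(m)} \tdot y^{(m)} \bal \zero$, and then invoke the hypothesis that $A^{(m)}$ is tropically nonsingular.

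Concretely, suppose there exists $y \in \R^q$ with $A \tdot y \bal \zero$. Write $y = (y^{(m)}, y')$ where $y^{(m)} = (y_1, \ldots, y_m) \in \R^m$ collects the first $m$ coordinates. Fix any $i \in [m]$; by the block decomposition, the entries of the $i$-th row of $A$ satisfy $a_{ij} = \zero = -\infty$ for all $j > m$ and $a_{ij}$ equals the $(i,j)$-entry $a^{(m)}_{ij}$ of $A^{(m)}$ for $j \leq m$. Since every $y_j$ is finite, each term $a_{ij} + y_j$ with $j > m$ equals $-\infty$ and therefore contributes nothing to the maximum
\[
\max_{1 \leq j \leq q}(a_{ij} + y_j) = \max_{1 \leq j \leq m}(a^{(m)}_{ij} + y^{(m)}_j)\enspace.
\]
The assumption $A \tdot y \bal \zero$ guarantees that the right-hand side maximum is attained at least twice for every $i \in [m]$, which is precisely the statement $A^{(m)} \tdot y^{(m)} \bal \zero$.

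By the assumed tropical nonsingularity of $A^{(m)}$, this forces $y^{(m)} = \zero$, i.e.\ $y_j = -\infty$ for all $j \leq m$. This contradicts $y \in \R^q$, since $m \geq 1$ implies at least one coordinate of $y^{(m)}$ must be finite. Hence no such $y$ exists.

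The argument is essentially a one-line observation once one notices that the $-\infty$ entries in the upper-right block are absorbed (they yield $-\infty$ when added to finite reals), decoupling the first $m$ rows from the last $q - m$ columns. No real obstacle arises; the only delicate point is being careful that we are working over $\R^q$ rather than $\T^q$, since it is exactly the finiteness of the $y_j$ that makes the $\zero$ entries inert and allows the reduction to the square subsystem to go through.
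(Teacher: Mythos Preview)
Your proof is correct and follows essentially the same approach as the paper: restrict to the first $m$ rows, use the $\zero$ upper-right block to reduce to $A^{(m)} \tdot y^{(m)} \bal \zero$, and conclude by nonsingularity. One minor remark: the inertness of the $\zero$ entries does not actually rely on the finiteness of the $y_j$ (since $-\infty + y_j = -\infty$ for any $y_j \in \T$); finiteness is only needed at the very end to derive the contradiction from $y^{(m)} = \zero$.
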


\begin{proof}
Let $y = (y_j)_{1 \leq j \leq q} \in \T^q$ be such that $A \tdot y \bal \zero$. Then by setting $y^{(m)} = (y_j)_{1 \leq j \leq m} \in \T^m$, we obtain in particular, by looking at the first $m$ rows of the product $A \tdot y$, that $A^{(m)} \tdot y^{(m)} \bal \zero$, which implies by nonsingularity of $A^{(m)}$ that $y^{(m)} = \zero$ and thus $y$ does not belong to $\R^q$.
\end{proof}

\subsubsection{Generalities on sup-convolution and Minkowski sums}

In order to write the proof of \Cref{thm:nullstellensatz}, we also need to introduce the following definition, which simply corresponds to the tropical equivalent of the convolution product.
The \textit{support} of a function $h : \R^n \to \R \cup \{\pm \infty \}$ is defined
by $\supp(h)=\cl(\{ x \in \R^n : h(x) > -\infty \})$.
The {\em hypograph} of $h$ is the set $\hypo(h)=\{(x,t)\in \R^n\times \R : t\leq h(x)\}$.
\begin{dfn}
The \textit{Minkowski sum} of two subsets $E,F\subset \R^n$
is defined as $E+F=\{x+y : x\in E,\,y\in F\}$.
  The \textit{sup-convolution} is the binary operator $\supco$ defined for all functions $f, g : \R^n \to \R\cup\{\pm\infty\}$ by
\[
f \supco g(x) = \sup_{y+z=x} f(y) + g(z) \enspace ,
\]
with the convention $(-\infty)+(+\infty)=-\infty$.
\end{dfn}
In particular, if $f$ and $g$ are upper semicontinuous, take
values in $\R \cup \{ -\infty \}$, and have compact support, then, the
supremum in the expression of $f\supco g(x)$ is achieved and $f \supco g$ also has compact support. The operations of sup-convolution and Minkowski sum are commutative and associative, and that we have the following immediate properties:
\begin{prt}\label{prt:conv}
  Let $E_1,\dots,E_\ell$ be a collection of subsets of $\R^n$ and let $h_1, \ldots, h_\ell$ be a family of upper semi-continuous functions with compact support from $\R^n$ to $\R\cup\{\pm\-\infty\}$. 
  Let $E=E_1+\dots+E_\ell$ and $h = h_1 \supco \cdots \supco h_\ell$.
Then,
\begin{enumerate}[label = (\alph*)]
\item $E=\{x_1+\dots +x_\ell : x_i\in E_i \, \textrm{ for all } \; 1 \leq i \leq \ell\}$;
\item For all $q \in \R^n$, $h(q) = \max_{q_1 + \cdots + q_k = q} h_1(q_1) + \cdots + h_\ell(q_\ell)$;
\item $\hypo(h) = \hypo(h_1)+\dots +\hypo(h_\ell)$ and $\supp(h) = \supp(h_1)+\dots +\supp(h_\ell)$;
\item For $1 \leq i \leq \ell$, let $\widehat{h}_i := h_1 \supco \cdots \supco h_{i-1} \supco h_{i+1} \supco \cdots \supco h_\ell$. Then we have $h_i \supco \widehat{h}_i = h$.
\end{enumerate}
\end{prt}

\begin{dfn}
The {\em normal cone} $N_x(P)$ to a polyhedron $P\subset \R^n$ \textit{at a point} $x \in P$ is defined by
\[
N_x(P) =  \{y \in \R^n : \<y,z-x>\leq 0,\,\forall z\in P\} \enspace .
\]
The \textit{normal cone} $N_F(P)$ to $P$ \textit{at a face} $F$ of $P$ is defined as $N_x(P)$ where $x$ is an arbitrary point in the relative interior of $F$ --- this object is well-defined because you can easily check that two points in the relative interior of the same face of $P$ yield the same normal fan. Moreover, the collection
\[
\mathscr{N}(P) = \{ N_F(P) : F \textrm{ face of } P \}
\]
of normal cones at every face of $P$ is called the \textit{normal fan} of $P$.
\end{dfn}

\begin{rmk}
Note that the normal fan of a polyhedron $P \subseteq \R^n$ is the same as the normal fan of any translation $P+b$ of $P$. More precisely, if $b \in \R^n$, then the map $F \mapsto F+b$ gives us a correspondance between the faces of $P$ and the faces of $P+b$, and moreover we can easily show that $N_{F+b}(P+b) = N_F(P)$
\end{rmk}

The normal fan $\scrN(P)$ of a polyhedron forms a lattice for the partial order given by the inclusion, and is therefore refered to as the \textit{face lattice} of $P$. Moreover, this lattice is endowed with a grading given by the dimension. The same property holds for the set $\scrF(P)$ of faces of $P$, except that the order and the grading are reversed. In fact, these two lattices are related by the following duality result.

\begin{prp}
Let $P$ be a polyhedron in $\R^n$. Then the map
\[
\begin{array}{rcl}
\mathscr{F}(P) & \longto & \mathscr{N}(P)\\
F & \longmapsto & N_F(P)
\end{array}
\]
is a poset anti-isomorphism from the lattice of faces of $P$ to the lattice of normal cones of $P$. In particular, we have $\dim(F) + \dim(N_F(P)) = n$.
\end{prp}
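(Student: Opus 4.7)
The plan is to establish three things, which together give the statement: the map is surjective onto $\scrN(P)$, it is a bijection whose inverse is also order-reversing, and it satisfies the dimension identity. Surjectivity is immediate from the definition of $\scrN(P)$. My main tool will be the following reformulation of the normal cone. For $c \in \R^n$ at which $\<c,\cdot>$ attains its maximum on $P$, let $M(c) := \{z \in P : \<c,z> = \max_{z' \in P} \<c,z'>\}$ denote the face of maximizers. By the definition of $N_x(P)$, one has $c \in N_x(P) \Leftrightarrow x \in M(c)$; and since $M(c)$ is a face and any face is the closure of its relative interior, $G \subseteq M(c)$ iff $\relint(G) \cap M(c) \neq \emptyset$. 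Applying this with $G = F$ yields the equivalent description
\[
N_F(P) = \{c \in \R^n : F \subseteq M(c)\},
\]
from which anti-monotonicity $F \subseteq G \Rightarrow N_G(P) \subseteq N_F(P)$ is immediate.

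For the converse anti-monotonicity (which also yields injectivity), I would fix an inequality description $P = \{x \in \R^n : Ax \leq b\}$ with rows $a_1,\ldots,a_m$, and for a face $F$ set $I(F) := \{i : \<a_i,x> = b_i \text{ for all } x \in F\}$. The key lemma, obtained from LP duality (Farkas' lemma), is that $N_F(P) = \cone\{a_i : i \in I(F)\}$. Given this, for $c$ in the relative interior of $N_F(P)$, one may write $c = \sum_{i \in I(F)} \lambda_i a_i$ with each $\lambda_i > 0$; then for every $z \in P$, $\<c,z> \leq \sum_i \lambda_i b_i$, with equality iff $\<a_i,z> = b_i$ for all $i \in I(F)$, i.e.\ iff $z \in F$. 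Thus $M(c) = F$. Consequently, if $N_G(P) \subseteq N_F(P)$, picking $c \in \relint(N_G(P))$ yields $M(c) = G$ together with $c \in N_F(P)$, so $F \subseteq M(c) = G$, which proves the reverse implication and in particular injectivity.

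The dimension identity then follows from the same active-constraint description: $\dim(N_F(P)) = \dim\mathrm{span}\{a_i : i \in I(F)\}$, while $\aff(F) = \{x \in \R^n : \<a_i,x> = b_i \text{ for all } i \in I(F)\}$ gives $\dim(F) = n - \dim\mathrm{span}\{a_i : i \in I(F)\}$, so adding the two yields $\dim(F) + \dim(N_F(P)) = n$. I expect the main obstacle to lie in justifying the identity $N_F(P) = \cone\{a_i : i \in I(F)\}$: the inclusion $\supseteq$ is direct, whereas $\subseteq$ rests on LP duality, and when $P$ is not full-dimensional one must also include both $+a_i$ and $-a_i$ among the generators for each inequality tight on all of $P$, so that the cone and the span above correctly account for the lineality directions orthogonal to $\aff(P)$.
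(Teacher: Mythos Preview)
The paper does not actually prove this proposition: its proof reads in full ``This is a quite standard result which is stated, amongst others, in Section 7 of \cite{Tho06}.'' So there is nothing to compare on the level of argument---you have supplied a genuine proof where the paper only cites one.

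Your approach is the standard one via an inequality description $P=\{x:Ax\leq b\}$ and the identity $N_F(P)=\cone\{a_i:i\in I(F)\}$, and it is correct. Two small remarks. First, the step ``for $c\in\relint(N_F(P))$ one may write $c=\sum_{i\in I(F)}\lambda_i a_i$ with each $\lambda_i>0$'' deserves a one-line justification: the barycenter $\sum_{i\in I(F)} a_i$ lies in $\relint(N_F(P))$, and subtracting a small multiple of it from $c$ stays in the cone, which upgrades a nonnegative representation of $c$ to a strictly positive one. Second, you already flagged the non-full-dimensional case correctly; just make explicit that once you include $\pm a_i$ for constraints tight on all of $P$, the identity $\aff(F)=\{x:\<a_i,x>=b_i,\ i\in I(F)\}$ still holds (via a relative-interior point of $F$, where the inactive constraints are strict), so the dimension count goes through unchanged.
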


\begin{proof}
This is a quite standard result which is stated, amongst others, in Section 7 of \cite{Tho06}.
\end{proof}

We deduce the following corollary from the previous proposition, which will help us deal with the case where the polyhedron we are working with is not full-dimensional.

\begin{cor}\label{cor:dico}
Let $P$ be a polyhedron in $\R^n$ and let $W$ be the vector subspace of $\R^n$ directing the affine hull of $P$. Then for any face $F$ of $P$, one has
\[
\dim(N_F(P) \cap W) = \dim(P) - \dim(F) \enspace .
\]
\end{cor}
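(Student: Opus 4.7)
The plan is to reduce the corollary to the full-dimensional case handled by the preceding proposition via a simple orthogonal decomposition of the normal cone.

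The first step is to observe that $W^\perp$ is always contained in $N_F(P)$. Indeed, for any $x$ in the relative interior of $F$ and any $z \in P$, we have $z - x \in W$ (since $P \subseteq \aff(P) = x + W$), so for every $y \in W^\perp$, the inner product $\langle y, z - x \rangle$ vanishes, which trivially satisfies the defining inequality of $N_x(P)$. Hence $W^\perp \subseteq N_F(P)$.

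The second step is to establish the orthogonal decomposition
\[
N_F(P) = (N_F(P) \cap W) \oplus W^\perp.
\]
To see this, take any $y \in N_F(P)$ and write $y = y_W + y_\perp$ with $y_W \in W$ and $y_\perp \in W^\perp$. For any $z \in P$, since $z - x \in W$, we have $\langle y_\perp, z - x\rangle = 0$, and therefore $\langle y_W, z - x\rangle = \langle y, z - x\rangle \leq 0$, which shows that $y_W \in N_F(P) \cap W$. The sum is direct because $W \cap W^\perp = \{0\}$.

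The final step is a dimension count. By the preceding proposition applied to $P$ as a polyhedron in $\R^n$, one has $\dim(F) + \dim(N_F(P)) = n$. Combining this with the direct sum decomposition above yields
\[
\dim(N_F(P) \cap W) = \dim(N_F(P)) - \dim(W^\perp) = (n - \dim(F)) - (n - \dim(P)) = \dim(P) - \dim(F),
\]
since $\dim(W) = \dim(P)$ and therefore $\dim(W^\perp) = n - \dim(P)$. This gives the desired equality. There is no real obstacle here: the result is essentially linear algebra once one notices that the directions perpendicular to the affine hull of $P$ contribute a fixed $(n - \dim(P))$-dimensional summand to every normal cone.
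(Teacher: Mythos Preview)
Your proof is correct, but it takes a different route from the paper's. The paper argues by translating $P$ into $W$ and then recognizing $N_F(P)\cap W$ as the \emph{intrinsic} normal cone of $F$ in $P$, viewed as a full-dimensional polyhedron of the ambient space $W$; the preceding proposition, applied inside $W$, then gives $\dim(F)+\dim(N_F(P)\cap W)=\dim(W)=\dim(P)$ directly. You instead stay in $\R^n$, apply the proposition there to get $\dim(N_F(P))=n-\dim(F)$, and then peel off the extraneous $W^\perp$ summand via the orthogonal decomposition $N_F(P)=(N_F(P)\cap W)\oplus W^\perp$. Both arguments are short and clean: the paper's is slightly more conceptual (it identifies $N_F(P)\cap W$ as a normal cone in its own right), while yours is more explicitly computational and makes transparent exactly where the missing $n-\dim(P)$ dimensions go.
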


\begin{proof}
Since translating a polyhedron does not affect its normal fan, we can assume without loss of generality that $P$ is included in $W$. Then we notice that for any $x \in P$,
\[
N_x(P) \cap W = \{ y \in W : \<y, z-x> \leq 0, \forall z \in P \} \enspace ,
\]
This means that $N_x(P) \cap W$ corresponds to the normal cone at point $x$ of the polyhedron $P$ seen as a full dimensional polyhedron of $W$. We can then apply the previous proposition and we obtain that
\[
\dim(F) + \dim(N_F(P) \cap W) = \dim(W) = \dim(P) \enspace .
\]
\end{proof}

\begin{dfn}
A concave function $\R^n\to\R \cup \{ -\infty \}$ is said to be \textit{polyhedral}
if its hypograph is a (closed) polyhedron.
We say that a convex subset $F$ of $\R^n\times \R$ is {\em vertical} if $(0, \ldots, 0, 1)$ belongs to the vector space directing the affine hull of $F$.
\end{dfn}

\begin{expl}\label{expl:poly}
The main example of concave polyhedral functions that will be of interest in this paper are the functions obtained by taking the concavification of the coefficient map $\omega : \R^n \to \R \cup \{ -\infty \}$ --- that is the infimum of all concave functions greater than or equal to $\omega$ --- defined by
\[
\omega(\alpha) =  \left\{\begin{array}{ll}
f_\alpha & \textrm{if} \quad \alpha \in \supp(f),\\
\zero & \textrm{otherwise}.
\end{array}\right.
\]
of a tropical polynomial $f$. These functions satisfy in particular the property that the projection of the singularities of their graph onto $\R^n \times \{ 0 \}$ is a rational polyhedral complex. Moreover, the projection of 
vertices of their hypograph are elements of $\supp (f)$.
\end{expl}

For all functions $h : \R^n \to \R \cup \{ -\infty \}$, and for all $x\in \R^n$, we set
\[
\cell (x,h) := \argmax_{q\in \R^n} ( \<q,x>+h(q)) \quad \text{and} \quad \face (x,h) := \{ (q,h(q)) : q\in \cell(x,h)\} \enspace .
\]

We make the following crucial observations.
\begin{obs}\label{rmk:remk}\ 
\begin{enumerate}[label=(\alph*)]
\item \label{rmk:remk-a}
If $h$ is a concave polyhedral function nonidentically $-\infty$,
then $\face(x,h)$ is the face of the hypograph of $h$, which is
obtained as the intersection of this hypograph
with a supporting hyperplane of outer normal vector $(x,1)$.
In particular, this face is non-vertical, and hence it is a proper face of $\hypo(h)$, \textit{i.e.} $\face(x,h) \subsetneq \hypo(h)$.
\item \label{rmk:remk-b} If $h$ is the concavification of the coefficient map $\omega$ as in \Cref{expl:poly}, then $\cell(x,h)$ is the convex hull of
\[ \cell(x,\omega)= \argmax_{\alpha \in \Z^n} (f_\alpha + \<x, \alpha> )\enspace ,\]
which coincides with the intersection of $\cell(x,h)$ with the elements
$\alpha\in  \supp(f)$ such that $h(\alpha)=f_\alpha$.
Then, $\face(x,h)$ is the convex hull of $\face(x,\omega)$.
We also have that $\face(x,h)$ is the convex hull of its intersection
with the set of vertices of $\hypo(h)$.

\item \label{rmk:remk-c}
Moreover, when $h$ is the concavification of the coefficient map $\omega$, if $F$ is a non-vertical face of $\hypo(h)$, then $F = \face(x,h)$ if and only if $(x,1)$ is in the relative interior of $N_F(\hypo(h))$. In particular, if $F$ is a facet of $\hypo(h)$, then there exists a unique vector $x \in \R^n$ such that $F = \face(x,h)$ and $x$ is in the vector space directing the affine hull of $Q := \supp(h)$. Indeed, from \Cref{cor:dico}, we have $\dim(N_F(\hypo(h)) \cap W) = 1$ where $W = V \times \R$ is the vector space directing the affine hull of $\hypo(h)$, and thus if $(x,1)$ and $(x',1)$ are both in the half-line $N_F(\hypo(h)) \cap W$, then it follows that $x' = x$.
\end{enumerate}
\end{obs}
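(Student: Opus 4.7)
The plan is to handle the three parts separately. Each reduces quickly to a standard polyhedral argument, with \Cref{cor:dico} entering only in part (c). For \emph{part (a)} I would argue via a supporting-hyperplane computation. The key identity is
\[
\max_{q \in \R^n}\bigl(\langle q, x\rangle + h(q)\bigr) \;=\; \max_{(q,t) \in \hypo(h)}\bigl(\langle q, x\rangle + t\bigr),
\]
valid because for each fixed $q$ the inner supremum over $t \leq h(q)$ is attained at $t = h(q)$. Calling this common value $M$, the hyperplane $\{(q,t) : \langle q,x\rangle + t = M\}$ supports $\hypo(h)$ from above with outer normal $(x,1)$, and cuts out exactly $\face(x,h)$. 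The face is non-vertical because $(0,\dots,0,1) \cdot (x,1) = 1 \neq 0$, so $(0,\dots,0,1)$ cannot lie in the affine direction of $\face(x,h)$; properness then follows from the verticality of $\hypo(h)$ itself.

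For \emph{part (b)} I would use the standard formula expressing $h$ on $\conv(\supp(f))$ as the supremum of the convex combinations $\sum_i \lambda_i f_{\alpha_i}$ of the values of $\omega$ at points of $\supp(f)$, with $h(q) = -\infty$ outside. Inserting this into $\langle q,x\rangle + h(q)$ and bounding each convex combination by its largest summand yields $\max_q(\langle q,x\rangle + h(q)) \leq \max_\alpha(\langle \alpha,x\rangle + \omega(\alpha))$, and the reverse inequality is immediate since $h \geq \omega$. A short squeeze argument then forces $h(\alpha) = f_\alpha$ for every $\alpha \in \cell(x,\omega)$, so each such $\alpha$ lies on the upper envelope of $\hypo(h)$. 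Since $h$ is affine on the cell $\cell(x,h)$ and coincides there with the supporting affine form $q \mapsto M - \langle q,x\rangle$, the equalities $\cell(x,h) = \conv(\cell(x,\omega))$ and $\face(x,h) = \conv(\face(x,\omega))$ drop out. For the vertex representation, I note that when $\supp(f)$ is finite the recession cone of $\hypo(h)$ is $\{(0,-s) : s \geq 0\}$, which meets any non-vertical face only at the origin; hence $\face(x,h)$ is bounded, and therefore the convex hull of the vertices of $\hypo(h)$ that it contains.

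For \emph{part (c)} the equivalence is the usual face/normal-cone duality: membership $(x,1) \in N_F(\hypo(h))$ is just the statement $F \subseteq \face(x,h)$, and membership in the relative interior upgrades this to equality. For the facet statement I apply \Cref{cor:dico} to $P = \hypo(h)$, whose affine hull is directed by $W = V \times \R$, obtaining $\dim(N_F(\hypo(h)) \cap W) = \dim \hypo(h) - \dim F = 1$. Hence $N_F(\hypo(h)) \cap W$ is a closed half-line through the origin; since $F$ is non-vertical, this half-line is not horizontal, so it meets the affine hyperplane of last coordinate $1$ in exactly one point $(x,1)$, necessarily with $x \in V$, which gives the desired uniqueness. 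The main obstacle is the bookkeeping in part (b), where one must verify both that the concavification introduces no spurious maximizers and that every maximizer of $\omega$ already lies on the upper envelope; all the rest is a direct unpacking of definitions together with \Cref{cor:dico}.
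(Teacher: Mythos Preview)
Your proposal is correct. The paper treats this statement as an observation without a separate proof: parts (a) and (b) are left as immediate consequences of the definitions, and the only justification given is the sentence embedded in part (c) itself, which invokes \Cref{cor:dico} exactly as you do. Your write-up therefore matches the paper's approach where there is one, and supplies compatible routine detail where the paper gives none.
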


We now state a useful lemma on convex polyhedra.

\begin{lem}
Let $P_1, \ldots, P_\ell$ be a finite collection of convex polyhedra, and denote by $P$ their Minkowski sum $P_1 + \cdots + P_\ell$. Let $F$ be a face of $P$ and let $y$ be in the relative interior of $N_F(P)$. If $p = p_1 + \cdots + p_\ell \in F$ with $p_i \in P_i$ for all $1 \leq i \leq \ell$, then for all $1 \leq i \leq \ell$, $y \in N_{p_i}(P_i)$, \textit{i.e.} $p_i \in \argmax_{p'_i \in P_i} \<p'_i, y>$.
\end{lem}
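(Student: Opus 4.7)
The plan is to use the standard Minkowski-additivity of support functions: a point maximizes a linear form over a Minkowski sum if and only if each summand maximizes the same form over the corresponding polyhedron.

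First I would translate the hypothesis on $y$ into an optimality statement. By definition, $N_F(P)=N_x(P)$ for $x$ in the relative interior of $F$, so $y\in N_F(P)$ means $\<y,z-x>\leq 0$ for all $z\in P$, i.e.\ $x\in \argmax_{z\in P}\<z,y>$. When $y$ lies in the relative interior of $N_F(P)$, the exposed face $\argmax_{z\in P}\<z,y>$ is exactly $F$, so in fact \emph{every} point of $F$ (in particular the given $p$) maximizes $z\mapsto \<z,y>$ over $P$.

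Next I would introduce, for each $i\in\{1,\dots,\ell\}$, some $q_i\in \argmax_{z_i\in P_i}\<z_i,y>$ (which exists since $P_i$ is a polyhedron and the maximum over $P$ is finite), and set $q:=q_1+\cdots+q_\ell\in P$. Then the chain of inequalities
\[
\<p,y>=\sum_{i=1}^\ell \<p_i,y>\;\leq\;\sum_{i=1}^\ell \<q_i,y>=\<q,y>\;\leq\;\<p,y>,
\]
where the first inequality uses $q_i\in\argmax_{P_i}\<\cdot,y>$ and the second uses the optimality of $p$ over $P$ established in the previous step, forces equality throughout. Consequently $\<p_i,y>=\<q_i,y>=\max_{z_i\in P_i}\<z_i,y>$ for every $i$, which is exactly the statement $p_i\in\argmax_{p'_i\in P_i}\<p'_i,y>$, equivalently $y\in N_{p_i}(P_i)$.

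There is no real obstacle here: the only subtle point is passing from ``$y$ in the relative interior of $N_F(P)$'' to ``$p$ maximizes $\<\cdot,y>$ over $P$'', which I would justify by recalling the correspondence between relative interiors of normal cones and exposed faces noted in \Cref{rmk:remk}\ref{rmk:remk-c}. The remainder is a one-line application of additivity of linear functionals over a Minkowski sum.
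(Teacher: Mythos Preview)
Your argument is correct and is essentially the same as the paper's: both pass from ``$y$ in the relative interior of $N_F(P)$'' to ``$p$ maximizes $\langle\cdot,y\rangle$ over $P$'', then use additivity of the linear form over the Minkowski sum to force each $p_i$ to be optimal on $P_i$. The paper writes this as a single chain of equalities $\langle p,y\rangle=\max_{P}\langle\cdot,y\rangle=\sum_i\max_{P_i}\langle\cdot,y\rangle$, while you insert the auxiliary maximizers $q_i$, but the content is identical. One small remark: your citation of \Cref{rmk:remk}\ref{rmk:remk-c} is slightly off, since that observation is stated in the special setting of hypographs of concavified coefficient maps; the general fact you need (that $y\in\relint N_F(P)$ iff $F=\argmax_{z\in P}\langle z,y\rangle$) is what the paper simply asserts at the start of its proof.
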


\begin{proof}
Saying that $y$ is in the relative interior of $N_F(P)$ is equivalent to saying that $F = \argmax_{p' \in P} \<p', y>$. Therefore, you have
\[
\begin{array}{rcl}
\<p_1, y> + \cdots + \<p_\ell, y> &=& \<p, y>\\
&=& \max_{p' \in P} \<p', y>\\
&=& \max_{p'_1 \in P_1, \ldots, p'_\ell \in P_\ell} \left( \<p'_1, y> + \cdots + \<p'_\ell, y> \right)\\
&=& \max_{p'_1 \in P_1} \<p'_1, y> + \cdots + \max_{p'_\ell \in P_\ell} \<p'_\ell, y> \enspace ,
\end{array}
\]
therefore $\<p_i, y> = \max_{p'_i \in P_i} \<p'_i, y>$ for all $1 \leq i \leq \ell$, \textit{i.e.} $y \in N_{p_i}(P_i)$ for all $1 \leq i \leq \ell$. 
\end{proof}

\begin{rmk}
In particular, from the previous lemma, by setting $F_i := \argmax_{p'_i \in P_i} \<p'_i, y>$, we obtain that the decomposition of $F$ as a sum of faces of the $P_i$ is precisely $F = F_1 + \cdots + F_\ell$.
\end{rmk}

The following result is a direct corollary of the previous lemma.

\begin{cor}\label{cor:faces}
Let $h_1,\dots,h_\ell$ denote concave non-identically $-\infty$ polyhedral functions from $\R^n$ to $\R \cup \{ -\infty \}$. Consider the sup-convolution
\[
h = h_1 \supco \cdots \supco h_\ell
\enspace .
\]
Let $F$ be a non-vertical face of $\hypo(h)$, and let $x \in \R^n$ be such that $(x,1)$ is in the relative interior of the normal cone of $\hypo(h)$ at this face. Consider a point $(q,\lambda) \in F$ such that
\[
(q,\lambda) = (q_1, \lambda_1) + \cdots + (q_\ell, \lambda_\ell) \quad \textrm{with} \quad (q_i, \lambda_i) \in \hypo(h_i) \textrm{ for all } 1 \leq i \leq \ell \enspace .
\]
Then for all $1 \leq i \leq \ell$, $(q_i, \lambda_i) \in \face(x,h_i)$.

In particular,
\[
F=\face(x,h) =\face(x,h_1)+\dots+\face(x,h_\ell)
\]
and moreover, for all $1\leq i\leq \ell$, $(x,1)$ is in the normal cone of $\hypo(h_i)$ at point $(q_i, \lambda_i)$.
\end{cor}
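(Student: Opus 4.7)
The plan is to apply the preceding Minkowski-sum lemma in the product space $\R^n \times \R$, taking the hypographs as the polyhedra. Since each $h_i$ is polyhedral, each $\hypo(h_i)$ is a closed polyhedron, and Property~\ref{prt:conv}(c) gives $\hypo(h) = \hypo(h_1) + \cdots + \hypo(h_\ell)$ as a Minkowski sum. The direction vector $(x,1) \in \R^n \times \R$ will play the role of the vector $y$ in the normal cone.

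With this identification, the preceding lemma applied with $P_i = \hypo(h_i)$, $P = \hypo(h)$, $y = (x,1)$, $p = (q,\lambda)$, and $p_i = (q_i, \lambda_i)$ directly yields $(x,1) \in N_{(q_i,\lambda_i)}(\hypo(h_i))$ for every $i \in \{1, \ldots, \ell\}$; this is precisely the final assertion of the corollary. To translate it into $(q_i, \lambda_i) \in \face(x, h_i)$, I would observe that $(x,1)$ being in the normal cone of $\hypo(h_i)$ at $(q_i, \lambda_i)$ means that $(q_i, \lambda_i)$ maximizes the linear functional $(q', t) \mapsto \<q',x> + t$ over $\hypo(h_i)$. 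Since this maximum coincides with $\max_{q' \in \R^n}(\<q',x> + h_i(q'))$ by the definition of the hypograph, and is attained precisely on the set $\{(q', h_i(q')) : q' \in \cell(x, h_i)\} = \face(x, h_i)$, the conclusion follows.

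For the ``in particular'' equalities, the identity $F = \face(x, h)$ follows from item~(a) of \Cref{rmk:remk}, which identifies the face of $\hypo(h)$ of outer normal $(x,1)$ with $\face(x,h)$, together with the assumption that $(x,1)$ lies in the relative interior of $N_F(\hypo(h))$. The decomposition $\face(x,h) = \face(x,h_1) + \cdots + \face(x,h_\ell)$ is then obtained in two directions: the inclusion $\subseteq$ follows from the first part of the corollary applied to any decomposition of a point of $F$ into a sum of points of the $\hypo(h_i)$, whose existence is guaranteed by Property~\ref{prt:conv}(c); the reverse inclusion follows because summing points that maximize the same linear functional on each $\hypo(h_i)$ yields a maximizer on the Minkowski sum $\hypo(h)$. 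I do not anticipate a serious obstacle here: the corollary is essentially a repackaging of the preceding Minkowski-sum lemma in the special case of hypographs, combined with the elementary dictionary between non-vertical faces of $\hypo(h_i)$ of outer normal $(x,1)$ and the sets $\face(x, h_i)$.
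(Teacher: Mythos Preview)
Your proposal is correct and follows essentially the same approach as the paper: apply the preceding Minkowski-sum lemma with $P_i=\hypo(h_i)$ and $y=(x,1)$ to obtain $(x,1)\in N_{(q_i,\lambda_i)}(\hypo(h_i))$, then translate this into $(q_i,\lambda_i)\in\face(x,h_i)$ via the identification of non-vertical faces with $\face(x,\cdot)$. The paper's proof is terser (it dismisses the ``in particular'' part as immediate), but you have simply spelled out the two inclusions and the dictionary between normal-cone membership and $\face(x,h_i)$ that the paper leaves implicit.
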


\begin{proof}
We apply the previous lemma for $P_i = \hypo(h_i)$ for all $1 \leq i \leq \ell$ and $P = \hypo(h)$. Since $(x,1)$ is in the relative interior of $N_F(P)$, then we have $F = \face(x,h)$ by definition, and moreover from the previous lemma, we have $(x,1) \in N_{(q_i, \lambda_i)}(P_i)$, or equivalently $(q_i, \lambda_i) \in \face(x,h_i)$ for all $1 \leq i \leq \ell$. The remainder follows immediately.
\end{proof}

\subsection{Proving the Tropical Nullstellensatz}

\subsubsection{The Canny-Emiris construction}\label{sec:caem}

In the following, we assume given the polynomials $f_1,\ldots,f_k$, a nonempty Canny-Emiris subset $\calE$ associated to the system $f$, and a subset $\calE'$ of $\Z^n$ containing $\calE$. We can now describe the construction of Canny and Emiris from \cite{CE93} and \cite{Emi05}, which was generalized by Sturmfels in part 3 of \cite{Stu94}, which we will apply to the system $f = (f_1, \ldots, f_k)$ in the particular case where the polynomials $f_i$ do not share a common root, in order to prove \Cref{thm:nullstellensatz}. In particular, contrary to the previous constructions, we do not assume that the coefficients satisfy any genericity condition. This construction is illustrated on an example in \Cref{expl:canny-emiris-construction} below.

First of all, let us settle some notations for the rest of this section. For all $1 \leq i \leq k$, let $\calA_i$ denote the support of $f_i$,  $Q_i$ the convex hull of $\calA_i$, $Q := Q_1 + \cdots + Q_k$. The Canny-Emiris subset $\calE$  of $\Z^n$ is such that $\calE:= \Z^n \cap (Q + \delta)$, for some generic vector $\delta \in V + \Z^n$, where $V \subseteq \R^n$ is the vector space directing the affine hull of $Q$.
Let $\omega = (\omega_i)_{1 \leq i \leq k}$ be the collection of coefficient maps of the $f_i$, that is defined by
\[
\omega_i(\alpha) = \left\{\begin{array}{ll}
f_{i,\alpha} & \textrm{if} \quad \alpha \in \calA_i\\
\zero & \textrm{else},
\end{array}\right.
\]
and let $h_i : \R^n \to \R \cup \{ -\infty \}$ denote the concave hull of $\omega_i$. Then we can consider the following liftings of the polytopes $Q, Q_1, \ldots, Q_k$. Let $Q^\mathsf{lift}_i := \hypo(h_i)$ (this is refered to by Grigoriev and Podolskii in \cite{GP18} as the \textit{extended Newton polytope} of $f_i$) and $Q^\mathsf{lift} := Q^\mathsf{lift}_1 + \cdots + Q^\mathsf{lift}_k = \hypo(h)$ with $h := h_1 \supco \cdots \supco h_k$.

\begin{obs}\label{obs:ess}
Note that for all $1 \leq i \leq k$, by construction of the maps $h_i$, we have
\[
h_i(\alpha) \geq f_{i,\alpha} \quad \forall \alpha \in \calA_i \enspace ,
\]
with equality whenever the monomial $f_{i,\alpha} X^\alpha$ is \textit{essential} in $f_i$, that is whenever there exists a point $x \in \R^n$ at which the monomial of exponent $\alpha$ is the only one achieving the maximum over all the monomials of $f_i$. Equivalently,
the monomial $f_{i,\alpha}X^\alpha$ of $f_i$ is essential if, and only if,
the point $(\alpha,f_{i,\alpha})$ is a vertex of $\hypo(h_i)$.
Note also that if $(q,h(q))$ is an extreme point of $Q^\mathsf{lift}$, then $h(q)$ corresponds to the coefficient of $X^q$ in the product $f_1 \cdots f_k$.
\end{obs}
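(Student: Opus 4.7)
The plan is to verify the three assertions of \Cref{obs:ess} in turn: first the pointwise inequality $h_i(\alpha)\geq f_{i,\alpha}$ on $\calA_i$, then the equivalence between essentiality and vertex-hood of $(\alpha, f_{i,\alpha})$ in $\hypo(h_i)$, and finally the identification of $h(q)$ with the coefficient of $X^q$ in $f_1\cdots f_k$ at extreme points of $Q^\mathsf{lift}$.

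The first claim is immediate: by definition $h_i$ is the concave envelope of $\omega_i$, so $h_i\geq\omega_i$ pointwise, and restricting to $\alpha\in\calA_i$, where $\omega_i(\alpha)=f_{i,\alpha}$, yields $h_i(\alpha)\geq f_{i,\alpha}$. For the equivalence in the second claim, I would argue as follows. If the monomial $f_{i,\alpha}X^\alpha$ is essential, there exists $x\in\R^n$ for which $f_{i,\alpha}+\langle x,\alpha\rangle$ strictly exceeds $f_{i,\beta}+\langle x,\beta\rangle$ for every other $\beta\in\calA_i$. The affine functional $(q,t)\mapsto t-\langle x,q\rangle$ is then uniquely maximized over the finite set $\{(\beta, f_{i,\beta}):\beta\in\calA_i\}$ at $(\alpha, f_{i,\alpha})$. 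Since $\hypo(h_i)$ is the convex hull of $\hypo(\omega_i)$ together with the downward direction, and the functional above is maximized on a non-vertical face, that face reduces to the single point $(\alpha, f_{i,\alpha})$, proving vertex-hood; equality $h_i(\alpha)=f_{i,\alpha}$ follows. Conversely, if $(\alpha, f_{i,\alpha})$ is a vertex of $\hypo(h_i)$, some supporting hyperplane meets $\hypo(h_i)$ only at this point; because $\hypo(h_i)$ extends indefinitely downward, the hyperplane is non-vertical, hence admits an outer normal of the form $(x,1)$, and the uniqueness of its maximizer restricted to $\hypo(\omega_i)$ gives back the essentiality condition.

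For the third claim, fix $q$ such that $(q, h(q))$ is an extreme point of $Q^\mathsf{lift}=\hypo(h)$, and choose $x\in\R^n$ such that $(x,1)$ lies in the relative interior of the normal cone at the zero-dimensional face $F=\{(q, h(q))\}$; non-verticality follows from the remark just made. By \Cref{prt:conv}(b) there is a decomposition $q=q_1+\cdots+q_k$ with $h(q)=\sum_i h_i(q_i)$, lifting to $(q, h(q))=\sum_i (q_i, h_i(q_i))$ inside the Minkowski sum of the hypographs. Applying \Cref{cor:faces} to $F$ gives $F=\face(x, h_1)+\cdots+\face(x, h_k)$; since a Minkowski sum of nonempty convex sets that reduces to a single point forces each summand to be a single point, each $\face(x, h_i)=\{(q_i, h_i(q_i))\}$ is a vertex of $\hypo(h_i)$. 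By the equivalence of the second claim, $q_i\in\calA_i$ and $h_i(q_i)=f_{i,q_i}$, hence $h(q)=\sum_i f_{i,q_i}\leq \max_{q=\sum q'_i,\,q'_i\in\calA_i}\sum_i f_{i,q'_i}$, which is exactly the coefficient of $X^q$ in $f_1\cdots f_k$. The reverse inequality is automatic from the first claim and \Cref{prt:conv}(b): any decomposition $q=\sum q'_i$ with $q'_i\in\calA_i$ satisfies $\sum f_{i,q'_i}\leq \sum h_i(q'_i)\leq h(q)$.

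The hard part will be the last step, where one must decompose the vertex $(q, h(q))$ as a sum of vertices of the individual hypographs $\hypo(h_i)$. This hinges on the elementary observation that a Minkowski sum of nonempty convex sets being a singleton forces each summand to be a singleton, combined with the already-proved \Cref{cor:faces}, which channels the decomposition through the relative interior of a single normal cone.
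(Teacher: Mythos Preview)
Your proposal is correct. In the paper this statement is recorded as an Observation without proof; the authors treat all three assertions as routine consequences of the definitions of $h_i$, $h$, and $\hypo$. Your write-up supplies precisely the verifications they leave implicit, and it does so along the expected lines: the inequality from the definition of the concave envelope, the equivalence via a non-vertical supporting hyperplane at $(\alpha,f_{i,\alpha})$, and the product-coefficient identity by decomposing the vertex through \Cref{cor:faces} and using that a Minkowski sum equal to a singleton forces each summand to be a singleton. The only point worth tightening is the sentence ``choose $x\in\R^n$ such that $(x,1)$ lies in the relative interior of the normal cone'': you might make explicit that the normal cone at a vertex of $\hypo(h)$ is full-dimensional and contained in the half-space $\{y_{n+1}\geq 0\}$ (because the recession cone of $\hypo(h)$ contains $-e_{n+1}$), so its interior meets $\{y_{n+1}>0\}$ and a rescaling produces the desired $(x,1)$. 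With that small clarification, nothing is missing.
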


Now let us apply the Canny-Emiris construction, to the collection $\omega$ of maps --- note that as opposed to what is done in \cite{Stu94}, $\omega$ is given and might not be generic. The projection of $Q^\mathsf{lift}$ onto $Q$ induces a mixed coherent subdivision $\Delta_{\omega}$ of $Q$, given by the points of non-differentiability of $h$.
The following observation follows readily from the genericity of $\delta$.
\begin{obs}\label{obs:construct-x}
Given $p \in \calE$, the couple $(p-\delta,h(p-\delta))$ lies in the relative interior of a unique non-vertical facet $F$ of $Q^\mathsf{lift}$, or equivalently, $p-\delta$ lies in the relative interior of the associated cell $C$ of $Q$, obtained as the image of $F$ by the projection mapping. Moreover, there is a unique $x \in V$ such that $F = \face(x, h)$ and $C = \cell(x,h)$.
\end{obs}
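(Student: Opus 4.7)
The plan is to combine the genericity of the perturbation $\delta \in V + \Z^n$ with the normal-cone description of non-vertical facets of $\hypo(h)$ already recorded in \Cref{rmk:remk}.

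First, I would observe that the vertical projection $\pi\colon (q,t)\mapsto q$ restricts to a bijection between non-vertical facets of $Q^\mathsf{lift}=\hypo(h)$ and maximal cells of the mixed subdivision $\Delta_\omega$ of $Q$, sending relative interiors to relative interiors: each vertical fiber meets $\hypo(h)$ in a segment whose upper endpoint is the unique point of the fiber lying on the graph $\{(q,h(q)) : q\in Q\}$. This renders the two formulations in the statement (about the facet $F$ and about the cell $C=\pi(F)$) literally equivalent, so it suffices to show that $p-\delta$ lies in the relative interior of a unique maximal cell $C$ of $\Delta_\omega$, and then to take $F$ to be the non-vertical facet of $\hypo(h)$ lying above $C$.

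For existence and uniqueness of $C$, I would spell out what genericity of $\delta$ means. The relative boundary of the maximal cells of $\Delta_\omega$ is a finite union of rational polyhedra of dimension strictly less than $\dim V$, contained in the affine hull of $Q$. For each lattice point $p\in\Z^n$ and each such lower-dimensional cell $S$, the condition $p-\delta\in S$ cuts out a translate $p-S$, which is an affine subset of $V+\Z^n$ of dimension strictly less than $\dim V$. The assumption that $\delta$ is generic in $V+\Z^n$ amounts to choosing $\delta$ outside the (locally finite) union of these translates; for such $\delta$, every $p\in\calE$ satisfies the desired property, so exactly one maximal cell of $\Delta_\omega$ contains $p-\delta$ in its relative interior.

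Finally, the uniqueness of $x\in V$ with $F=\face(x,h)$ follows directly from part (c) of \Cref{rmk:remk}: since $F$ is a non-vertical facet of $\hypo(h)$, the intersection of the normal cone $N_F(\hypo(h))$ with $V\times\R$ is a half-line (by \Cref{cor:dico}), hence contains exactly one element of the form $(x,1)$ with $x\in V$. The identity $C=\cell(x,h)$ is then immediate from the definitions of $\face$ and $\cell$ together with the projection bijection $C=\pi(F)$. I do not anticipate any serious obstacle here; the one point deserving care is a clean statement of the genericity condition on $\delta$, which is implicit in the Canny--Emiris construction.
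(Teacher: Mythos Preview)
Your proposal is correct and essentially fills in the details that the paper leaves implicit: the paper simply states that this observation ``follows readily from the genericity of $\delta$'' without further argument, and your steps (projection bijection between non-vertical facets and maximal cells, avoiding the lower-dimensional skeleton via genericity, and invoking \Cref{rmk:remk}\ref{rmk:remk-c} for the uniqueness of $x\in V$) are exactly the intended justification.
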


We construct a matrix $\mac_{\mathcal{EE}'}$, whose rows are indexed by $\calE$ and whose columns are indexed by $\calE'$. We proceed as follows. First, for $p \in \calE$, consider the associated $x \in V \subseteq \R^n$ and $F$ as above, and for all $1 \leq i \leq k$, let $F_i$ denote the face $\face(x, h_i)$ of $Q^\mathsf{lift}_i$, and $C_i$ the cell $\cell(x,h_i)$ of the associated subdivision of $Q_i$. Then by \Cref{cor:faces}, we have
\[
F = F_1 + \cdots + F_k \quad \textrm{and} \quad C = C_1 + \cdots + C_k \enspace .
\]

Moreover, under the assumption that the tropical polynomials $f_1, \ldots, f_k$ do not share a common root, we know that at least one of the $C_i$ is a singleton, and we let $j$ be the maximal index such that $C_j = \{ a_j \}$ is a singleton. We call the couple $(j, a_j)$ the \textit{row content} of $p$.
Note that this row content only depends on the cell $C$ containing $p - \delta$ in its interior. Then since $p - \delta \in C$, thus it can be written as
\begin{equation}\label{eqn:qsum}
p - \delta = q_1 + \cdots + q_j + \cdots + q_k \quad \textrm{with} \quad \left\{ \begin{array}{rcl}
q_i & \in & C_i \quad \textrm{for all} \quad 1 \leq i \neq j \leq k\\
q_j & = & a_j \enspace .
\end{array}\right.
\end{equation}
We then construct the matrix $\mac_{\mathcal{EE}'}$ as follows : for every $p \in \calE$, we associate to $p$ its row content $(j, a_j)$, and then we put the row $(j, p-a_j)$ of the Canny-Emiris submatrix $\mac_{\calE'}$ in $\mac_{\mathcal{EE}'}$, whose coefficients are given by the coefficients of the polynomial $X^{p-a_j}f_j(X)$. Note that all exponents appearing in the support of $X^{p-a_j}f_j(X)$ belong to $\calE$ and thus to $\calE'$. Indeed, if $a'_j \in \calA_j$, then
\[
\begin{array}{rcl}
p-a_j+a'_j &=& \delta + q_1 + \cdots + q_j + \cdots + q_k - a_j + a'_j\\
&=& \delta + q_1 + \cdots + a'_j + \cdots + q_k \in (Q+\delta) \cap \Z^n = \calE \subseteq \calE' \enspace .
\end{array}
\]
Thus, we end up with a matrix $\mac_{\mathcal{EE}'} = \left(m_{pp'}\right)_{(p,p') \in \calE \times \calE'}$ indexed by $\calE \times \calE'$. We shall show in the proof of \Cref{thm:nullstellensatz} that this matrix $\mac_{\mathcal{EE}'}$ is actually a submatrix of $\mac_{\calE'}$ --- it might not be the case because the rows selected by the row content are \textit{a priori} not necessarily distinct. Also notice that by grouping together the columns of $\mac_{\mathcal{EE}'}$ indexed by $\calE$ and by $\calE' \setminus \calE$, the matrix $\mac_{\mathcal{EE}'}$ can be written as a block matrix
\[
\mac_{\mathcal{EE}'} = \begin{pmatrix} \mac_{\mathcal{EE}} & \zero \end{pmatrix}
\]
where $\mac_{\mathcal{EE}} = (m_{pp'})_{(p,p') \in \calE \times \calE}$ is a square matrix indexed by $\calE \times \calE$, which we shall show is a submatrix of $\mac_{\calE}$.

\subsubsection{The proof of \Cref{thm:nullstellensatz}}\label{sec:proof-nullstellensatz}

In order to prove \Cref{thm:nullstellensatz}, we will make use of the following lemma.

\begin{lem}\label{lem:ddiag}
Consider $h$ and $h_1, \ldots, h_k$ as defined as in \Cref{sec:caem}, and moreover for all $1 \leq j \leq k$, set $\widehat{h}_j = h_1 \supco \cdots \supco h_{j-1} \supco h_{j+1} \supco \cdots \supco h_k$. Let $p \in \calE$ and let $(j, a_j)$ be its row content. Then for all $p' \in \calE$ and $a'_j \in \Z^n$ such that $p' = p - a_j + a'_j$, we have
\begin{equation}
h(p'-\delta) \geq h_j(a'_j) + \widehat{h}_j(p-\delta-a_j),
\label{eqn:ndiag}
\end{equation}
with equality if and only if $p'=p$ and $a'_j = a_j$.
\end{lem}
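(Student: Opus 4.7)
My plan is to prove the inequality first and then each direction of the equality case separately. The inequality~\eqref{eqn:ndiag} follows immediately from \Cref{prt:conv}(d), which asserts $h = h_j \supco \widehat{h}_j$: evaluating the sup-convolution at $p' - \delta$ via the specific decomposition $p' - \delta = a'_j + (p - \delta - a_j)$ yields $h(p'-\delta) \geq h_j(a'_j) + \widehat{h}_j(p-\delta-a_j)$ at once.

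For the ``$\Leftarrow$'' direction (where $p' = p$ and hence $a'_j = a_j$), I would apply \Cref{cor:faces} at the point $(p - \delta, h(p-\delta))$, which lies in the relative interior of the facet $F = \face(x, h)$ of $\hypo(h)$ by \Cref{obs:construct-x}. That corollary gives $F = \face(x, h_j) + \face(x, \widehat{h}_j)$; since $C_j = \{a_j\}$ by the definition of the row content, $\face(x, h_j) = \{(a_j, h_j(a_j))\}$ is a single vertex, so any decomposition of $(p-\delta, h(p-\delta))$ as a sum of points on the graphs of $h_j$ and $\widehat{h}_j$ must have its $h_j$-component equal to $(a_j, h_j(a_j))$. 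This yields $h(p-\delta) = h_j(a_j) + \widehat{h}_j(p-\delta-a_j)$, the desired equality.

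For the harder ``$\Rightarrow$'' direction, assume the equality holds. Then the identity $(p'-\delta, h(p'-\delta)) = (a'_j, h_j(a'_j)) + (p-\delta-a_j, \widehat{h}_j(p-\delta-a_j))$ expresses a point on the graph of $h$ as a sum of points on the graphs of $h_j$ and $\widehat{h}_j$. Let $F'$ denote the facet of $\hypo(h)$ containing this sum point in its relative interior, with outer normal $(x', 1)$ (uniquely determined up to scaling, since $F'$ is a facet). A standard support-hyperplane argument---the functional $\langle (x',1),\cdot\rangle$ on $\hypo(h) = \hypo(h_j) + \hypo(\widehat{h}_j)$ splits as the sum of its values on each summand, and since its maximum is attained at the sum point, each summand attains its componentwise maximum---forces $(p-\delta-a_j, \widehat{h}_j(p-\delta-a_j)) \in \face((x',1), \hypo(\widehat{h}_j))$. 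Because this point already lies in the relative interior of $\face(x, \widehat{h}_j)$, we deduce $\face(x, \widehat{h}_j) \subseteq \face((x',1), \hypo(\widehat{h}_j))$, i.e.\ $(x',1) \in N_{\face(x,\widehat{h}_j)}(\hypo(\widehat{h}_j))$.

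The final step is to check that $\face(x, \widehat{h}_j)$ is a facet of $\hypo(\widehat{h}_j)$, via a dimension count. Since $(p-\delta, h(p-\delta))$ is in the relative interior of the facet $F$ of $\hypo(h)$, the cell $C = \cell(x, h)$ has $\dim C = \dim Q$; decomposing $C = \{a_j\} + \widehat{C}$ with $C_j = \{a_j\}$ a singleton forces $\dim \widehat{C} = \dim Q$, and combining $\widehat{C} \subseteq \sum_{i \neq j} Q_i$ with the evident $\dim \sum_{i \neq j} Q_i \leq \dim Q$ gives $\dim \sum_{i \neq j} Q_i = \dim Q$. Hence $\dim \face(x, \widehat{h}_j) = \dim F = \dim \hypo(h) - 1 = \dim \hypo(\widehat{h}_j) - 1$, so $\face(x, \widehat{h}_j)$ is indeed a facet of $\hypo(\widehat{h}_j)$. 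Its normal cone is therefore a one-dimensional ray containing both $(x, 1)$ and $(x', 1)$, and, sharing the last coordinate $1$, these two vectors must coincide; thus $F' = F$, and by the genericity of $\delta$ (under which distinct elements of $\calE$ correspond to distinct facets of $\hypo(h)$) we obtain $p' = p$, whence $a'_j = a_j$. The main obstacle is the support-hyperplane bookkeeping together with the dimension count ensuring $\face(x, \widehat{h}_j)$ is a facet; once these are secured, the one-dimensionality of its normal cone combined with genericity delivers uniqueness.
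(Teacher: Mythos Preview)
Your overall architecture mirrors the paper's proof closely: the sup-convolution bound for the inequality, \Cref{cor:faces} for the equality case at $p'=p$, and for the converse the support-hyperplane splitting followed by the identification of $\face(x,\widehat{h}_j)$ as a facet so that its normal direction is pinned down. That part is fine.

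There is, however, a genuine error in your final sentence. The claim ``by the genericity of $\delta$, distinct elements of $\calE$ correspond to distinct facets of $\hypo(h)$'' is false: a single maximal cell of the subdivision can contain several lattice points of $(Q+\delta)\cap\Z^n$ (take e.g.\ $n=1$, $f_1=0\oplus 0x^3$, $f_2=0\oplus 1x$, $\delta=0.1$: then $p=2,3,4$ all land in the same cell $[1,4]$). So from $F'=F$ you cannot conclude $p'=p$. The paper closes the argument differently, and in fact you already have the missing ingredient in hand: your support-hyperplane splitting shows not only that $(p-\delta-a_j,\widehat{h}_j(p-\delta-a_j))\in\face(x',\widehat{h}_j)$ but equally that $(a'_j,h_j(a'_j))\in\face(x',h_j)$. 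Once $x'=x$, the latter lies in $\face(x,h_j)=\{(a_j,h_j(a_j))\}$, forcing $a'_j=a_j$ and hence $p'=p$. Replace your genericity step by this.

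A smaller point: your assertion that the normal cone to the facet $\face(x,\widehat{h}_j)$ is ``a one-dimensional ray'' is only correct when $\hypo(\widehat{h}_j)$ is full-dimensional in $\R^{n+1}$. In general (cf.\ \Cref{cor:dico}) it is the intersection of that normal cone with $W=V\times\R$ that is one-dimensional; you then need $x,x'\in V$ (which is how the paper chooses them, via \Cref{obs:construct-x} and \Cref{rmk:remk}\ref{rmk:remk-c}) to conclude $(x,1)=(x',1)$.
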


\begin{proof}
  Since $p' - \delta = a'_j + (p - \delta - a_j) $,
  the inequality~\eqref{eqn:ndiag} follows from the definition of sup-convolution, noting that $h=h_j \supco \widehat{h}_j$.

We next show that the equality holds if $p'=p$, which entails that $a'_j=a_j$. Indeed, since
\[
(p-\delta, h(p-\delta)) \in Q^\mathsf{lift} = \hypo(h_j) + \hypo(\widehat{h}_j) \enspace ,
\]
we can write 
\begin{equation}\label{eqn:faces}
(p-\delta, h(p-\delta)) = (q_j, \lambda_j) + (\widehat{q}_j, \widehat{\lambda}_j)
\end{equation}
with $(q_j, \lambda_j) \in \hypo(h_j)$ and $(\widehat{q}_j, \widehat{\lambda}_j) \in \hypo(\widehat{h}_j)$. Then by \Cref{cor:faces}, defining $x$ as in~\Cref{obs:construct-x},
it follows that $(q_j, \lambda_j) \in \face(x,h_j)$. In particular, since $(j,a_j)$ is the row content of $p$, we have $\face(x,h_j) = \{ (a_j, h_j(a_j)) \}$, hence,
\[
(q_j, \lambda_j) = (a_j, h_j(a_j)) \enspace .
\]
\Cref{obs:construct-x} also entails that $(\widehat{q}_j, \widehat{\lambda}_j) \in \face(x,\widehat{h}_j)$
and therefore that $(\widehat{q}_j, \widehat{\lambda}_j) = (\widehat{q}_j, \widehat{h}_j(\widehat{q}_j))$. Moreover, since $\widehat{q}_j = p-\delta-q_j = p-\delta-a_j$, it follows that
\[
(\widehat{q}_j, \widehat{\lambda}_j) = (p-\delta-a_j, \widehat{h}_j(p-\delta-a_j)) \enspace .
\]
Hence, we deduce from (\ref{eqn:faces}) that
\begin{equation}\label{eqn:diag}
h(p-\delta) = h_j(a_j) + \widehat{h}_j (p - \delta - a_j) \enspace .
\end{equation}

Keeping the same $p$ as above, we now show that (\ref{eqn:ndiag}) is strict whenever $p' \neq p$. Indeed, assume that the equality is achieved. Then this implies that
\[
(p'-\delta, h(p'-\delta)) = (a'_j, h_j(a'_j)) + (p - \delta - a_j, \widehat{h}_j(p - \delta - a_j)) \enspace .
\]
Now consider $x' \in \R^n$ such that $F' = \face(x',h)$ is the facet in the interior of which $(p'-\delta, h(p'-\delta))$ lies. Then from \Cref{cor:faces}, we have
\[
(a'_j, h_j(a'_j)) \in \face(x', h_j) \quad \textrm{and} \quad (p - \delta - a_j, \widehat{h}_j(p - \delta - a_j)) \in \face(x', \widehat{h}_j) \enspace .
\]
However, we also know from equality (\ref{eqn:diag}) and \Cref{cor:faces} that
\[
(p - \delta - a_j, \widehat{h}_j(p - \delta - a_j)) \in \face(x, \widehat{h}_j) \enspace ,
\]
and since $\face(x, h_j) = \{ (a_j, h_j(a_j)) \}$ is a singleton, then $\face(x, \widehat{h}_j)$ is simply a translation of $\face(x, h)$, and moreover since $(p-\delta, h(p-\delta))$ is in the relative interior of $\face(x, h)$, then it means that $(p - \delta - a_j, \widehat{h}_j(p - \delta - a_j))$ is in the relative interior of $\face(x, \widehat{h}_j)$, and that $\face(x, \widehat{h}_j)$ is a facet of $\widehat{Q}^\mathsf{lift}_j := \hypo(\widehat{h}_j)$. Therefore, since $(p-\delta-a_j, \widehat{h}_j(p-\delta-a_j))$ is in both $\face(x, \widehat{h}_j)$ and $\face(x', \widehat{h}_j)$ and since it is in particular in the relative interior of the first face, we deduce that
\[
\face(x, \widehat{h}_j) \subseteq \face(x', \widehat{h}_j) \ \enspace .
\]
Since by \Cref{rmk:remk} (a), $\face(x', \widehat{h}_j)$ is a proper face of $\widehat{Q}^\mathsf{lift}_j$, then it implies that it is also a facet of $\widehat{Q}^\mathsf{lift}_j$, and thus that
\[
\face(x, \widehat{h}_j) = \face(x', \widehat{h}_j) \enspace .
\]
Hence, it follows from \Cref{rmk:remk} (b) that $x = x'$, and therefore $(a'_j,h_j(a'_j)) \in \face(x, h_j) = \{ (a_j,h_j(a_j)) \}$, hence $a'_j = a_j$ and thus $p' = p$.
\end{proof}

Using \Cref{lem:ddiag}, we are now all set to give a proof of \Cref{thm:nullstellensatz}.

\begin{proof}[Proof of \Cref{thm:nullstellensatz}]
The `only if' implication is straightforward, for if $x$ is a common root of the system $f$, then its image by the Veronese embedding below
\begin{equation}\label{eq:veronese}
\ver : \left\{\begin{array}{rcl}
\R^n & \longto & \R^{\calE'}\\
x & \longmapsto & \ver(x) = (x^{\nu})_{\nu \in \calE'}
\end{array}\right.
\end{equation}
yields a finite vector in the right null space of the matrix $\mac_{\calE'}$. Note that by convention, this also holds when $\mac_{\calE'}$ is empty.

For the converse implication, we rather show the contrapositive. Assume that the tropical polynomial system $f$ does not have a solution, and consider the matrix $\mac_{\mathcal{EE}'} = (m_{pp'})_{(p,p') \in \calE \times \calE'}$ obtained from the Canny-Emiris construction described in \Cref{sec:caem}.
Note that this construction implies that the matrix $\mac_{\mathcal{EE}}$ and \textit{a fortiori}
the matrix $\mac_{\calE}$ are nonempty, leading to a contradiction when $\mac_{\calE}$ is empty, thus proving the `if' implication in that case.
We now assume that $\mac_{\calE}$ and thus $\mac_{\calE'}$ are nonempty.

Let
\[
\widetilde{\mac}_{\mathcal{EE}'} = \left(\widetilde{m}_{pp'}\right)_{(p,p') \in \calE \times \calE'} \quad \textrm{with} \quad \widetilde{m}_{pp'} = m_{pp'} - h(p'-\delta) \enspace .
\]
Similarly to $\mac_{\mathcal{EE}'}$, this matrix can also be written as a block matrix in the following way:
\[
\widetilde{\mac}_{\mathcal{EE}'} = \begin{pmatrix} \widetilde{\mac}_{\mathcal{EE}} & \zero \end{pmatrix}
\]
where $\widetilde{\mac}_{\mathcal{EE}} = (\widetilde{m}_{pp'})_{(p,p') \in \calE \times \calE}$ is a square matrix indexed by $\calE \times \calE$. We show that the tropical matrix $\widetilde{\mac}_{\mathcal{EE}}$ is diagonally dominant. Indeed, for all $p, p' \in \calE$, and let $(j, a_j)$ be the row content of $p$. Then, we have for $a'_j \in \Z^n$ such that $p' = p - a_j + a'_j$
\[
m_{pp'} = \left\{\begin{array}{ll}
f_{j,a'_j} & \textrm{if} \quad a'_j \in \calA_j\\
\zero & \textrm{otherwise},
\end{array}\right.
\]
or in other words, $m_{pp'} = \omega_j(a'_j)$.
In particular, for $p'=p$, we have
\[
\widetilde{m}_{pp} = f_{j,a_j} - h(p-\delta) \enspace .
\]
Moreover, by assumption, the monomial $f_{j,a_j} X^{a_j}$ of $f_j$ is essential. By \Cref{obs:ess}, we have therefore $h_j(a_j) = f_{j,a_j}$, hence we obtain from the equality case of \Cref{lem:ddiag} that
\[
\widetilde{m}_{pp} = -\widehat{h}_j(p - \delta - a_j) \enspace .
\]
Now for the case of $p' \neq p$, again since $h_j(a'_j) \geq f_{j,a'_j}$, then by rearranging the inequality of \Cref{lem:ddiag}, we obtain that
\[
\widetilde{m}_{pp} = -\widehat{h}_j(p - \delta - a_j) \geq f_{j,a'_j} - h(p' - \delta) = \widetilde{m}_{pp'} \enspace ,
\]
with equality if and only if $p' = p$.
Thus, the matrix $\widetilde{\mac}_{\mathcal{EE}}$ is tropically diagonally dominant. In particular, it follows that any two rows of $\widetilde{\mac}_{\mathcal{EE}'}$ are distinct, and thus it is also true for $\mac_{\mathcal{EE}'}$ because $\widetilde{\mac}_{\mathcal{EE}'}$ was obtained from $\mac_{\mathcal{EE}'}$ by adding the same vector to all of its rows. Therefore, it means that the matrix $\mac_{\mathcal{EE}'}$ is indeed a submatrix of $\mac_{\calE'}$ --- and likewise the rows of $\mac_{\mathcal{EE}}$ are pairwise distinct, thus $\mac_{\mathcal{EE}}$ is a (square) submatrix of $\mac_{\calE}$.

Finally, since $\widetilde{\mac}_\mathcal{EE}$ is tropically diagonally dominant, by \Cref{lem:ddns}, it also is nonsingular, and thus applying \Cref{lem:nsg1} to $\mac_{\mathcal{EE}}$, we obtain that $\mac_{\mathcal{EE}}$ is also nonsingular. Therefore, the matrix $\mac_{\calE'}$ has the following form
\[
\mac_{\calE'} = \begin{pmatrix}
\mac_{\mathcal{EE}} & \zero\\
\ast & \ast
\end{pmatrix}
\]
and thus \Cref{lem:nsg2} entails that the equation $\mac_{\calE'} \tdot y \bal \zero$ has no solution $y$ in $\R^{\calE'}$, \textit{i.e.} there cannot exist a finite vector in the tropical right null space of $\mac_{\calE'}$.
\end{proof}

\begin{rmk}
Notice that in the case where $\calE' = \calE$, following the same proof, you obtain a weaker condition on the right null space of $\mac_{\calE}$ in order to find a finite solution of the system $f$. More precisely, you can show that there exists a solution $x \in \R^n$ to the system $f$ if and only if there exists a vector $y \in \T^{\calE} \setminus \{ \zero \}$ such that $\mac_{\calE} \tdot y \bal \zero$. In other words, in the case where $\calE' = \calE$, the existence of a nonzero vector in the right null space of $\mac_{\calE}$, even possibly with some coordinates equal to $\zero$, is enough to guarantee the existence of a finite solution of the system $f$.
\end{rmk}

\begin{expl}\label{expl:canny-emiris-construction}
We illustrate the use of the Canny-Emiris construction in our proof by applying it to system
\[ (\mathcal{S}_1) : \left\{\begin{array}{rcl}
f_1 &=& 1 \tplus 2x_1 \tplus 1x_2 \tplus 1x_1x_2\\
f_2 &=& 0 \tplus 0x_1 \tplus 1x_2\\
f_3 &=& 2x_1 \tplus 0x_2 \enspace ,
\end{array}\right. \]
which was shown in \Cref{expl:system-nullstellensatz} not to have any solution. First, we obtain a subdivision of $Q$ by projecting the non-vertical faces of the Minkowski sum of the hypographs of $h_1$, $h_2$ and $h_3$ onto the horizontal hyperplane $\R^n \times \{ 0 \}$ as shown on \Cref{fig:proj}.
\begin{figure}[H]
\centering
\begin{tikzpicture}[x={(0.8cm,0.25cm)},y={(-0.4cm,0.4cm)},z={(0cm,0.9cm)},scale=0.7]
\draw[->] (-3,-3,-3) -- node[fill=white,scale=0.7] {$x_1$} ++ (2,0,0);
\draw[->] (-3,-3,-3) -- node[fill=white,,scale=0.7] {$x_2$} ++ (0,2,0);
\draw[->] (-3,-3,-3) -- node[fill=white,scale=0.7] {$h(x)$} ++ (0,0,2);
\draw [line width=0.5pt,color=color1] (0,0,1) -- (0,1,1);
\draw [line width=0.5pt,color=color1] (0,0,1) -- (1,0,2);
\draw [line width=0.5pt,color=color1] (0,1,1) -- (1,0,2);
\draw [line width=0.5pt,color=color1,densely dashed] (0,1,1) -- (1,1,1);
\draw [line width=0.5pt,color=color1,densely dashed] (1,0,2) -- (1,1,1);
\draw [line width=0.5pt,color=color1,loosely dotted] (0,0,1) -- (0,0,-1);
\draw [line width=0.5pt,color=color1,loosely dotted] (0,1,1) -- (0,1,-1);
\draw [line width=0.5pt,color=color1,loosely dotted] (1,0,2) -- (1,0,-1);
\draw [line width=0.5pt,color=color1,loosely dotted] (1,1,1) -- (1,1,-1);
\draw [line width=0.2pt,draw=color1,fill=color1,fill opacity=0.1] (0,0,-1) -- (0,1,-1) -- (1,0,-1) -- cycle;
\draw [line width=0.2pt,draw=color1,fill=color1,fill opacity=0.1] (1,1,-1) -- (0,1,-1) -- (1,0,-1) -- cycle;
\draw[color=color1] (0.5,0.5,-3) node {$Q_1$};
\draw[color=color1] (0.5,0.5,3) node {$Q^\mathsf{lift}_1$};

\draw [line width=0.5pt,color=color2] (3,0,0) -- (3,1,1);
\draw [line width=0.5pt,color=color2] (3,0,0) -- (4,0,0);
\draw [line width=0.5pt,color=color2] (3,1,1) -- (4,0,0);
\draw [line width=0.5pt,color=color2,loosely dotted] (3,0,0) -- (3,0,-1);
\draw [line width=0.5pt,color=color2,loosely dotted] (3,1,1) -- (3,1,-1);
\draw [line width=0.5pt,color=color2,loosely dotted] (4,0,0) -- (4,0,-1);
\draw [line width=0.2pt,draw=color2,fill=color2,fill opacity=0.1] (3,0,-1) -- (3,1,-1) -- (4,0,-1) -- cycle;
\draw[color=color2] (3.5,0.5,-3) node {$Q_2$};
\draw[color=color2] (3.5,0.5,3) node {$Q^\mathsf{lift}_2$};

\draw [line width=0.5pt,color=color3] (6,1,0) -- (7,0,2);
\draw [line width=0.5pt,color=color3,loosely dotted] (6,1,0) -- (6,1,-1);
\draw [line width=0.5pt,color=color3,loosely dotted] (7,0,2) -- (7,0,-1);
\draw [line width=0.2pt,color=color3] (6,1,-1) -- (7,0,-1);
\draw[color=color3] (6.5,0.5,-3) node {$Q_3$};
\draw[color=color3] (6.5,0.5,3) node {$Q^\mathsf{lift}_3$};

\draw [line width=0.5pt,color=black] (9,1,1) -- (9,2,2);
\draw [line width=0.5pt,color=black] (9,2,2) -- (10,1,4);
\draw [line width=0.5pt,color=black] (10,1,4) -- (10,0,3);
\draw [line width=0.5pt,color=black] (10,0,3) -- (9,1,1);
\draw [line width=0.5pt,color=black] (9,2,2) -- (9,3,2);
\draw [line width=0.5pt,color=black] (9,3,2) -- (10,2,4);
\draw [line width=0.5pt,color=black] (10,2,4) -- (10,1,4);
\draw [line width=0.5pt,color=black] (10,1,4) -- (9,2,2);
\draw [line width=0.5pt,color=black] (10,2,4) -- (9,3,2);
\draw [line width=0.5pt,color=black,densely dashed] (9,3,2) -- (10,3,2);
\draw [line width=0.5pt,color=black,densely dashed] (10,3,2) -- (11,2,4);
\draw [line width=0.5pt,color=black,densely dashed] (11,2,4) -- (10,2,4);
\draw [line width=0.5pt,color=black,densely dashed] (11,2,4) -- (12,1,3);
\draw [line width=0.5pt,color=black,densely dashed] (12,1,3) -- (12,0,4);
\draw [line width=0.5pt,color=black] (12,0,4) -- (11,1,5);
\draw [line width=0.5pt,color=black,densely dashed] (11,1,5) -- (11,2,4);
\draw [line width=0.5pt,color=black] (10,0,3) -- (11,0,4);
\draw [line width=0.5pt,color=black] (11,0,4) -- (11,1,5);
\draw [line width=0.5pt,color=black] (11,1,5) -- (10,1,4);
\draw [line width=0.5pt,color=black] (10,1,4) -- (10,0,3);
\draw [line width=0.5pt,color=black] (11,0,4) -- (12,0,4);
\draw [line width=0.5pt,color=black] (10,2,4) -- (11,1,5);
\draw [line width=0.5pt,color=black,loosely dotted] (9,1,1) -- (9,1,-1);
\draw [line width=0.5pt,color=black,loosely dotted] (9,2,2) -- (9,2,-1);
\draw [line width=0.5pt,color=black,loosely dotted] (9,3,2) -- (9,3,-1);
\draw [line width=0.5pt,color=black,loosely dotted] (10,1,4) -- (10,1,-1);
\draw [line width=0.5pt,color=black,loosely dotted] (10,2,4) -- (10,2,-1);
\draw [line width=0.5pt,color=black,loosely dotted] (10,3,2) -- (10,3,-1);
\draw [line width=0.5pt,color=black,loosely dotted] (11,2,4) -- (11,2,-1);
\draw [line width=0.5pt,color=black,loosely dotted] (12,1,3) -- (12,1,-1);
\draw [line width=0.5pt,color=black,loosely dotted] (12,0,4) -- (12,0,-1);
\draw [line width=0.5pt,color=black,loosely dotted] (11,0,4) -- (11,0,-1);
\draw [line width=0.5pt,color=black,loosely dotted] (10,0,3) -- (10,0,-1);
\draw [line width=0.5pt,color=black,loosely dotted] (11,1,5) -- (11,1,-1);
\draw[line width=0.2pt,fill=color1,fill opacity=0.1] (9,1,-1) -- (9,2,-1) -- (10,1,-1) -- (10,0,-1) -- cycle;
\draw[line width=0.2pt,fill=color2,fill opacity=0.1] (9,2,-1) -- (9,3,-1) -- (10,2,-1) -- (10,1,-1) -- cycle;
\draw[line width=0.2pt,fill=color2,fill opacity=0.1] (9,3,-1) -- (10,3,-1) -- (11,2,-1) -- (10,2,-1) -- cycle;
\draw[line width=0.2pt,fill=color3,fill opacity=0.1] (11,2,-1) -- (12,1,-1) -- (12,0,-1) -- (11,1,-1) -- cycle;
\draw[line width=0.2pt,fill=color3,fill opacity=0.1] (10,0,-1) -- (10,1,-1) -- (11,1,-1) -- (11,0,-1) -- cycle;
\draw [line width=0.2pt,color=black] (11,0,-1) -- (12,0,-1);
\draw [line width=0.2pt,color=black] (10,2,-1) -- (11,1,-1);
\draw[color=black] (11.5,-0.5,-3) node {$Q = \textcolor{color1}{Q_1} + \textcolor{color2}{Q_2} + \textcolor{color3}{Q_3}$};
\draw[color=black] (11.5,-0.5,7) node {$Q^\mathsf{lift} = \textcolor{color1}{Q^\mathsf{lift}_1} + \textcolor{color2}{Q^\mathsf{lift}_2} + \textcolor{color3}{Q^\mathsf{lift}_3}$};
\end{tikzpicture}
\caption{The subdivision of $Q$ arises from the projection of the Minkowski sum of the hypographs of the $h_i$.}
\label{fig:proj}
\end{figure}
With this subdivision, we associate to every point $p \in \calE$ its row content $i, a_i$, which is univocally determined by the maximal-dimensional cell of the decomposition of $Q$ to which $p-\delta$ belongs. This process is illustrated in \Cref{fig:rowc}.

More precisely, in the following table, for each point $p$ of $\calE$ in the first row:
\begin{itemize}[label=$\diamond$]
\item the second row displays the monomial $x^p$ which corresponds to a column of the Macaulay matrix,
\item the third row displays the row content $i, a_i$ of $p$,
\item the fourth row displays the polynomial $x^{p-a_i}f_i$ which corresponds to a row of the Macaulay matrix,
\item and finally the last row displays the tropical scaling factor $h(p-\delta)$ which must be substracted (in the usual sense) to the column $p$ of the matrix $\mac^{(1)}_{\mathcal{EE}}$ in order to obtain the matrix $\widetilde{\mac}^{(1)}_{\mathcal{EE}}$.
\end{itemize}

\begin{figure}[H]
\centering
\begin{tabular}{|c||c c c c c c|}
\hline
$p \in \calE$ & $(0,0)$ & $(1,0)$ & $(0,1)$ & $(2,0)$ & $(1,1)$ & $(0,2)$\\
\hline
$x^p$ & $1$ & $x_1$ & $x_2$ & $x_1^2$ & $x_1x_2$ & $x_2^2$\\
\hline
$i, a_i$ & $1,(0,0)$ & $3,(1,0)$ & $2,(0,1)$ & $3,(1,0)$ & $3,(1,0)$ & $2,(0,1)$\\
\hline
$x^{p-a_i}f_i$ & $f_1$ & $f_3$ & $f_2$ & $x_1f_3$ & $x_2f_3$ & $x_2f_2$\\
\hline
$h(p-\delta)$ & $3.6$ & $4.8$ & $3.8$ & $3.3$ & $4.1$ & $2.2$\\
\hline
\end{tabular}
\end{figure}

\begin{figure}[h]
\centering
\begin{tikzpicture}[line cap=round,line join=round,>=triangle 45,x=1cm,y=1cm,scale=1.5]
\clip(-1,-1) rectangle (4,4);
\fill[line width=1pt,color=black,fill=color1,fill opacity=0.15] (0,2) -- (1,1) -- (1,0) -- (0,1) -- cycle;
\fill[line width=1pt,color=black,fill=color2,fill opacity=0.15] (0,3) -- (1,2) -- (1,1) -- (0,2) -- cycle;
\fill[line width=1pt,color=black,fill=color2,fill opacity=0.15] (0,3) -- (1,3) -- (2,2) -- (1,2) -- cycle;
\fill[line width=1pt,color=black,fill=color3,fill opacity=0.15] (2,2) -- (2,1) -- (3,0) -- (3,1) -- cycle;
\fill[line width=1pt,color=black,fill=color3,fill opacity=0.15] (1,1) -- (2,1) -- (2,0) -- (1,0) -- cycle;
\draw [line width=0.5pt] (1,2) -- (2,1);
\draw [line width=0.5pt] (1,0) -- (0,1);
\draw [line width=0.5pt] (0,1) -- (0,3);
\draw [line width=0.5pt] (0,3) -- (1,3);
\draw [line width=0.5pt] (1,3) -- (3,1);
\draw [line width=0.5pt] (3,1) -- (3,0);
\draw [line width=0.5pt] (3,0) -- (1,0);
\draw [line width=1pt] (0,2) -- (1,1);
\draw [line width=1pt] (1,1) -- (1,0);
\draw [line width=1pt] (1,0) -- (0,1);
\draw [line width=1pt] (0,1) -- (0,2);
\draw [line width=1pt] (0,3) -- (1,2);
\draw [line width=1pt] (1,2) -- (1,1);
\draw [line width=1pt] (1,1) -- (0,2);
\draw [line width=1pt] (0,2) -- (0,3);
\draw [line width=1pt] (0,3) -- (1,3);
\draw [line width=1pt] (1,3) -- (2,2);
\draw [line width=1pt] (2,2) -- (1,2);
\draw [line width=1pt] (1,2) -- (0,3);
\draw [line width=1pt] (2,2) -- (2,1);
\draw [line width=1pt] (2,1) -- (3,0);
\draw [line width=1pt] (3,0) -- (3,1);
\draw [line width=1pt] (3,1) -- (2,2);
\draw [line width=1pt] (1,1) -- (2,1);
\draw [line width=1pt] (2,1) -- (2,0);
\draw [line width=1pt] (2,0) -- (1,0);
\draw [line width=1pt] (1,0) -- (1,1);

\begin{scriptsize}
\draw [fill=black] (0.9,0.9) circle (1pt);
\draw[color=black] (0.65,0.75) node {$1,(0,0)$};
\draw [fill=black] (0.9,1.9) circle (1pt);
\draw[color=black] (0.65,1.75) node {$2,(0,1)$};
\draw [fill=black] (0.9,2.9) circle (1pt);
\draw[color=black] (0.65,2.75) node {$2,(0,1)$};
\draw [fill=black] (1.9,0.9) circle (1pt);
\draw[color=black] (1.65,0.75) node {$3,(1,0)$};
\draw [fill=black] (1.9,1.9) circle (1pt);
\draw[color=black] (1.65,1.75) node {$3,(1,0)$};
\draw [fill=black] (2.9,0.9) circle (1pt);
\draw[color=black] (2.65,0.75) node {$3,(1,0)$};
\end{scriptsize}

\draw (-0.1,-0.1) circle (1pt);
\draw (-0.1,0.9) circle (1pt);
\draw (-0.1,1.9) circle (1pt);
\draw (-0.1,2.9) circle (1pt);
\draw (0.9,-0.1) circle (1pt);
\draw (1.9,-0.1) circle (1pt);
\draw (1.9,2.9) circle (1pt);
\draw (2.9,-0.1) circle (1pt);
\draw (2.9,1.9) circle (1pt);
\draw (2.9,2.9) circle (1pt);
\end{tikzpicture}
\caption{The polytope $Q+\delta$, with the integer points inside the maximal dimensional cells of the decomposition of $Q+\delta$ labelled by the row content the cell they belong to.}
\label{fig:rowc}
\end{figure}

With the information from the previous table, we obtain the following $6 \times 6$ square submatrix $\mac^{(1)}_{\mathcal{EE}}$ of $\mac^{(1)}_{\mathcal{E}}$
\[
\mac^{(1)}_{\mathcal{EE}} = \begin{blockarray}{*{7}{c}}
& 1 & x_1 & x_2 & x_1^2 & x_1x_2 & x_2^2\\
\begin{block}{c(*{6}{c})}
f_1 & 1 & 2 & 1 & & 1 &\\
f_3 & & 2 & 0 & & &\\
f_2 & 0 & 0 & 1 & & &\\
x_1f_3 & & & & 2 & 0 &\\
x_2f_3 & & & & & 2 & 0\\
x_2f_2 & & & 0 & & 0 & 1\\
\end{block}
\end{blockarray} \enspace ,
\]
and after applying the tropical scaling of factor $h(p-\delta)$ to the column $p$ of the previous matrix for all $p \in \calE$, we obtain the following matrix, in which we highlighted the diagonal coefficients by writing them in bold
\[
\widetilde{\mac}^{(1)}_{\mathcal{EE}} = \begin{blockarray}{*{7}{c}}
& 1 & x_1 & x_2 & x_1^2 & x_1x_2 & x_2^2\\
\begin{block}{c(*{6}{c})}
f_1 & \boldsymbol{-2.6} & -2.8 & -2.8 & & -3.1 &\\
f_3 & & \boldsymbol{-2.8} & -3.8 & & &\\
f_2 & -3.6 & -4.8 & \boldsymbol{-2.8} & & &\\
x_1f_3 & & & & \boldsymbol{-1.3} & -4.1 &\\
x_2f_3 & & & & & \boldsymbol{-2.1} & -2.2\\
x_2f_2 & & & -3.8 & & -4.1 & \boldsymbol{-1.2}\\
\end{block}
\end{blockarray} \enspace .
\]
We finally observe that the matrix $\widetilde{\mac}^{(1)}_{\mathcal{EE}}$ is tropically diagonally dominant, and therefore it follows from Lemmas \ref{lem:ddns} and \ref{lem:nsg1} that $\mac^{(1)}_{\mathcal{EE}}$ is also nonsingular\footnote{Alternatively, one can verify that $\tdet(\mac^{(1)}_{\mathcal{EE}}) \nbal \zero$ if $\tdet$ denotes the tropical determinant defined in \Cref{rmk:tdet}.}, thus there is no solution $y \in \R^{\calE}$ to the equation $\mac^{(1)}_{\calE} \tdot y \bal \zero$.

Besides, note that by choosing a different ordering of the polynomials, in which $f_3$ comes before $f_2$, then the row content of $(1,1)$ would be $2,(1,0)$ and thus the fifth row of $\mac^{(1)}_{\mathcal{EE}}$ would change to be
\[
\begin{blockarray}{*{7}{c}}
& 1 & x_1 & x_2 & x_1^2 & x_1x_2 & x_2^2\\
\begin{block}{c(*{6}{c})}
x_1f_2 & & 0 & & 0 & 1 &\\
\end{block}
\end{blockarray}
\]
and we can still check likewise that the right null space of this matrix is simply equal to $\{ \zero \}$.
\end{expl}

\section{The Tropical Positivstellensatz for two-sided systems}

\subsection{Statement of the theorem}\label{sec:pos}

Now we formulate a result similar to \Cref{thm:nullstellensatz}, for systems mixing equalities and weak and strict inequalities between tropical polynomial functions (also called `two sided' systems). More precisely, consider $f^+ = (f_1^+, \ldots, f_k^+)$ and $f^- = (f_1^-, \ldots, f_k^-)$ two collections of $k$ tropical polynomials. For $1 \leq i \leq k$, we denote by $\calA_i^+$ and $\calA_i^-$ respectively the supports of $f_i^+$ and $f_i^-$, and set $\calA_i = \calA_i^+ \cup \calA_i^-$, and $\calA = (\calA_1, \ldots, \calA_k)$.
Equivalently, $\calA$ is the collection of supports of the polynomials $f_i := f^+_i \tplus f^-_i$.

Consider also a collection $\rhd=(\rhd_1,\ldots, \rhd_k)$ of relations $\rhd_i \in \{ \geq, =, > \}$, for $1\leq i\leq k$. We denote by $f^+ (x)\rhd f^-(x)$ the system
\begin{align}
\max_{\alpha \in \calA_i^+} \left( f_{i,\alpha}^+ + \<\alpha, x> \right) \rhd_i \max_{\alpha \in \calA_i^-} \left( f_{i,\alpha}^- + \<\alpha, x> \right) \, \textrm{for all} \; 1 \leq i \leq k ,
\label{eq:ineq}
\end{align}
of unknown $x \in \T^n$. 
Note that this is equivalent to the expressions $f^+_i(x) \rhd_i f^-_i(x)$, $i = 1, \ldots, k$. Finally, we denote by $\mac^+$ and $\mac^-$ the Macaulay matrices associated to $f^+$ and $f^-$ respectively, so with entries $\mac^{\pm}_{(i,\alpha), \beta}=f^{\pm}_{i,\beta-\alpha}$. Then, for any subset $\calE$ of $\Z^n$, we denote by $\mac^+_{\calE}$ and $\mac^-_{\calE}$ the submatrices associated to $\calE$ and the collection $\calA$ defined above, that is $\mac^+_{\calE}=(\mac^+)^{\calA}_{\calE}$ and $\mac^-_{\calE}=(\mac^-)^{\calA}_{\calE}$, and we likewise set for $N \in \N$, $\mac^+_N = (\mac^+)^{\calA}_N$ and $\mac^-_N = (\mac^-)^{\calA}_N$.

Finally, we set for $1 \leq i \leq k$:
\[
r_i = \left\{\begin{array}{ll}
\dim(\aff(\calA^-_i)) + 1 & \textrm{if } \rhd_i \in \{ \geq, > \}\\
\max\left(\dim(\aff(\calA^-_i)), \dim(\aff(\calA^+_i))\right) + 1 & \textrm{if } \rhd_i \in \{ = \} \enspace .
\end{array}\right.
\]

Notice that in particular, this definition implies the following inequality
\[
r_i \leq \left\{\begin{array}{ll}
\min(\vert \calA^-_i \vert, n+1) & \textrm{if } \rhd_i \in \{ \geq, > \}\\
\min(\max(\vert \calA^-_i \vert, \vert \calA^+_i \vert), n+1) & \textrm{if } \rhd_i \in \{ = \} \enspace .
\end{array}\right.
\]

We now call \textit{Canny-Emiris subsets} of $\Z^n$ associated to the system $f^+ \rhd f^-$ any set $\calE$ of the form
\[
\calE := \left( \widetilde{Q} + \delta \right) \cap \Z^n \quad \textrm{with} \quad \widetilde{Q} = r_1 Q_1 + \cdots + r_k Q_k \enspace ,
\]
where $Q_i = \conv(\calA_i)$ for $1 \leq i \leq k$, and $\delta$ is a generic vector in $V + \Z^n$, with $V$ the direction of the affine hull of $\widetilde{Q}$ (note that this is the same as for $Q_1 + \cdots + Q_k$). Finally, for any subset $\calE'$ of $\Z^n$ containing a Canny-Emiris subset $\calE$ associated to $f^+ \rhd f^-$ and $y \in \R^{\calE'}$, we denote by $\mac^+_{\calE'} \tdot y \rhd \mac^-_{\calE'} \tdot y$ the following system of tropical linear equalities:
\[
\max_{\beta \in \calE'} \left( m^+_{(i,\alpha), \beta} + y_\beta \right) \rhd_i \max_{\beta \in \calE'} \left( m^-_{(i,\alpha), \beta} + y_\beta \right) \quad \textrm{for all} \quad 1 \leq i \leq k \quad \textrm{and} \quad \alpha \in \calA_i \enspace .
\]

We now state a Positivstellensatz for the case of tropical polynomial systems allowing both weak and strict inequalities, and equalities.

\begin{thm}[Sparse tropical Positivstellensatz]
\label{thm:positivstellensatz}
There exists a solution $x \in \R^n$ to the system $f^+ \rhd f^-$ if and only if there exists a vector $y \in \R^{\calE'}$ satisfying $\mac_{\calE'}^+ \tdot y \rhd \mac_{\calE'}^- \tdot y$, where $\calE'$ is any subset of $\Z^n$ containing a nonempty Canny-Emiris subset $\calE$ of $\Z^n$ associated to the system $f^+ \rhd f^-$.
\end{thm}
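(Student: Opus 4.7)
The plan is to mirror the structure of the proof of \Cref{thm:nullstellensatz}, but with additional bookkeeping to handle the mixture of $=$, $\geq$ and $>$ relations.

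The ``only if'' direction proceeds via the Veronese embedding $\ver\colon \R^n \to \R^{\calE'}$, $x\mapsto (x^\nu)_{\nu\in\calE'}$. A direct computation shows that, for any row index $(i,\alpha)$ with $\alpha+\calA_i\subseteq\calE'$,
\[
\bigl(\mac^{\pm}_{\calE'}\tdot\ver(x)\bigr)_{(i,\alpha)} = \max_{\gamma\in\calA^{\pm}_i}\bigl(f^{\pm}_{i,\gamma}+\<\alpha+\gamma,x>\bigr) = \<\alpha,x>+f^{\pm}_i(x)\enspace .
\]
Since $\<\alpha,x>$ is finite, adding it to both sides preserves $=$, $\geq$ and $>$, so any $x\in\R^n$ satisfying $f^+\rhd f^-$ yields a finite $\ver(x)\in\R^{\calE'}$ that witnesses $\mac^+_{\calE'}\tdot \ver(x)\rhd \mac^-_{\calE'}\tdot \ver(x)$.

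For the ``if'' direction I would argue by contraposition and build an obstruction to the linearized system by adapting the Canny-Emiris construction of \Cref{sec:caem} to the inflated Newton polytope $\widetilde{Q}=\sum_i r_i Q_i$. Assuming that $f^+\rhd f^-$ has no solution in $\R^n$, the idea is to drive the mixed subdivision of $\widetilde{Q}$ by the concave hulls of the coefficient maps of the merged polynomials $f_i=f^+_i\tplus f^-_i$, apply the row-content mechanism of \Cref{obs:construct-x} and \Cref{cor:faces} to each $p\in\calE$, and then exhibit a square submatrix that is tropically diagonally dominant after suitable rescaling, exactly as in the proof of \Cref{thm:nullstellensatz}. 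The subtlety is that the rows selected for this certificate must be drawn from the sides $\mac^+_{\calE'}$ or $\mac^-_{\calE'}$ in a way that obstructs the prescribed relations $\rhd_i$, not merely the plain balance relation $\bal\zero$. This is where the multipliers $r_i$ come in: working inside $\widetilde{Q}=\sum_i r_i Q_i$ rather than $Q=\sum_i Q_i$ provides the slack needed to invoke the Shapley-Folkman theorem. Specifically, whenever a maximal cell of the subdivision of $\widetilde{Q}$ induced by $h=h_1^{\supco r_1}\supco\cdots\supco h_k^{\supco r_k}$ yields, for some index $j$, a ``singleton'' summand in the $i$-th block, the Shapley-Folkman bound guaranteed by $r_i\geq\dim\aff(\calA_i^{-})+1$ (respectively $r_i\geq\max(\dim\aff\calA_i^{-},\dim\aff\calA_i^{+})+1$ in the equality case) forces that singleton to be a vertex of the correct support $\calA_i^{-}$ (respectively $\calA_i^{+}\cup\calA_i^{-}$). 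This makes the row $(j,p-a_j)$ available on the side dictated by $\rhd_j$, and the analogue of \Cref{lem:ddiag} together with \Cref{lem:ddns,lem:nsg1,lem:nsg2} then shows that no finite $y$ can satisfy $\mac^+_{\calE'}\tdot y\rhd\mac^-_{\calE'}\tdot y$.

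The main obstacle I expect is the Shapley-Folkman step: verifying that the prescribed values of $r_i$ are exactly large enough to force the row-content to land on the correct side in every maximal cell, uniformly over the generic perturbation $\delta$, and without losing the strict diagonal dominance that the rescaling $\widetilde{m}_{pp'}=m_{pp'}-h(p'-\delta)$ produces. A related delicate point is the handling of strict inequalities: on the rows indexed by a relation $\rhd_i$ equal to $>$, the obstruction has to witness that the $+$-side entry cannot strictly exceed the $-$-side entry, so one must preserve a strict form of diagonal dominance there, and check that this is compatible simultaneously with the weaker diagonal dominance required by the $\geq$ and $=$ rows.
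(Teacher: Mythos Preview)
Your overall strategy is correct and matches the paper's approach closely. Two points deserve sharpening, and one of them would actually cause the argument to fail as stated.

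The row-content mechanism is not ``find an index whose block already yields a singleton summand''. In the two-sided setting no $C_j=\cell(x,h_j)$ need be a singleton. Rather, for the $x\in V$ attached to the facet of $\hypo(\widetilde h)$ containing $(p-\delta,\widetilde h(p-\delta))$, you first select the (maximal) index $j$ at which the relation $f_j^+(x)\rhd_j f_j^-(x)$ \emph{fails}; this failure forces $\cell(x,\omega_j)\subseteq\calA_j^-$ (or $\subseteq\calA_j^+$ when $\rhd_j$ is $=$), hence $\dim C_j\leq r_j-1$. Only then does Shapley--Folkman, applied to the $r_j$-fold sum $r_jF_j$, produce a decomposition in which one summand is a vertex $a_j$ of $C_j$, automatically lying in the correct support. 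Shapley--Folkman \emph{creates} the vertex; it does not locate a pre-existing singleton cell.

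Your plan to handle $>$ directly is where the argument would break. If $\rhd_j$ is $>$ and fails, you only get $f_j^+(x)\leq f_j^-(x)$; at a point where equality holds, $\cell(x,\omega_j)$ can meet both $\calA_j^+$ and $\calA_j^-$, so the bound $\dim C_j\leq r_j-1$ need not hold and the Shapley--Folkman step collapses. The paper sidesteps this by first reducing every $>$ to $\geq$ via a parameter $\lambda>\unit$: replace each strict row $f_i^+(x)>f_i^-(x)$ by $f_i^+(x)\geq\lambda\tdot f_i^-(x)$, and observe that the original system (resp.\ linearization) is solvable iff the transformed one is for some $\lambda>\unit$. After this reduction only $\geq$ and $=$ remain, and the failure of either is a \emph{strict} inequality, so the dimension bound always holds. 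Finally, \Cref{lem:ddns,lem:nsg1,lem:nsg2} concern the relation $\bal\zero$ and do not obstruct $\mac^+_{\calE'}\tdot y\rhd\mac^-_{\calE'}\tdot y$; you need their two-sided analogues (a pair $A,B$ with $b_{ii}>a_{ij}$ for all $i,j$ forces $A\tdot y\geq B\tdot y\Rightarrow y=\zero$, a row-swapped variant covering $=$, and the corresponding column-scaling and block-triangular lemmas), which the paper states and proves separately.
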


\begin{rmk}\label{rmk:positivstellensatz}
The following example illustrates how the present tropical Positivstellensatz can be applied to the problem of deciding the inclusion of \textit{tropical basic semialgebraic sets}. By analogy with classical semialgebraic sets, a tropical basic semialgebraic subset of $\R^n$ is defined to be the set of solutions of a collection of inequalities
of the form $f_i^+ \rhd_i f_i^-, i\in[k]$ where $f_1^\pm, \dots,f_k^\pm$
are pairs of tropical polynomials, and $\rhd_1,\dots,\rhd_k\in \{\geq, >\}$.
Then, \Cref{thm:positivstellensatz} provides an effective way to check the inclusion of two tropical basic semialgebraic sets. Let us illustrate this by the following
typical special case: given $f^\pm_1, \ldots, f^\pm_{k+1}$  a collection of pairs of tropical polynomials, check whether the following implication holds:
\begin{equation}\label{eq:positivstellensatz}\tag{$\mathscr{P}$}
\left\{\begin{array}{rcl}
f^+_1(x) & \geq & f^-_1(x)\\
& \vdots &\\
f^+_k(x) & \geq & f^-_k(x)
\end{array}\right. \overset{}{\implies} f^+_{k+1}(x) \geq f^-_{k+1}(x) \quad \forall x \in \R^n \enspace .
\end{equation}
Checking \eqref{eq:positivstellensatz} is equivalent to showing that the following system
\[
(\mathscr{S}) : \left\{\begin{array}{rcl}
f^+_1(x) & \geq & f^-_1(x)\\
& \vdots &\\
f^+_k(x) & \geq & f^-_k(x)\\
f^+_{k+1}(x) & < & f^-_{k+1}(x)
\end{array}\right.
\]
has no solution $x \in \R^n$, which can be done using~\Cref{thm:positivstellensatz},
by reduction to the problem of the unsolvability a system of linear tropical (in) equations,
allowing both strict and weak inequalities. Recall that the latter system
reduces to a mean-payoff game~\cite[Theorem~4.7]{uli2013}, see also~\cite[Theorem~18]{AGK11b}.
\end{rmk}

\begin{expl}
Now let us illustrate \Cref{thm:nullstellensatz} with an example. Consider the following problem
\begin{equation}\tag{$\mathcal{P}$}
\label{ex:positivstellensatz}
\left\{\begin{array}{lcl}
0x_1 \tplus 0x_1x_2 & \geq & 0x_2\\
2 \tplus 0x_1 & \geq & 1x_2\\
3 & \geq & 0x_1
\end{array}\right. \quad \implies \quad 1x_1 \;\; \geq \;\; 0x_2 \tplus (-3)x_1^2 \enspace .
\end{equation}

We want to show that the implication in \eqref{ex:positivstellensatz} holds. This is the case if and only if the system
\begin{equation}\tag{$\mathcal{S}$}
\left\{\begin{array}{rcl}
0x_1 \tplus 0x_1x_2 & \geq & 0x_2\\
2 \tplus 0x_1 & \geq & 1x_2\\
3 & \geq & 0x_1\\
0x_2 \tplus (-3)x_1^2 & > & 1x_1
\end{array}\right.
\end{equation}
does not have a solution $x \in \R^2$. We can turn the strict inequality into a weak inequality by noticing that the latter system does not have a solution on $\R^2$ if and only if for all $\lambda > \unit = 0$, the system
\begin{equation}\tag{$\mathcal{S}'_\lambda$}
\label{ex:posi-lambda}
\left\{\begin{array}{rcl}
0x_1 \tplus 0x_1x_2 & \geq & 0x_2\\
2 \tplus 0x_1 & \geq & 1x_2\\
3 & \geq & 0x_1\\
0x_2 \tplus (-3)x_1^2 & \geq & \lambda \tdot 1x_1
\end{array}\right.
\end{equation}
does not have a solution $x \in \R^2$.

We rewrite the system \eqref{ex:positivstellensatz} as $\forall i \in \{1, 2, 3 \}, \, f^+_i(x) \geq f^-_i(x) \implies f^+_4(x) \geq f^-_4(x)$ by setting
\[
\begin{array}{rclcrclcrcl}
f^+_1 &=& 0x_1 \tplus 0x_1x_2 & \quad & f^-_1 &=& 0x_2 & \quad & f_1 &=& 0x_1 \tplus 0x_2 \tplus 0x_1x_2\\
f^+_2 &=& 2 \tplus 0x_1 & \quad & f^-_2 &=& 1x_2 & \quad & f_2 &=& 2 \tplus 0x_1 \tplus 1x_2\\
f^+_3 &=& 3 & \quad & f^-_3 &=& 0x_1 & \quad & f_3 &=& 3 \tplus 0x_1\\
f^+_4 &=& 1x_1 & \quad & f^-_4 &=& 0x_2 \tplus (-3)x_1^2 & \quad & f_4 &=& 1x_1 \tplus 0x_2 \tplus (-3)x_1^2
\end{array}
\]

On \Cref{fig:poly-ineq}, we show the Newton polytopes associated to the polynomials $f_1$, $f_2$, $f_3$ and $f_4$ as well as their Minkowski sum. Moreover, on Figures \ref{fig:sys-ineq-1} and \ref{fig:sys-ineq-2}, we show on the left the configuration of hypersurfaces associated respectively to \eqref{ex:positivstellensatz} and \eqref{ex:posi-lambda}. In this configuration, we colored every intersection point by the colors of the curves that do not take part in the intersection, and we reported the coloring on the matching cell of the decomposition of $Q$ on the right, as to illustrate the duality between the hypersurface configuration and the subdivision of $Q$.

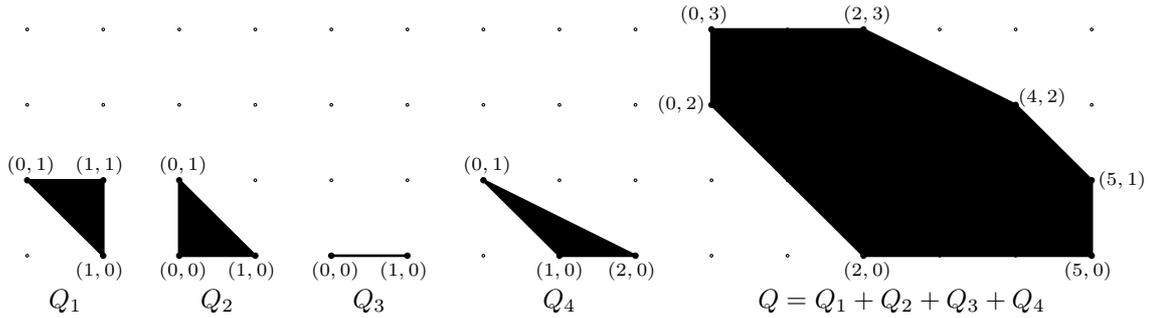
\begin{figure}[H]
\centering
\begin{tikzpicture}[line cap=round,line join=round,>=triangle 45,x=1cm,y=1cm]
\clip(-0.75,-0.75) rectangle (14.75,3.75);
\foreach \x in {-1,...,15}
    \foreach \y in {-1,...,4}
    {
    \draw (\x,\y) circle (0.5pt);
    }
\fill[line width=2pt,fill=black,fill opacity=0.1] (0,1) -- (1,0) -- (1,1) -- cycle;
\fill[line width=2pt,fill=black,fill opacity=0.1] (2,1) -- (2,0) -- (3,0) -- cycle;
\fill[line width=2pt,fill=black,fill opacity=0.1] (8,0) -- (7,0) -- (6,1) -- cycle;
\fill[line width=2pt,fill=black,fill opacity=0.1] (11,0) -- (14,0) -- (14,1) -- (13,2) -- (11,3) -- (9,3) -- (9,2) -- cycle;
\draw [line width=1pt] (0,1)-- (1,0);
\draw [line width=1pt] (1,0)-- (1,1);
\draw [line width=1pt] (1,1)-- (0,1);
\draw [line width=1pt] (2,1)-- (2,0);
\draw [line width=1pt] (2,0)-- (3,0);
\draw [line width=1pt] (3,0)-- (2,1);
\draw [line width=1pt] (5,0)-- (4,0);
\draw [line width=1pt] (8,0)-- (7,0);
\draw [line width=1pt] (7,0)-- (6,1);
\draw [line width=1pt] (6,1)-- (8,0);
\draw [line width=1pt] (11,0)-- (14,0);
\draw [line width=1pt] (14,0)-- (14,1);
\draw [line width=1pt] (14,1)-- (13,2);
\draw [line width=1pt] (13,2)-- (11,3);
\draw [line width=1pt] (11,3)-- (9,3);
\draw [line width=1pt] (9,3)-- (9,2);
\draw [line width=1pt] (9,2)-- (11,0);
\draw[color=black] (0.5,-0.6) node {$Q_1$};
\draw[color=black] (2.5,-0.6) node {$Q_2$};
\draw[color=black] (4.5,-0.6) node {$Q_3$};
\draw[color=black] (7,-0.6) node {$Q_4$};
\draw[color=black] (11.5,-0.6) node {$Q = Q_1 + Q_2 + Q_3 + Q_4$};
\begin{scriptsize}
\draw [fill=black] (0,1) circle (1pt);
\draw[color=black] (0.05,1.2) node {$(0,1)$};
\draw [fill=black] (1,0) circle (1pt);
\draw[color=black] (0.95,-0.2) node {$(1,0)$};
\draw [fill=black] (1,1) circle (1pt);
\draw[color=black] (0.95,1.2) node {$(1,1)$};
\draw [fill=black] (2,1) circle (1pt);
\draw[color=black] (2.05,1.2) node {$(0,1)$};
\draw [fill=black] (2,0) circle (1pt);
\draw[color=black] (2.05,-0.2) node {$(0,0)$};
\draw [fill=black] (3,0) circle (1pt);
\draw[color=black] (2.95,-0.2) node {$(1,0)$};
\draw [fill=black] (5,0) circle (1pt);
\draw[color=black] (4.95,-0.2) node {$(1,0)$};
\draw [fill=black] (4,0) circle (1pt);
\draw[color=black] (4.05,-0.2) node {$(0,0)$};
\draw [fill=black] (8,0) circle (1pt);
\draw[color=black] (7.95,-0.2) node {$(2,0)$};
\draw [fill=black] (7,0) circle (1pt);
\draw[color=black] (7,-0.2) node {$(1,0)$};
\draw [fill=black] (6,1) circle (1pt);
\draw[color=black] (6.05,1.2) node {$(0,1)$};
\draw [fill=black] (11,0) circle (1pt);
\draw[color=black] (11.05,-0.2) node {$(2,0)$};
\draw [fill=black] (14,0) circle (1pt);
\draw[color=black] (13.95,-0.2) node {$(5,0)$};
\draw [fill=black] (14,1) circle (1pt);
\draw[color=black] (14.4,1) node {$(5,1)$};
\draw [fill=black] (13,2) circle (1pt);
\draw[color=black] (13.35,2.1) node {$(4,2)$};
\draw [fill=black] (11,3) circle (1pt);
\draw[color=black] (11.05,3.2) node {$(2,3)$};
\draw [fill=black] (9,3) circle (1pt);
\draw[color=black] (8.9,3.2) node {$(0,3)$};
\draw [fill=black] (9,2) circle (1pt);
\draw[color=black] (8.6,2) node {$(0,2)$};
\end{scriptsize}
\end{tikzpicture}
\caption{The Newton polytopes associated $f_1$, $f_2$, $f_3$, $f_4$ and their Minkowski sum.}\label{fig:poly-ineq}
\end{figure}

\begin{figure}[H]
\centering
\begin{multicols}{2}
\scalebox{0.75}{
\begin{tikzpicture}[line cap=round,line join=round,>=triangle 45,x=1cm,y=1cm,scale=0.75]
\clip(-5.5,-2) rectangle (9,9);

\fill[line width=0pt,color=color1,fill=color1,fill opacity=0.1] (0,8)--(0,0)--(-2,-2)--(9,-2)--(9,8)--cycle;
\fill[line width=0pt,color=color2,fill=color2,fill opacity=0.1] (9,8)--(2,1)--(-3,1)--(-3,-2)--(9,-2)--cycle;
\fill[line width=0pt,color=color3,fill=color3,fill opacity=0.1] (-3,8)--(-3,-2)--(3,-2)--(3,8)--cycle;
\fill[line width=0pt,color=color4,fill=color4,fill opacity=0.1] (4,5)--(-3,-2)--(4,-2)--(4,-1)--cycle;

\draw [line width=0.75pt,color=color1,domain=-3:0] plot(\x,{(-0-1*\x)/-1});
\draw [line width=0.75pt,dashed,color=color1,domain=0:9] plot(\x,{(-0-0*\x)/1});
\draw [line width=0.75pt,color=color1] (0,0) -- (0,8);

\draw [line width=0.75pt,dashed,color=color2] (2,1) -- (2,-2);
\draw [line width=0.75pt,color=color2,domain=-3:2] plot(\x,{(-2-0*\x)/-2});
\draw [line width=0.75pt,color=color2,domain=2:9] plot(\x,{(-1--1*\x)/1});

\draw [line width=0.75pt,color=color3] (3,-2) -- (3,8);

\draw [line width=0.75pt,color=color4,domain=-3:4] plot(\x,{(-4-4*\x)/-4});
\draw [line width=0.75pt,color=color4] (4,5) -- (4,-2);
\draw [line width=0.75pt,dashed,color=color4,domain=4:5.5] plot(\x,{(-3--2*\x)/1});

\draw [line width=2pt,color=color1,domain=-3:0] plot(\x,{(-0-1*\x)/-1});
\draw [line width=2pt,color=color1] (0,0) -- (0,1);

\draw [line width=2pt,color=color2,domain=0:2] plot(\x,{(-2-0*\x)/-2});
\draw [line width=2pt,color=color2,domain=2:3] plot(\x,{(-1--1*\x)/1});

\draw [line width=2pt,color=color3] (3,-2) -- (3,2);

\draw [line width=2pt,color=color4,domain=-3:4] plot(\x,{(-4-4*\x)/-4});
\draw [line width=2pt,color=color4] (4,5) -- (4,-2);

\begin{large}
\draw[color=color1] (0, 8.5) node {$\tvar(f_1)$};
\draw[color=color2] (-4.25, 1) node {$\tvar(f_2)$};
\draw[color=color3] (3, 8.5) node {$\tvar(f_3)$};
\draw[color=color4] (6, 8.5) node {$\tvar(f_4)$};
\end{large}

\draw[fill=color4] (2,0) circle (3pt);
\draw[fill=color3] (2,0) -- ($(90:3pt)+(2,0)$) arc (90:270:3pt) -- cycle;
\draw[fill=color4] (3,0) circle (3pt);
\draw[fill=color2] (3,0) -- ($(90:3pt)+(3,0)$) arc (90:270:3pt) -- cycle;
\draw[fill=color3] (4,0) circle (3pt);
\draw[fill=color2] (4,0) -- ($(90:3pt)+(4,0)$) arc (90:270:3pt) -- cycle;
\draw[fill=color3] (0,1) circle (3pt);
\draw[fill=color2] (3,4) circle (3pt);
\draw[fill=color1] (3,4) -- ($(90:3pt)+(3,4)$) arc (90:270:3pt) -- cycle;
\draw[fill=color4] (3,2) circle (3pt);
\draw[fill=color1] (3,2) -- ($(90:3pt)+(3,2)$) arc (90:270:3pt) -- cycle;
\draw[fill=color3] (4,3) circle (3pt);
\draw[fill=color1] (4,3) -- ($(90:3pt)+(4,3)$) arc (90:270:3pt) -- cycle;
\end{tikzpicture}
}
\flushright
\begin{tikzpicture}[line cap=round,line join=round,>=triangle 45,x=1cm,y=1cm]
\clip(-1,-1) rectangle (6,4);

\draw [line width=0.5pt] (2,0)-- (5,0);
\draw [line width=0.5pt] (5,0)-- (5,1);
\draw [line width=0.5pt] (5,1)-- (4,2);
\draw [line width=0.5pt] (4,2)-- (2,3);
\draw [line width=0.5pt] (2,3)-- (0,3);
\draw [line width=0.5pt] (0,3)-- (0,2);
\draw [line width=0.5pt] (0,2)-- (2,0);

\draw[line width=0.5pt] (0,3) -- (1,3) -- (2,2) -- (2,1) -- (1,1) -- (0,2) -- cycle;
\fill[fill=color3,opacity=0.3] (0,3) -- (1,3) -- (2,2) -- (2,1) -- (1,1) -- (0,2) -- cycle;
\draw[line width=0.5pt] (1,3) -- (2,2) -- (3,2) -- (2,3) -- cycle;
\fill[pattern={Lines[angle=-45,distance=4pt,line width=2pt]},pattern color=color1,opacity=0.3] (1,3) -- (2,2) -- (3,2) -- (2,3) -- cycle;
\fill[pattern={Lines[xshift=4/sqrt(2),angle=-45,distance=4pt,line width=2pt]},pattern color=color2,opacity=0.3] (1,3) -- (2,2) -- (3,2) -- (2,3) -- cycle;
\draw[line width=0.5pt] (2,2) -- (3,1) -- (4,1) -- (3,2) -- cycle;
\fill[pattern={Lines[angle=-45,distance=4pt,line width=2pt]},pattern color=color1,opacity=0.3] (2,2) -- (3,1) -- (4,1) -- (3,2) -- cycle;
\fill[pattern={Lines[xshift=4/sqrt(2),angle=-45,distance=4pt,line width=2pt]},pattern color=color4,opacity=0.3] (2,2) -- (3,1) -- (4,1) -- (3,2) -- cycle;
\draw[line width=0.5pt] (3,2) -- (4,1) -- (5,1) -- (4,2) -- cycle;
\fill[pattern={Lines[angle=-45,distance=4pt,line width=2pt]},pattern color=color1,opacity=0.3] (3,2) -- (4,1) -- (5,1) -- (4,2) -- cycle;
\fill[pattern={Lines[xshift=4/sqrt(2),angle=-45,distance=4pt,line width=2pt]},pattern color=color3,opacity=0.3] (3,2) -- (4,1) -- (5,1) -- (4,2) -- cycle;
\draw[line width=0.5pt] (2,1) -- (2,0) -- (3,0) -- (3,1) -- cycle;
\fill[pattern={Lines[angle=-45,distance=4pt,line width=2pt]},pattern color=color3,opacity=0.3] (2,1) -- (2,0) -- (3,0) -- (3,1) -- cycle;
\fill[pattern={Lines[xshift=4/sqrt(2),angle=-45,distance=4pt,line width=2pt]},pattern color=color4,opacity=0.3] (2,1) -- (2,0) -- (3,0) -- (3,1) -- cycle;
\draw[line width=0.5pt] (3,1) -- (3,0) -- (4,0) -- (4,1) -- cycle;
\fill[pattern={Lines[angle=-45,distance=4pt,line width=2pt]},pattern color=color2,opacity=0.3] (3,1) -- (3,0) -- (4,0) -- (4,1) -- cycle;
\fill[pattern={Lines[xshift=4/sqrt(2),angle=-45,distance=4pt,line width=2pt]},pattern color=color4,opacity=0.3] (3,1) -- (3,0) -- (4,0) -- (4,1) -- cycle;
\draw[line width=0.5pt] (4,1) -- (4,0) -- (5,0) -- (5,1) -- cycle;
\fill[pattern={Lines[angle=-45,distance=4pt,line width=2pt]},pattern color=color2,opacity=0.3] (4,1) -- (4,0) -- (5,0) -- (5,1) -- cycle;
\fill[pattern={Lines[xshift=4/sqrt(2),angle=-45,distance=4pt,line width=2pt]},pattern color=color3,opacity=0.3] (4,1) -- (4,0) -- (5,0) -- (5,1) -- cycle;

\begin{scriptsize}
\draw [fill=black] (2,0) circle (1pt);
\draw[color=black] (2.05,-0.2) node {$(2,0)$};
\draw [fill=black] (5,0) circle (1pt);
\draw[color=black] (4.95,-0.2) node {$(5,0)$};
\draw [fill=black] (5,1) circle (1pt);
\draw[color=black] (5.4,1) node {$(5,1)$};
\draw [fill=black] (4,2) circle (1pt);
\draw[color=black] (4.35,2.1) node {$(4,2)$};
\draw [fill=black] (2,3) circle (1pt);
\draw[color=black] (2.05,3.2) node {$(2,3)$};
\draw [fill=black] (0,3) circle (1pt);
\draw[color=black] (-0.1,3.2) node {$(0,3)$};
\draw [fill=black] (0,2) circle (1pt);
\draw[color=black] (-0.4,2) node {$(0,2)$};
\end{scriptsize}
\draw (0,0) circle (1pt);
\draw (1,0) circle (1pt);
\draw (0,1) circle (1pt);
\draw (3,3) circle (1pt);
\draw (4,3) circle (1pt);
\draw (5,2) circle (1pt);
\draw (5,3) circle (1pt);
\draw [fill=black] (1,2) circle (1pt);
\draw [fill=black] (1,3) circle (1pt);
\draw [fill=black] (2,2) circle (1pt);
\draw [fill=black] (2,1) circle (1pt);
\draw [fill=black] (1,1) circle (1pt);
\draw [fill=black] (3,2) circle (1pt);
\draw [fill=black] (3,1) circle (1pt);
\draw [fill=black] (4,1) circle (1pt);
\draw [fill=black] (3,0) circle (1pt);
\draw [fill=black] (4,0) circle (1pt);
\end{tikzpicture}
\end{multicols}
\caption{Problem \eqref{ex:positivstellensatz} is illustrated here as the tropical basic semialgebraic set $\{ \textcolor{color1}{f^+_1 \geq f^-_1} \} \cap \{ \textcolor{color2}{f^+_2 \geq f^-_2} \} \cap \{ \textcolor{color3}{f^+_3 \geq f^-_3} \}$ is indeed included in the set $\{ \textcolor{color4!75!black}{f^+_4 \geq f^-_4} \}$. The subdivision of $Q$ associated to this system is displayed on the right.}\label{fig:sys-ineq-1}
\end{figure}
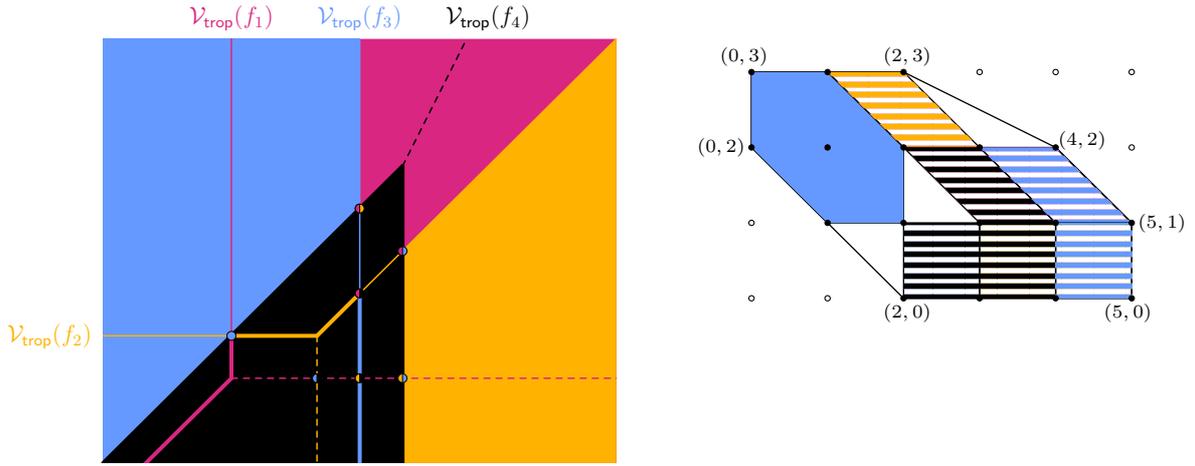

\begin{figure}[H]
\centering
\begin{multicols}{2}
\scalebox{0.75}{
\begin{tikzpicture}[line cap=round,line join=round,>=triangle 45,x=1cm,y=1cm,scale=0.75]
\clip(-5.5,-2) rectangle (9,9);

\fill[line width=0pt,color=color1,fill=color1,fill opacity=0.1] (0,8)--(0,0)--(-2,-2)--(9,-2)--(9,8)--cycle;
\fill[line width=0pt,color=color2,fill=color2,fill opacity=0.1] (9,8)--(2,1)--(-3,1)--(-3,-2)--(9,-2)--cycle;
\fill[line width=0pt,color=color3,fill=color3,fill opacity=0.1] (-3,8)--(-3,-2)--(3,-2)--(3,8)--cycle;
\fill[line width=0pt,color=color4,fill=color4,fill opacity=0.1] (4.5,6)--(-3,-1.5)--(-3,-2)--(4.5,-2)--cycle;

\draw [line width=0.75pt,dashed,color=color1,domain=-3:0] plot(\x,{(-0-1*\x)/-1});
\draw [line width=0.75pt,dashed,color=color1,domain=0:9] plot(\x,{(-0-0*\x)/1});
\draw [line width=0.75pt,color=color1] (0,0) -- (0,8);

\draw [line width=0.75pt,dashed,color=color2] (2,1) -- (2,-2);
\draw [line width=0.75pt,color=color2,domain=-3:2] plot(\x,{(-2-0*\x)/-2});
\draw [line width=0.75pt,color=color2,domain=2:9] plot(\x,{(-1--1*\x)/1});

\draw [line width=0.75pt,color=color3] (3,-2) -- (3,8);

\draw [line width=0.75pt,dashed,color=color4,domain=4.5:5.5] plot(\x,{(-3--2*\x)/1});

\draw [line width=2pt,color=color1,domain=-3:0] plot(\x,{(-0-1*\x)/-1});
\draw [line width=2pt,color=color1] (0,0) -- (0,1);

\draw [line width=2pt,color=color2,domain=0:2] plot(\x,{(-2-0*\x)/-2});
\draw [line width=2pt,color=color2,domain=2:3] plot(\x,{(-1--1*\x)/1});

\draw [line width=2pt,color=color3] (3,-2) -- (3,2);

\draw [line width=2pt,dashed,color=color4,domain=-3:4.5] plot(\x,{(-6-4*\x)/-4});
\draw [line width=2pt,dashed,color=color4] (4.5,6) -- (4.5,-2);

\begin{large}
\draw[color=color1] (0, 8.5) node {$\tvar(f_1)$};
\draw[color=color2] (-4.25, 1) node {$\tvar(f_2)$};
\draw[color=color3] (3, 8.5) node {$\tvar(f_3)$};
\draw[color=color4] (6.75, 8.5) node {$\tvar(f^+_4 \tplus \lambda \tdot f^-_4)$};
\end{large}

\draw[fill=color4] (2,0) circle (3pt);
\draw[fill=color3] (2,0) -- ($(90:3pt)+(2,0)$) arc (90:270:3pt) -- cycle;
\draw[fill=color4] (3,0) circle (3pt);
\draw[fill=color2] (3,0) -- ($(90:3pt)+(3,0)$) arc (90:270:3pt) -- cycle;
\draw[fill=color3] (4.5,0) circle (3pt);
\draw[fill=color2] (4.5,0) -- ($(90:3pt)+(4.5,0)$) arc (90:270:3pt) -- cycle;
\draw[fill=color3] (-0.5,1) circle (3pt);
\draw[fill=color1] (-0.5,1) -- ($(90:3pt)+(-0.5,1)$) arc (90:270:3pt) -- cycle;
\draw[fill=color3] (0,1.5) circle (3pt);
\draw[fill=color2] (0,1.5) -- ($(90:3pt)+(0,1.5)$) arc (90:270:3pt) -- cycle;
\draw[fill=color4] (0,1) circle (3pt);
\draw[fill=color3] (0,1) -- ($(90:3pt)+(0,1)$) arc (90:270:3pt) -- cycle;
\draw[fill=color2] (3,4.5) circle (3pt);
\draw[fill=color1] (3,4.5) -- ($(90:3pt)+(3,4.5)$) arc (90:270:3pt) -- cycle;
\draw[fill=color4] (3,2) circle (3pt);
\draw[fill=color1] (3,2) -- ($(90:3pt)+(3,2)$) arc (90:270:3pt) -- cycle;
\draw[fill=color3] (4.5,3.5) circle (3pt);
\draw[fill=color1] (4.5,3.5) -- ($(90:3pt)+(4.5,3.5)$) arc (90:270:3pt) -- cycle;
\end{tikzpicture}
}
\flushright
\begin{tikzpicture}[line cap=round,line join=round,>=triangle 45,x=1cm,y=1cm]
\clip(-1,-1) rectangle (6,4);

\draw [line width=0.5pt] (2,0)-- (5,0);
\draw [line width=0.5pt] (5,0)-- (5,1);
\draw [line width=0.5pt] (5,1)-- (4,2);
\draw [line width=0.5pt] (4,2)-- (2,3);
\draw [line width=0.5pt] (2,3)-- (0,3);
\draw [line width=0.5pt] (0,3)-- (0,2);
\draw [line width=0.5pt] (0,2)-- (2,0);

\draw[line width=0.5pt] (0,3) -- (1,3) -- (2,2) -- (1,2) -- cycle;
\fill[pattern={Lines[angle=-45,distance=4pt,line width=2pt]},pattern color=color2,opacity=0.3] (0,3) -- (1,3) -- (2,2) -- (1,2) -- cycle;
\fill[pattern={Lines[xshift=4/sqrt(2),angle=-45,distance=4pt,line width=2pt]},pattern color=color3,opacity=0.3] (0,3) -- (1,3) -- (2,2) -- (1,2) -- cycle;
\draw[line width=0.5pt] (0,3) -- (1,2) -- (1,1) -- (0,2) -- cycle;
\fill[pattern={Lines[angle=-45,distance=4pt,line width=2pt]},pattern color=color1,opacity=0.3] (0,3) -- (1,2) -- (1,1) -- (0,2) -- cycle;
\fill[pattern={Lines[xshift=4/sqrt(2),angle=-45,distance=4pt,line width=2pt]},pattern color=color3,opacity=0.3] (0,3) -- (1,2) -- (1,1) -- (0,2) -- cycle;
\draw[line width=0.5pt] (1,2) -- (2,2) -- (2,1) -- (1,1) -- cycle;
\fill[pattern={Lines[angle=-45,distance=4pt,line width=2pt]},pattern color=color3,opacity=0.3] (1,2) -- (2,2) -- (2,1) -- (1,1) -- cycle;
\fill[pattern={Lines[xshift=4/sqrt(2),angle=-45,distance=4pt,line width=2pt]},pattern color=color4,opacity=0.3] (1,2) -- (2,2) -- (2,1) -- (1,1) -- cycle;
\draw[line width=0.5pt] (1,3) -- (2,2) -- (3,2) -- (2,3) -- cycle;
\fill[pattern={Lines[angle=-45,distance=4pt,line width=2pt]},pattern color=color1,opacity=0.3] (1,3) -- (2,2) -- (3,2) -- (2,3) -- cycle;
\fill[pattern={Lines[xshift=4/sqrt(2),angle=-45,distance=4pt,line width=2pt]},pattern color=color2,opacity=0.3] (1,3) -- (2,2) -- (3,2) -- (2,3) -- cycle;
\draw[line width=0.5pt] (2,2) -- (3,1) -- (4,1) -- (3,2) -- cycle;
\fill[pattern={Lines[angle=-45,distance=4pt,line width=2pt]},pattern color=color1,opacity=0.3] (2,2) -- (3,1) -- (4,1) -- (3,2) -- cycle;
\fill[pattern={Lines[xshift=4/sqrt(2),angle=-45,distance=4pt,line width=2pt]},pattern color=color4,opacity=0.3] (2,2) -- (3,1) -- (4,1) -- (3,2) -- cycle;
\draw[line width=0.5pt] (3,2) -- (4,1) -- (5,1) -- (4,2) -- cycle;
\fill[pattern={Lines[angle=-45,distance=4pt,line width=2pt]},pattern color=color1,opacity=0.3] (3,2) -- (4,1) -- (5,1) -- (4,2) -- cycle;
\fill[pattern={Lines[xshift=4/sqrt(2),angle=-45,distance=4pt,line width=2pt]},pattern color=color3,opacity=0.3] (3,2) -- (4,1) -- (5,1) -- (4,2) -- cycle;
\draw[line width=0.5pt] (2,1) -- (2,0) -- (3,0) -- (3,1) -- cycle;
\fill[pattern={Lines[angle=-45,distance=4pt,line width=2pt]},pattern color=color3,opacity=0.3] (2,1) -- (2,0) -- (3,0) -- (3,1) -- cycle;
\fill[pattern={Lines[xshift=4/sqrt(2),angle=-45,distance=4pt,line width=2pt]},pattern color=color4,opacity=0.3] (2,1) -- (2,0) -- (3,0) -- (3,1) -- cycle;
\draw[line width=0.5pt] (3,1) -- (3,0) -- (4,0) -- (4,1) -- cycle;
\fill[pattern={Lines[angle=-45,distance=4pt,line width=2pt]},pattern color=color2,opacity=0.3] (3,1) -- (3,0) -- (4,0) -- (4,1) -- cycle;
\fill[pattern={Lines[xshift=4/sqrt(2),angle=-45,distance=4pt,line width=2pt]},pattern color=color4,opacity=0.3] (3,1) -- (3,0) -- (4,0) -- (4,1) -- cycle;
\draw[line width=0.5pt] (4,1) -- (4,0) -- (5,0) -- (5,1) -- cycle;
\fill[pattern={Lines[angle=-45,distance=4pt,line width=2pt]},pattern color=color2,opacity=0.3] (4,1) -- (4,0) -- (5,0) -- (5,1) -- cycle;
\fill[pattern={Lines[xshift=4/sqrt(2),angle=-45,distance=4pt,line width=2pt]},pattern color=color3,opacity=0.3] (4,1) -- (4,0) -- (5,0) -- (5,1) -- cycle;

\begin{scriptsize}
\draw [fill=black] (2,0) circle (1pt);
\draw[color=black] (2.05,-0.2) node {$(2,0)$};
\draw [fill=black] (5,0) circle (1pt);
\draw[color=black] (4.95,-0.2) node {$(5,0)$};
\draw [fill=black] (5,1) circle (1pt);
\draw[color=black] (5.4,1) node {$(5,1)$};
\draw [fill=black] (4,2) circle (1pt);
\draw[color=black] (4.35,2.1) node {$(4,2)$};
\draw [fill=black] (2,3) circle (1pt);
\draw[color=black] (2.05,3.2) node {$(2,3)$};
\draw [fill=black] (0,3) circle (1pt);
\draw[color=black] (-0.1,3.2) node {$(0,3)$};
\draw [fill=black] (0,2) circle (1pt);
\draw[color=black] (-0.4,2) node {$(0,2)$};
\end{scriptsize}
\draw (0,0) circle (1pt);
\draw (1,0) circle (1pt);
\draw (0,1) circle (1pt);
\draw (3,3) circle (1pt);
\draw (4,3) circle (1pt);
\draw (5,2) circle (1pt);
\draw (5,3) circle (1pt);
\draw [fill=black] (1,2) circle (1pt);
\draw [fill=black] (1,3) circle (1pt);
\draw [fill=black] (2,2) circle (1pt);
\draw [fill=black] (2,1) circle (1pt);
\draw [fill=black] (1,1) circle (1pt);
\draw [fill=black] (3,2) circle (1pt);
\draw [fill=black] (3,1) circle (1pt);
\draw [fill=black] (4,1) circle (1pt);
\draw [fill=black] (3,0) circle (1pt);
\draw [fill=black] (4,0) circle (1pt);
\end{tikzpicture}
\end{multicols}
\caption{For all $\lambda > 0$, the tropical basic semialgebraic set $\{ \textcolor{color1}{f^+_1 \geq f^-_1} \} \cap \{ \textcolor{color2}{f^+_2 \geq f^-_2} \} \cap \{ \textcolor{color3}{f^+_3 \geq f^-_3} \}$ is indeed included in the set $\{ \textcolor{color4!75!black}{\lambda \tdot f^+_4 > f^-_4} \}$, showing that system \eqref{ex:posi-lambda} does not have a solution in $\R^2$. The subdivision of $Q$ associated to this system is displayed on the right.}\label{fig:sys-ineq-2}
\end{figure}
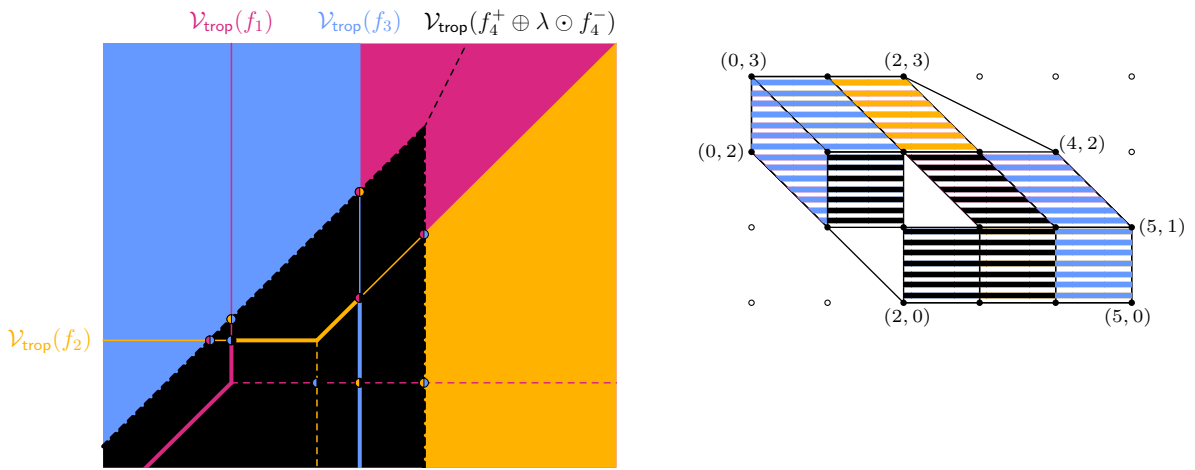

\begin{figure}[H]
\centering
\scalebox{0.95}{
\begin{tikzpicture}[line cap=round,line join=round,>=triangle 45,x=1cm,y=1cm,scale=1.5]
\clip(-0.2,-0.2) rectangle (5,3);

\draw [line width=0.5pt] (2,0)-- (5,0);
\draw [line width=0.5pt] (5,0)-- (5,1);
\draw [line width=0.5pt] (5,1)-- (4,2);
\draw [line width=0.5pt] (4,2)-- (2,3);
\draw [line width=0.5pt] (2,3)-- (0,3);
\draw [line width=0.5pt] (0,3)-- (0,2);
\draw [line width=0.5pt] (0,2)-- (2,0);

\draw[line width=1pt] (0,3) -- (1,3) -- (2,2) -- (1,2) -- cycle;
\fill[pattern={Lines[angle=-45,distance=4pt,line width=2pt]},pattern color=color2, opacity=0.3] (0,3) -- (1,3) -- (2,2) -- (1,2) -- cycle;
\fill[pattern={Lines[xshift=4/sqrt(2),angle=-45,distance=4pt,line width=2pt]},pattern color=color3,opacity=0.3] (0,3) -- (1,3) -- (2,2) -- (1,2) -- cycle;
\draw[line width=1pt] (0,3) -- (1,2) -- (1,1) -- (0,2) -- cycle;
\fill[pattern={Lines[angle=-45,distance=4pt,line width=2pt]},pattern color=color1,opacity=0.3] (0,3) -- (1,2) -- (1,1) -- (0,2) -- cycle;
\fill[pattern={Lines[xshift=4/sqrt(2),angle=-45,distance=4pt,line width=2pt]},pattern color=color3,opacity=0.3] (0,3) -- (1,2) -- (1,1) -- (0,2) -- cycle;
\draw[line width=1pt] (1,2) -- (2,2) -- (2,1) -- (1,1) -- cycle;
\fill[pattern={Lines[angle=-45,distance=4pt,line width=2pt]},pattern color=color3,opacity=0.3] (1,2) -- (2,2) -- (2,1) -- (1,1) -- cycle;
\fill[pattern={Lines[xshift=4/sqrt(2),angle=-45,distance=4pt,line width=2pt]},pattern color=color4,opacity=0.3] (1,2) -- (2,2) -- (2,1) -- (1,1) -- cycle;
\draw[line width=1pt] (1,3) -- (2,2) -- (3,2) -- (2,3) -- cycle;
\fill[pattern={Lines[angle=-45,distance=4pt,line width=2pt]},pattern color=color1,opacity=0.3] (1,3) -- (2,2) -- (3,2) -- (2,3) -- cycle;
\fill[pattern={Lines[xshift=4/sqrt(2),angle=-45,distance=4pt,line width=2pt]},pattern color=color2,opacity=0.3] (1,3) -- (2,2) -- (3,2) -- (2,3) -- cycle;
\draw[line width=1pt] (2,2) -- (3,1) -- (4,1) -- (3,2) -- cycle;
\fill[pattern={Lines[angle=-45,distance=4pt,line width=2pt]},pattern color=color1,opacity=0.3] (2,2) -- (3,1) -- (4,1) -- (3,2) -- cycle;
\fill[pattern={Lines[xshift=4/sqrt(2),angle=-45,distance=4pt,line width=2pt]},pattern color=color4,opacity=0.3] (2,2) -- (3,1) -- (4,1) -- (3,2) -- cycle;
\draw[line width=1pt] (3,2) -- (4,1) -- (5,1) -- (4,2) -- cycle;
\fill[pattern={Lines[angle=-45,distance=4pt,line width=2pt]},pattern color=color1,opacity=0.3] (3,2) -- (4,1) -- (5,1) -- (4,2) -- cycle;
\fill[pattern={Lines[xshift=4/sqrt(2),angle=-45,distance=4pt,line width=2pt]},pattern color=color3,opacity=0.3] (3,2) -- (4,1) -- (5,1) -- (4,2) -- cycle;
\draw[line width=1pt] (2,1) -- (2,0) -- (3,0) -- (3,1) -- cycle;
\fill[pattern={Lines[angle=-45,distance=4pt,line width=2pt]},pattern color=color3,opacity=0.3] (2,1) -- (2,0) -- (3,0) -- (3,1) -- cycle;
\fill[pattern={Lines[xshift=4/sqrt(2),angle=-45,distance=4pt,line width=2pt]},pattern color=color4,opacity=0.3] (2,1) -- (2,0) -- (3,0) -- (3,1) -- cycle;
\draw[line width=1pt] (3,1) -- (3,0) -- (4,0) -- (4,1) -- cycle;
\fill[pattern={Lines[angle=-45,distance=4pt,line width=2pt]},pattern color=color2,opacity=0.3] (3,1) -- (3,0) -- (4,0) -- (4,1) -- cycle;
\fill[pattern={Lines[xshift=4/sqrt(2),angle=-45,distance=4pt,line width=2pt]},pattern color=color4,opacity=0.3] (3,1) -- (3,0) -- (4,0) -- (4,1) -- cycle;
\draw[line width=1pt] (4,1) -- (4,0) -- (5,0) -- (5,1) -- cycle;
\fill[pattern={Lines[angle=-45,distance=4pt,line width=2pt]},pattern color=color2,opacity=0.3] (4,1) -- (4,0) -- (5,0) -- (5,1) -- cycle;
\fill[pattern={Lines[xshift=4/sqrt(2),angle=-45,distance=4pt,line width=2pt]},pattern color=color3,opacity=0.3] (4,1) -- (4,0) -- (5,0) -- (5,1) -- cycle;

\begin{scriptsize}
\draw (-0.1,-0.1) circle (1pt);
\draw (0.9,-0.1) circle (1pt);
\draw (-0.1,0.9) circle (1pt);
\draw (1.9,-0.1) circle (1pt);
\draw (0.9,0.9) circle (1pt);
\draw (-0.1,1.9) circle (1pt);
\draw (2.9,-0.1) circle (1pt);
\draw [fill=black] (1.9,0.9) circle (1pt);
\draw[color=black] (1.65,0.75) node {$4,(1,0)$};
\draw [fill=black] (0.9,1.9) circle (1pt);
\draw[color=black] (0.65,1.75) node {$1,(0,1)$};
\draw (-0.1,2.9) circle (1pt);
\draw (3.9,-0.1) circle (1pt);
\draw [fill=black] (2.9,0.9) circle (1pt);
\draw[color=black] (2.65,0.75) node {$4,(1,0)$};
\draw [fill=black] (1.9,1.9) circle (1pt);
\draw[color=black] (1.65,1.75) node {$4,(1,0)$};
\draw [fill=black] (0.9,2.9) circle (1pt);
\draw[color=black] (0.65,2.75) node {$2,(0,1)$};
\draw (4.9,-0.1) circle (1pt);
\draw [fill=black] (3.9,0.9) circle (1pt);
\draw[color=black] (3.65,0.75) node {$4,(1,0)$};
\draw [fill=black] (2.9,1.9) circle (1pt);
\draw[color=black] (2.65,1.75) node {$4,(1,0)$};
\draw [fill=black] (1.9,2.9) circle (1pt);
\draw[color=black] (1.65,2.75) node {$2,(0,1)$};
\draw [fill=black] (4.9,0.9) circle (1pt);
\draw[color=black] (4.65,0.75) node {$3,(1,0)$};
\draw [fill=black] (3.9,1.9) circle (1pt);
\draw[color=black] (3.65,1.75) node {$3,(1,0)$};
\draw (2.9,2.9) circle (1pt);
\draw (4.9,1.9) circle (1pt);
\draw (3.9,2.9) circle (1pt);
\draw (4.9,2.9) circle (1pt);
\end{scriptsize}
\end{tikzpicture}
}
\caption{The polytope $Q+\delta$, with the integer points inside the maximal dimensional cells of the decomposition of $Q+\delta$ labelled by the row content the cell they belong to.}\label{fig:rowc-2}
\end{figure}

For this collection of supports, one can once again take $\delta = (-1+\varepsilon,-1+\varepsilon)$ with $\varepsilon = \frac{1}{10}$, which gives us the Canny-Emiris set
\[
\calE = \{ (1,0), (0,1), (2,0), (1,1), (0,2), (3,0), (2,1), (1,2), (4,0), (3,1) \}
\]
corresponding to the set of monomials
\[
\{ x_1, x_2, x_1^2, x_1x_2, x_2^2, x_1^3, x_1^2x_2, x_1x_2^2, x_1^4, x_1^3x_2 \} \enspace .
\]

We thus obtain the $21 \times 10$ submatrices $\mac^{+}_\calE$ and $\mac^{\boldsymbol{-}}_\calE$ of the Macaulay matrix associated to the set $\calE$, which we combined in the single following matrix for the sake of readability and to save some space, with the normal font weight coeffficients corresponding to the coefficients of $\mac^{+}_\calE$, and the bold ones corresponding to $\mac^{\boldsymbol{-}}_\calE$

\def\cpm{\mathbin{\ThisStyle{\ensurestackMath{\abovebaseline[-\dimexpr1pt+2.4\LMpt]{%
  \stackunder[-\dimexpr1pt+2.5\LMpt]{\SavedStyle+}{%
  \boldsymbol{\SavedStyle-}}}}}}}

\[
\mac_{\mathcal{E}}^{\cpm} =
\begin{blockarray}{*{11}{c}}
& x_1 & x_2 & x_1^2 & x_1x_2 & x_2^2 & x_1^3 & x_1^2x_2 & x_1x_2^2 & x_1^4 & x_1^3x_2\\
\begin{block}{c(*{10}{c})}
f_1 & 0 & \boldsymbol{0} & & 0 & & & & & &\\
x_1f_1 & & & 0 & \boldsymbol{0} & & & 0 & & &\\
x_2f_1 & & & & 0 & \boldsymbol{0} & & & 0 & &\\
x_1^2f_1 & & & & & & 0 & \boldsymbol{0} & & & 0\\
x_1f_2 & 2 & & 0 & \boldsymbol{1} & & & & & &\\
x_2f_2 & & 2 & & 0 & \boldsymbol{1} & & & & &\\
x_1^2f_2 & & & 2 & & & 0 & \boldsymbol{1} & & &\\
x_1x_2f_2 & & & & 2 & & & 0 & \boldsymbol{1} & &\\
x_1^3f_2 & & & & & & 2 & & & 0 & \boldsymbol{1}\\
x_1f_3 & 3 & & \boldsymbol{0} & & & & & & &\\
x_2f_3 & & 3 & & \boldsymbol{0} & & & & & &\\
x_1^2f_3 & & & 3 & & & \boldsymbol{0} & & & &\\
x_1x_2f_3 & & & & 3 & & & \boldsymbol{0} & & &\\
x_2^2f_3 & & & & & 3 & & & \boldsymbol{0} & &\\
x_1^3f_3 & & & & & & 3 & & & \boldsymbol{0} &\\
x_1^2x_2f_3 & & & & & & & 3 & & & \boldsymbol{0}\\
f_4 & \boldsymbol{1 + \lambda} & 0 & -3 & & & & & & &\\
x_1f_4 & & & \boldsymbol{1 + \lambda} & 0 & & -3 & & & &\\
x_2f_4 & & & & \boldsymbol{1 + \lambda} & 0 & & -3 & & &\\
x_1^2f_4 & & & & & & \boldsymbol{1 + \lambda} & 0 & & -3 &\\
x_1x_2f_4 & & & & & & & \boldsymbol{1 + \lambda} & 0 & & -3\\
\end{block}
\end{blockarray} \enspace .
\]

Applying the Canny-Emiris construction to the collection of polynomials in the system \eqref{ex:posi-lambda}, we obtain the subdivision of $Q$ which allows us to associate to every point $p \in \calE$ its row content $i, a_i$, which we recall is univocally determined by the maximal-dimensional cell of the decomposition of $Q$ to which $p-\delta$ belongs, as illustrated in \Cref{fig:rowc-2}.

More precisely, in the following table, for each point $p$ of $\calE$ in the first row:
\begin{itemize}[label=$\diamond$]
\item the second row displays the monomial $x^p$ which corresponds to a column of the Macaulay matrix,
\item the third row displays the row content $i, a_i$ of $p$,
\item the fourth row displays the polynomial $x^{p-a_i}f_i$ which corresponds to a row of the Macaulay matrix,
\item and finally the last row displays the tropical scaling factor $h(p-\delta)$ which must be substracted (in the usual sense) to the column $p$ of the matrices $\mac^{\cpm}_{\mathcal{EE}}$ in order to obtain the matrix $\widetilde{\mac}^{\cpm}_{\mathcal{EE}}$.
\end{itemize}

\begin{center}
\scalebox{0.87}
{
\begin{tabular}{|c||*{10}{c}|}
\hline
$p \in \calE$ & $(1,0)$ & $(0,1)$ & $(2,0)$ & $(1,1)$ & $(0,2)$ & $(3,0)$ & $(2,1)$ & $(1,2)$ & $(4,0)$ & $(3,1)$\\
\hline
$x^p$ & $x_1$ & $x_2$ & $x_1^2$ & $x_1x_2$ & $x_2^2$ & $x_1^3$ & $x_1^2x_2$ & $x_1x_2^2$ & $x_1^4$ & $x_1^3x_2$\\
\hline
$i, a_i$ & $4,(1,0)$ & $1,(0,1)$ & $4,(1,0)$ & $4,(1,0)$ & $2,(0,1)$ & $4,(1,0)$ & $4,(1,0)$ & $2,(0,1)$ & $3,(1,0)$ & $3,(1,0)$\\
\hline
$x^{p-a_i}f_i$ & $f_4$ & $f_1$ & $x_1f_4$ & $x_2f_4$ & $x_2f_2$ & $x_1^2f_4$ & $x_1x_2f_4$ & $x_1x_2f_2$ & $x_1^3f_3$ & $x_1^2x_2f_3$\\
\hline
$h(p-\delta)$ & $6+\lambda$ & $\frac{51+9\lambda}{10}$ & $\frac{42}{10}+\lambda$ & $\frac{51}{10}+\lambda$ & $\frac{41+\lambda}{10}$ & $\frac{13}{10}+\lambda$ & $\frac{25}{10}+\lambda$ & $\frac{17+\lambda}{10}$ & $\frac{-26+\lambda}{10}$ & $\frac{-13+2\lambda}{10}$\\
\hline
\end{tabular}
}
\end{center}

With the information from the previous table, we obtain the following $10 \times 10$ pair of square submatrices $\mac^{\cpm}_{\mathcal{EE}}$ of $\mac^{\cpm}_{\mathcal{E}}$

\[
\mac_{\mathcal{EE}}^{\cpm} =
\begin{blockarray}{*{11}{c}}
& x_1 & x_2 & x_1^2 & x_1x_2 & x_2^2 & x_1^3 & x_1^2x_2 & x_1x_2^2 & x_1^4 & x_1^3x_2\\
\begin{block}{c(*{10}{c})}
f_4 & \boldsymbol{1 + \lambda} & 0 & -3 & & & & & & &\\
f_1 & 0 & \boldsymbol{0} & & 0 & & & & & &\\
x_1f_4 & & & \boldsymbol{1 + \lambda} & 0 & & -3 & & & &\\
x_2f_4 & & & & \boldsymbol{1 + \lambda} & 0 & & -3 & & &\\
x_2f_2 & & 2 & & 0 & \boldsymbol{1} & & & & &\\
x_1^2f_4 & & & & & & \boldsymbol{1 + \lambda} & 0 & & -3 &\\
x_1x_2f_4 & & & & & & & \boldsymbol{1 + \lambda} & 0 & & -3\\
x_1x_2f_2 & & & & 2 & & & 0 & \boldsymbol{1} & &\\
x_1^3f_3 & & & & & & 3 & & & \boldsymbol{0} &\\
x_1^2x_2f_3 & & & & & & & 3 & & & \boldsymbol{0}\\
\end{block}
\end{blockarray} \enspace .
\]

Finally, after applying the tropical scaling of factor $h(p-\delta)$ to the column $p$ of the previous matrices for all $p \in \calE$, we obtain the following pair of matrices

\[
\widetilde{\mac}_{\mathcal{EE}}^{\cpm} = -\frac{1}{10}
\begin{tiny}
\begin{pmatrix}
\boldsymbol{50} & 51 + 9\lambda & 72 + 10\lambda & & & & & & &\\
60 + 10\lambda & \boldsymbol{51 + 9\lambda} & & 51 + 10\lambda & & & & & &\\
& & \boldsymbol{32} & 51 + 10\lambda & & 43 + 10\lambda & & & &\\
& & & \boldsymbol{41} & 41 + \lambda & & 55 + 10\lambda & & &\\
& 31 + 9\lambda & & 51 + 10\lambda & \boldsymbol{31 + \lambda} & & & & &\\
& & & & & \boldsymbol{3} & 25 + 10\lambda & & 4 + \lambda &\\
& & & & & & \boldsymbol{15} & 17 + \lambda & & 17 + 2\lambda\\
& & & 31 + 10\lambda & & & 25 + 10\lambda & \boldsymbol{7 + \lambda} & &\\
& & & & & -17 + 10\lambda & & & \boldsymbol{-26 + \lambda} &\\
& & & & & & -5 + 10\lambda & & & \boldsymbol{-13 + 2\lambda}\\
\end{pmatrix}
\end{tiny}
\]
and we indeed observe for every $\lambda > 0$ that for each row, the diagonal coefficient of $\widetilde{\mac}^{\boldsymbol{-}}_{\calE\calE}$ is strictly greater than all the coefficients of $\widetilde{\mac}^{+}_{\calE\calE}$ in the same row, or in other words that $\widetilde{\mac}^{+}_{\calE\calE}$ is diagonally dominated by $\widetilde{\mac}^{\boldsymbol{-}}_{\calE\calE}$, and thus from Lemmas \ref{lem:ddom} and \ref{lem-chang-diag}, the only solution to the equation $\mac^{+}_\calE \tdot y \geq \mac^{\boldsymbol{-}}_\calE \tdot y$ in $\R^{10}$ is $y = \zero$.
\end{expl}

\begin{rmk}
  We may define a tropical semialgebraic subset of $\R^n$ as a finite union of tropical basic semialgebraic subsets. This leads to a more general class of sets than the one
  arising by considering the images
  by the valuation of semialgebraic sets
  over a real closed non-archimedean field. Indeed,
  it is shown in~\cite[Theorem~3.1]{AGS20} that the image by a non-trivial and convex valuation of a semialgebraic set over a non-archimedean field is a closed semilinear set. In particular, when the value group is $\R$, this image is a finite union of {\em closed} tropical basic semialgebraic subsets, whereas our definition allows more generally tropical basic semialgebraic subsets not be closed, owing to the presence of strict inequalities.
\end{rmk}
\begin{rmk}
Once again, the proof of the implication that if there is a solution $x \in \R^n$ to the system $f^+ \rhd f^-$, then there exists a vector $y \in \R^{\calE'}$ such that $\mac^+ \tdot y \rhd \mac^- \tdot y$ is immediate by choosing $y$ to be the Veronese embedding $\ver(x)$ of $x$, which we recall is equal to the vector $(x^{\nu})_{\nu \in \calE'}$. Therefore, we will only focus on the converse implication in what follows, or rather on its contrapositive.
\end{rmk}

\subsection{Further preliminary results}

In this section, we adapt the results from \Cref{sec:prel} to the two-sided case in order to prove \Cref{thm:positivstellensatz}.

\subsubsection{Diagonal dominance for a pair of matrices}

In order to prove this theorem, we introduce a notion of diagonal dominance adapted to systems of inequalities and equalities.

\begin{dfn}
Let $A = (a_{ij})_{(i, j) \in [p] \times [p]}$ and $B = (b_{ij})_{(i, j) \in [p] \times [p]}$ be a pair of matrices in $\T^{p \times p}$. One says that $B$ \textit{diagonally dominates} $A$, or that $A$ is \textit{diagonally dominated} by $B$ whenever
\[
b_{ii} > a_{ij} \quad \textrm{for all} \quad 1 \leq i, j \leq p \enspace .
\]
\end{dfn}

\begin{lem}\label{lem:ddom}
Let $A$ and $B$ be a pair of matrices in $\T^{p \times p}$ such that $A$ is diagonally dominated by $B$. Then the only solution to the inequation $A \tdot y \geq B \tdot y$ of unknown $y \in \T^p$ is $y = \zero$.
\end{lem}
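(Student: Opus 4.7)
The plan is to mimic closely the argument of \Cref{lem:ddns}, replacing the double-maximum criterion $\bal$ by the weak inequality that appears in the statement $A \tdot y \geq B \tdot y$. The diagonal dominance hypothesis concerns only the diagonal entries of $B$ versus \emph{all} entries of $A$ (including off-diagonal ones of the same row), so the proof will rest on the very same ``pick a coordinate where $y$ is maximal'' trick as in the one-sided case.

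First I would fix a hypothetical solution $y = (y_1, \ldots, y_p) \in \T^p$ to $A \tdot y \geq B \tdot y$ and select an index $i$ achieving $y_i = \max_{1 \leq j \leq p} y_j$. If this maximum equals $-\infty$, then the whole vector $y$ is already $\zero$ and the conclusion holds trivially, so the substantive case is $y_i > -\infty$, where I will derive a contradiction with the $i$-th line of the inequation.

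For this I would estimate the two sides of coordinate $i$ separately. On the one hand, the maximality of $y_i$ yields
\[
(A \tdot y)_i = \max_{1 \leq j \leq p}(a_{ij} + y_j) \leq \bigl( \max_{1 \leq j \leq p} a_{ij} \bigr) + y_i \enspace .
\]
On the other hand, taking only the diagonal term in the tropical product gives
\[
(B \tdot y)_i \geq b_{ii} + y_i \enspace .
\]
The diagonal dominance hypothesis $b_{ii} > a_{ij}$ for all $j$ implies $b_{ii} > \max_j a_{ij}$, and adding the \emph{finite} quantity $y_i$ preserves this strict inequality, so $(B \tdot y)_i > (A \tdot y)_i$, contradicting the $i$-th line of $A \tdot y \geq B \tdot y$.

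The main thing to watch is that the strict inequality on coefficients must survive after adding $y_i$, which is why dispatching the case $y_i = -\infty$ at the outset is necessary: if $y_i = -\infty$, both bounds above collapse to $-\infty$ and no contradiction can be extracted. Note that no hypothesis on the off-diagonal entries of $B$ is used, nor is it required that any of the $a_{ij}$ be finite. Altogether I do not expect any real obstacle: this lemma is the immediate two-sided analogue of \Cref{lem:ddns} and the proof is a short variant of that one.
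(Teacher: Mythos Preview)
Your proposal is correct and follows essentially the same approach as the paper: pick an index $i$ where $y_i$ is maximal, assume $y_i > -\infty$, and derive a contradiction from the $i$-th row using $b_{ii} + y_i > a_{ij} + y_j$ for all $j$. The paper's write-up is slightly terser but the argument is identical.
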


\begin{proof}
Let $y = (y_1,\ldots,y_p) \in \T^p$ be such that $A \tdot y \geq B \tdot y$ and consider $1 \leq i \leq p$ such that $y_i = \max_{1 \leq j \leq p} y_j$. Assume that $y \neq \zero$, and thus that $y_i > -\infty$. Then from the relation $A \tdot y \geq B \tdot y$, it follows in particular that
\[
\max_{1 \leq j \leq p} (a_{ij} + y_j) \geq \max_{1 \leq j \leq p} (b_{ij} + y_j) \geq b_{ii} + y_i \enspace ,
\]
but by choice of $i$ and since $A$ is diagonally dominated by $B$, we have
\[
b_{ii} + y_i > a_{ij} + y_j \quad \textrm{for all} \quad 1 \leq j \leq p \enspace ,
\]
hence we get a contradiction.
\end{proof}

\begin{cor}\label{cor:ddom}
Let $A$ and $B$ be a pair of matrices in $\T^{p \times p}$ such that for all $1 \leq i \leq p$, we have either
\begin{equation}\label{eq:perddom}
b_{ii} > \max_{1 \leq j \leq p} a_{ij} \quad \textrm{or} \quad a_{ii} > \max_{1 \leq j \leq p} b_{ij} \enspace .
\end{equation}
Then the only solution to the inequation $A \tdot y = B \tdot y$ of unknown $y \in \T^p$ is $y = \zero$.
\end{cor}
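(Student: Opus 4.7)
The plan is to adapt the proof of \Cref{lem:ddom} by arguing row-by-row, exploiting the fact that the diagonal-dominance hypothesis~\eqref{eq:perddom} is allowed to swing in either direction depending on the row. I take any $y=(y_1,\dots,y_p)\in\T^p$ satisfying $A\tdot y = B\tdot y$ coordinatewise, assume for contradiction that $y\neq\zero$, and select an index $i\in[p]$ with $y_i=\max_{1\leq j\leq p}y_j$, so that $y_i>-\infty$. I will then derive a strict inequality between the $i$-th entries of $A\tdot y$ and $B\tdot y$, contradicting the equation.

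To carry out this step I distinguish the two cases of~\eqref{eq:perddom} at the chosen row $i$. In the case $b_{ii}>\max_{1\leq j\leq p}a_{ij}$, I use the maximality of $y_i$ to write, for every $j$, the chain $b_{ii}+y_i>a_{ij}+y_i\geq a_{ij}+y_j$, so that
\[
\max_{1\leq j\leq p}(b_{ij}+y_j)\;\geq\; b_{ii}+y_i\;>\;\max_{1\leq j\leq p}(a_{ij}+y_j),
\]
which contradicts the $i$-th coordinate of $A\tdot y=B\tdot y$. In the symmetric case $a_{ii}>\max_{1\leq j\leq p}b_{ij}$, the identical argument with $A$ and $B$ swapped yields $\max_j(a_{ij}+y_j)>\max_j(b_{ij}+y_j)$, again contradicting the equation. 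Hence no such $y\neq\zero$ exists.

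There is no genuine obstacle here: the only subtle point is that the per-row alternative in the hypothesis is compatible with the global argument, because we only need the dominance to hold at the single row indexed by the largest coordinate of $y$, and at that row either alternative forces the same kind of strict inequality that was exploited in \Cref{lem:ddom}. In fact, the statement could equivalently be deduced by applying \Cref{lem:ddom} twice (once to $A\tdot y\geq B\tdot y$ for the rows of the second type, and once to $B\tdot y\geq A\tdot y$ for the rows of the first type), but the direct maximum-coordinate argument is shorter and self-contained.
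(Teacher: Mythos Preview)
Your proof is correct. Both your argument and the paper's rest on the same maximum-coordinate idea from \Cref{lem:ddom}; the difference is only in packaging. The paper sets $I=\{i:b_{ii}>\max_j a_{ij}\}$ and swaps the $i$-th rows of $A$ and $B$ for $i\notin I$ to produce matrices $\widetilde A,\widetilde B$ with $\widetilde A$ diagonally dominated by $\widetilde B$, observes that $A\tdot y=B\tdot y\iff\widetilde A\tdot y=\widetilde B\tdot y$, and then invokes \Cref{lem:ddom} once. You instead inline the argument, making the case split from~\eqref{eq:perddom} directly at the row where $y$ attains its maximum; this is slightly more direct and avoids introducing the auxiliary matrices. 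One small caveat: your closing parenthetical about ``applying \Cref{lem:ddom} twice'' to subsets of rows is not literally correct as stated (the lemma requires the dominance hypothesis on \emph{every} row of a square matrix), but this does not affect your main argument.
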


\begin{proof}
The idea of the proof is simply that by swapping some of the rows of $A$ and $B$, we can obtain a pair of matrices $\widetilde{A}$ and $\widetilde{B}$ such that $\widetilde{A}$ is diagonally dominated by $\widetilde{B}$. More precisely, set 
\[
I = \{ 1 \leq i \leq p : b_{ii} > a_{ij} \, \textrm{for all} \; 1 \leq j \leq p \} \quad \textrm{and} \quad J = \{ 1, \ldots, p \} \setminus I \enspace ,
\]
and let
\[
\widetilde{A} = (\widetilde{a}_{ij})_{(i, j) \in [p] \times [p]} \quad \textrm{with} \quad \widetilde{a}_{ij} = \left\{\begin{array}{rcl}
a_{ij} & \textrm{if} & i \in I\\
b_{ij} & \textrm{if} & i \in J
\end{array} \right. 
\]
and
\[
\widetilde{B} = (\widetilde{b}_{ij})_{(i, j) \in [p] \times [p]} \quad \textrm{with} \quad \widetilde{b}_{ij} = \left\{\begin{array}{rcl}
b_{ij} & \textrm{if} & i \in I\\
a_{ij} & \textrm{if} & i \in J \enspace .
\end{array} \right. 
\]
By construction, notice that for $y \in \R^p$, we have
\[
\widetilde{A} \tdot y = \widetilde{B} \tdot y \iff A \tdot y = B \tdot y \enspace ,
\]
and moreover $\widetilde{A}$ is diagonally dominated by $\widetilde{B}$, thus by the previous lemma, the only solution to the equality $\widetilde{A} \tdot y = \widetilde{B} \tdot y$ is $y = \zero$, and thus $\zero$ is also the only solution to the equality $A \tdot y = B \tdot y$.
\end{proof}

Finally we will also make use of the following two lemmas, which are immediate adaptations of \Cref{lem:nsg1} and \Cref{lem:nsg2} to the two-sided case.

\begin{lem}\label{lem-chang-diag}
Let $\rhd\in \{\geq,=\}$. 
Let $A = (a_{ij})_{(i,j) \in [p] \times [q]}$ and $B = (b_{ij})_{(i,j) \in [p] \times [q]} \in \T^{p \times q}$ be two $p \times q$ tropical matrices. Fix for $1 \leq j \leq q$, $\varepsilon_j \in \R$, and set $\widetilde{A} = (\widetilde{a}_{ij})_{(i,j) \in [p] \times [q]} \in \T^{p \times q}$ and $\widetilde{B} = (\widetilde{b}_{ij})_{(i,j) \in [p] \times [q]} \in \T^{p \times q}$ with $\widetilde{a}_{ij} = a_{ij} + \varepsilon_j$ and $\widetilde{b}_{ij} = b_{ij} + \varepsilon_j$ for all $1 \leq i \leq p$ and $1 \leq j \leq q$. Then the (in)equality $A \tdot y \rhd B \tdot y$ of unknown $y \in \T^q$ has no nonzero solution if and only if the (in)equality $\widetilde{A} \tdot \widetilde{y} \rhd \widetilde{B} \tdot \widetilde{y}$ of unknown $\widetilde{y} \in \T^q$ has no nonzero solution.
\end{lem}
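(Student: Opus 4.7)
The plan is to establish a bijection between the solution sets of the two (in)equalities that preserves the property of being nonzero. Concretely, I will consider the correspondence $y \leftrightarrow \widetilde{y}$ on $\T^q$ defined by $y_j = \widetilde{y}_j + \varepsilon_j$ for $1 \leq j \leq q$. Since every $\varepsilon_j$ lies in $\R$, the convention $\bot + v = \bot$ ensures that $y_j = \bot$ if and only if $\widetilde{y}_j = \bot$, so this map is an involutive bijection of $\T^q$ sending $\zero$ to $\zero$ and, more importantly, sending nonzero vectors to nonzero vectors.

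Next, I would carry out the routine tropical matrix computation
\[
(\widetilde{A} \tdot \widetilde{y})_i = \max_{1 \leq j \leq q}\bigl(\widetilde{a}_{ij} + \widetilde{y}_j\bigr) = \max_{1 \leq j \leq q}\bigl(a_{ij} + \varepsilon_j + \widetilde{y}_j\bigr) = \max_{1 \leq j \leq q}\bigl(a_{ij} + y_j\bigr) = (A \tdot y)_i ,
\]
and similarly $(\widetilde{B} \tdot \widetilde{y})_i = (B \tdot y)_i$ for all $i \in [p]$. Hence the two relations $\widetilde{A} \tdot \widetilde{y} \rhd \widetilde{B} \tdot \widetilde{y}$ and $A \tdot y \rhd B \tdot y$ are satisfied simultaneously, whether $\rhd$ stands for $\geq$ or $=$.

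Combining these two observations yields the lemma: any nonzero solution $y$ of $A \tdot y \rhd B \tdot y$ gives, via $\widetilde{y}_j = y_j - \varepsilon_j$, a nonzero solution $\widetilde{y}$ of $\widetilde{A} \tdot \widetilde{y} \rhd \widetilde{B} \tdot \widetilde{y}$, and conversely. There is no real obstacle here; this is a direct two-sided analogue of \Cref{lem:nsg1}, the only point worth mentioning being that the finiteness of the $\varepsilon_j$'s is exactly what guarantees that the bijection respects the support of $y$, and hence preserves the set of \emph{nonzero} solutions (not merely the set of all solutions).
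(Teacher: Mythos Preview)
Your proof is correct and matches the paper's approach exactly: the paper does not spell out a proof for this lemma, stating only that it is an ``immediate adaptation'' of \Cref{lem:nsg1} to the two-sided case, which is precisely the argument you wrote out.
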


\begin{lem}\label{lem-sousmatrice}
Let $\rhd\in \{\geq,=\}$. 
Let $A$ and $B$ be two $p \times q$ tropical matrices, and assume that $A$ and $B$ can both be written by block as lower-triangular matrices
\[
A = \begin{pmatrix}
A^{(m)} & \zero\\
\ast & \ast
\end{pmatrix}
\quad \textrm{and} \quad
B = \begin{pmatrix}
B^{(m)} & \zero\\
\ast & \ast
\end{pmatrix}
\]
with $A^{(m)}$ and $B^{(m)}$ two $m \times m$ square submatrices, with $0 < m \leq p,q$.
Moreover, assume that the only solution to the equation $A^{(m)} \tdot y^{(m)} \rhd B^{(m)} \tdot y^{(m)}$ of unknown $y^{(m)} \in \T^m$ is $y^{(m)} = \zero$. Then the equation $A \tdot y \rhd B \tdot y$ of unknown $y$ has no solution in $\R^q$.
\end{lem}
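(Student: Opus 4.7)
The plan is to adapt the proof of \Cref{lem:nsg2} (the one-sided analogue) in an essentially direct way, replacing the nonsingularity hypothesis on $A^{(m)}$ by the two-sided hypothesis on the pair $(A^{(m)}, B^{(m)})$. The overall strategy is a proof by contradiction: I assume a solution $y \in \R^q$ of $A \tdot y \rhd B \tdot y$ exists, and derive that its restriction to the first $m$ coordinates must equal $\zero$, contradicting finiteness.

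More concretely, suppose $y = (y_j)_{1 \leq j \leq q} \in \R^q$ satisfies $A \tdot y \rhd B \tdot y$, and set $y^{(m)} := (y_j)_{1 \leq j \leq m} \in \R^m$. First, I would exploit the block-triangular form of $A$ and $B$: since their upper-right blocks consist entirely of $\zero = -\infty$ entries, the tropical matrix-vector product in rows $1, \ldots, m$ decouples from the last $q - m$ coordinates of $y$. That is, for every $1 \leq i \leq m$, the $i$-th entry of $A \tdot y$ coincides with the $i$-th entry of $A^{(m)} \tdot y^{(m)}$, and similarly for $B$. Restricting the relation $A \tdot y \rhd B \tdot y$ to the first $m$ rows therefore yields precisely $A^{(m)} \tdot y^{(m)} \rhd B^{(m)} \tdot y^{(m)}$.

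The second step is then to invoke the assumption of the lemma: the only $y^{(m)} \in \T^m$ satisfying this is $y^{(m)} = \zero$, which forces $y_j = -\infty$ for every $1 \leq j \leq m$. This contradicts the standing assumption $y \in \R^q$, in which every coordinate is finite, completing the argument. I expect no substantive obstacle: the argument is an immediate block-decoupling followed by invocation of the hypothesis. The only point meriting care, exactly as in the one-sided case, is that the hypothesis is stated for unknowns ranging over $\T^m$ (so that $-\infty$ entries are admissible), which is precisely what enables the contradiction when passing back to a would-be finite solution $y \in \R^q$.
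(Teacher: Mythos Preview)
Your proof is correct and is precisely the ``immediate adaptation'' of \Cref{lem:nsg2} that the paper indicates; the paper does not spell out a separate proof for this lemma, and your block-decoupling followed by the contradiction with finiteness is exactly what is intended.
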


\subsubsection{The Shapley-Folkman lemma}

In this brief subsection, we recall the following lemma which will allow us to define a notion of row content adapted to the two-sided case.

\begin{lem}[Shapley-Folkman, see {\cite[Theorem 3.1.2]{Sch13}}]\label{lem:shapley-folkman}
Let $A_1, \ldots, A_k \subseteq \R^n$, and let
\[
x \in \sum_{i=1}^k \conv(A_i) \enspace .
\]
Then there is an index set $I \subseteq \{ 1, \ldots, k \}$ with $\vert I \vert \leq n$ such that
\[
x \in \sum_{i \in I} \conv(A_i) + \sum_{i \in \{ 1, \ldots, k \} \setminus I} A_i \enspace .
\]
\end{lem}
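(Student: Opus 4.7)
The plan is to reduce the Shapley--Folkman lemma to a standard vertex-counting argument on a polytope of representations, which in turn rests on Carathéodory's theorem applied to each summand.

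First, I would make the representation of $x$ finite-dimensional. By Carathéodory's theorem, each summand $x_i \in \conv(A_i)$ (in some initial decomposition $x = x_1 + \cdots + x_k$) can be expressed as a convex combination of at most $n+1$ points of $A_i$. Fix, once and for all, finite subsets $A_i' \subseteq A_i$ supporting such initial representations, and consider the (nonempty) polytope
\[
P = \left\{ (\mu_{ij})_{i \in [k],\, a_{ij} \in A_i'} : \mu_{ij} \geq 0,\; \sum_j \mu_{ij} = 1 \text{ for each } i,\; \sum_{i,j} \mu_{ij}\, a_{ij} = x \right\} \subseteq \R^{M},
\]
where $M = \sum_i |A_i'|$. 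This encodes all ways of writing $x$ as a sum of convex combinations drawn from the fixed finite candidate sets.

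Next, I would pick a vertex $\mu^\ast$ of $P$ and count its positive coordinates. At a vertex of a polytope cut out by the nonnegativity constraints $\mu_{ij} \geq 0$ together with linear equalities, the strictly positive coordinates correspond to linearly independent columns of the equality-constraint matrix; hence their number is at most the number of equality constraints, namely $n + k$ (the $n$ coordinate equations $\sum_{i,j}\mu_{ij}a_{ij} = x$ plus the $k$ normalization equations $\sum_j \mu_{ij} = 1$). Letting $m_i$ be the number of indices $j$ with $\mu_{ij}^\ast > 0$, the normalization forces $m_i \geq 1$, so
\[
\sum_{i=1}^{k} (m_i - 1) \;=\; \Big(\sum_i m_i\Big) - k \;\leq\; (n+k) - k \;=\; n.
\]
Defining $I := \{ i : m_i \geq 2 \}$ therefore gives $|I| \leq n$.

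Finally, for every $i \notin I$ the unique positive $\mu_{ij}^\ast$ must equal $1$, so the corresponding $a_{ij}$ is an actual element of $A_i$, and $x_i := \sum_j \mu_{ij}^\ast a_{ij} \in A_i$. For $i \in I$ we simply have $x_i \in \conv(A_i)$. Summing yields the desired decomposition $x \in \sum_{i \in I} \conv(A_i) + \sum_{i \notin I} A_i$. The only delicate step is the vertex count: one should be careful that the polytope $P$ is nonempty (guaranteed by the initial Carathéodory-based decomposition) and that the linear-algebra bound on positive coordinates at a vertex genuinely gives $n+k$ rather than something larger --- this is standard, but it is the point where the dimensions $n$ and $k$ of the two families of constraints must be separated, which is precisely what produces the sharp bound $|I| \leq n$ independent of $k$.
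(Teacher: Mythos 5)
The paper does not give its own proof of the Shapley--Folkman lemma; it simply cites it from Schneider's book. Your proof is correct and is the standard one: pass to a finite representation (Carathéodory, or just the definition of convex hull, suffices to get each $A_i'$ finite), form the polytope of all representations of $x$ over these finite ground sets, take a vertex, and observe that the LP-theoretic bound on the number of strictly positive coordinates at a basic feasible solution is the number of equality constraints $n+k$, whence $\sum_i(m_i-1)\leq n$ and the index set $I=\{i:m_i\geq 2\}$ does the job. The one inessential flourish is invoking Carathéodory to get $|A_i'|\leq n+1$: the argument only needs each $A_i'$ to be finite, and the final bound $|I|\leq n$ is independent of $|A_i'|$; otherwise the argument is complete and airtight.
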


In the case where the Minkowski sum of the $A_i$ is not full-dimensional, one has the following direct corollary of the previous result.

\begin{cor}\label{cor:shapley-folkman}
If in \Cref{lem:shapley-folkman} $\sum_{i=1}^k \conv(A_i)$ has (affine) dimension $1 \leq d < n$,
then the index set $I$ can be choosen such that $\vert I \vert \leq d$.
\end{cor}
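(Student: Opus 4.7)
The plan is to reduce to the Shapley-Folkman lemma applied in an ambient space of dimension exactly $d$ by translating each $A_i$ so that it lies in the direction of the affine hull of $S := \sum_{i=1}^k \conv(A_i)$.

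First, let $V \subseteq \R^n$ denote the $d$-dimensional linear subspace directing the affine hull of $S$, and fix an arbitrary element $a_i \in A_i$ for each $1 \leq i \leq k$. The key observation is that $\conv(A_i) - a_i \subseteq V$ for every $i$. Indeed, for any $z \in \conv(A_i)$, the element $z + \sum_{j \neq i} a_j$ belongs to $\sum_{j=1}^k \conv(A_j) = S$, whose affine hull is $\sum_{j=1}^k a_j + V$, and subtracting $\sum_{j=1}^k a_j$ yields $z - a_i \in V$. In particular, the translated sets $A_i' := A_i - a_i$ all sit inside $V$.

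Next, set $x' := x - \sum_{j=1}^k a_j$. Since $x \in S$, we have $x' \in V$, and moreover $x' \in \sum_{i=1}^k \conv(A_i')$ by construction. We now apply \Cref{lem:shapley-folkman} to the sets $A_1', \ldots, A_k'$, viewed inside the $d$-dimensional vector space $V \cong \R^d$ (any linear isomorphism will do). This produces an index set $I \subseteq \{1,\ldots,k\}$ with $|I| \leq d$ such that
\[
x' \in \sum_{i \in I} \conv(A_i') + \sum_{i \notin I} A_i' \enspace .
\]
Translating back by adding $\sum_{j=1}^k a_j$ and using $\conv(A_i') + a_i = \conv(A_i)$ and $A_i' + a_i = A_i$, we obtain the desired decomposition $x \in \sum_{i \in I} \conv(A_i) + \sum_{i \notin I} A_i$ with $|I| \leq d$.

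The only mildly subtle step is justifying that each translated convex hull $\conv(A_i) - a_i$ actually lies in $V$; this is the essential geometric content and relies on the fact that for a Minkowski sum, the affine dimensions of the individual summands are controlled by the dimension of the sum (once we account for translations). Everything else is a routine application of \Cref{lem:shapley-folkman} in the reduced $d$-dimensional ambient space.
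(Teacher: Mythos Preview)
Your proposal is correct and is exactly the natural argument the paper has in mind: the paper states this as a ``direct corollary'' without proof, and your translation of each $A_i$ into the $d$-dimensional direction space $V$ followed by an application of \Cref{lem:shapley-folkman} in $V\cong\R^d$ is precisely how one makes this direct. The verification that $\conv(A_i)-a_i\subseteq V$ is handled cleanly, and the translation back is routine.
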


\subsection{Proving the Tropical Positivstellensatz}

Before detailing the proof of \Cref{thm:positivstellensatz}, notice that we can simply consider the case where all relations in system \eqref{eq:ineq} are of the form $\rhd_i \in \{ \geq, = \}$. Indeed, suppose the system $f^+(x) \rhd f^-(x)$ comprises relations of the form $f^+_i(x) > f^-_i(x)$ for $i \in I \subseteq \{ 1, \ldots, k \}$. Let $\lambda > \unit = 0$, and let us consider the transformed system $\mathcal{S}(\lambda)$, in which every relation
\[
f_i^+ (x) > f^-_i(x)
\]
is replaced by
\[
f^+_i(x) \geq \lambda \tdot f_i^-(x) \enspace .
\]
Let $\mac^-_{\calE'}(\lambda)$ denote the negative Macaulay matrix associated to $\mathcal{S}(\lambda)$, and let us take $\rhd'_j$ to be equal to $\geq$ for $j \in I$, and to $\rhd_j$ otherwise.

Then, the relation $\mac^+_{\calE'} \tdot y \rhd \mac^-_{\calE'} \tdot y$ holds iff $\mac^+_{\calE'} \tdot y \rhd' \mac^-_{\calE'}(\lambda) \tdot y$ holds for some $\lambda > \unit$. Thus, if the theorem is proven for systems without strict inequalities, it follows that the latter relation is equivalent to the solvability of $\mathcal{S}(\lambda)$, which proves the result for systems which include strict inequalities.

In the following, we will therefore simply focus on systems such that $\rhd_i \in \{ \geq, = \}$ for all $1 \leq i \leq k$.

\subsubsection{A row content adapted to two-sided relations}\label{sec-row-ineq}

We consider the collection $f = (f_1, \ldots, f_k)$ of tropical polynomials, their associated Newton polytopes $Q_i$ as in \Cref{sec:pos} and the constants $r_i$ defined in \Cref{sec:pos}. We also consider the maps $h_1, \ldots, h_k$ defined as in \Cref{sec:caem}. Moreover, we set
\[
\widetilde{h} = h_1^{\supco r_1} \supco \cdots \supco h_k^{\supco r_k} \enspace ,
\]
and note that $\widetilde{Q} = \supp(\widetilde{h})$.

Recall that now $\calE$ is a Canny-Emiris set associated to the system $f^+ \rhd f^-$, that is $\calE = (\widetilde{Q} + \delta) \cap \Z^n$, where $\delta$ is a generic vector in $V + \Z^n$ and $V$ is the direction of the affine hull of $\widetilde{Q}$. 
Assume that there is no solution $x \in \R^n$ to the system $f^+(x) \rhd f^-(x)$, \textit{i.e.} that for all $x \in \R^n$, there exists $1 \leq j \leq k$ such that
\begin{equation}\label{eqn:ineq}
f^+_j(x) \nrhd_j f^-_j(x) \enspace .
\end{equation}
More precisely, \eqref{eqn:ineq} implies that either
\begin{subequations}\label{eqn:ineqbis}
\begin{equation}\label{eqn:ineq1} 
f^-_j(x) > f^+_j(x) \enspace ,
\end{equation}
or
\begin{equation}\label{eqn:ineq2} 
\rhd_j \textrm{ is an equality \quad and} \quad f^-_j(x) < f^+_j(x) \enspace .
\end{equation}
\end{subequations}
Then for $p \in \calE$, $(p-\delta, \widetilde{h}(p-\delta))$ is in the relative interior of a facet $\widetilde{F}$ of $\hypo(\widetilde{h}) = \sum_{i=1}^k r_i\hypo(h_i)$. This facet satisfies $\widetilde{F} =  \face(x, \widetilde{h})$ for some $x \in V$,
and then
\[
\widetilde{F} = r_1F_1 + \cdots + r_kF_k \enspace ,
\]
with $F_i = \face(x,h_i)$. This means that we can write $p - \delta = r_1q_1 + \cdots + r_kq_k$ with $(q_i, h_i(q_i)) \in F_i$ for all $1 \leq i \leq k$. Set $j$ to be the maximal index such that \eqref{eqn:ineq} is satisfied. We have 
\[
r_j(q_j, h_j(q_j)) \in r_jF_j = F_j + \cdots + F_j
\]
where the sum has $r_j$ terms. Moreover, $F_j$ is 
isomorphic to its 
projection $C_j := \cell(x,h_j)$ on $\R^n$, and the set of extremal points of $C_j$ is included in the finite set $\cell(x, \omega_j)$, where $\omega_j$ is the coefficient map of the tropical polynomial $f_j$ (see \Cref{sec:caem} and \Cref{rmk:remk}\ref{rmk:remk-b}).
 Moreover, by assumption $j$ and $x$ satisfy \eqref{eqn:ineq}, 
hence, in case \eqref{eqn:ineq1}, one must have  $\cell(x, \omega_j)\subset \calA^-_j$, because the maximum in the expression
\[
\max_{\alpha \in \calA_j} f_{j, \alpha} +\<x, \alpha> 
\]
cannot be attained by a monomial of $f^+_j$, and likewise in case \eqref{eqn:ineq2}, one must have 
 $\cell(x, \omega_j)\subset \calA^+_j$.
 Therefore, this means by definition of $r_j$ that the cell $C_j$, as well as the facet $F_j$ have dimension at most $r_j - 1$, and so does the facet $r_j F_j = F_j + \cdots + F_j$. Thus, applying \Cref{cor:shapley-folkman} (of the Shapley-Folkman lemma), there exists $(q'_j,h_j(q'_j)) \in F_j$ and $a_j$ an extremal point of $C_j$ such that $r_j (q_j,h_j(q_j)) = (r_j - 1)(q'_j,h_j(q'_j)) + (a_j,h_j(a_j))$. Moreover, since $a_j$ is an extremal point of $C_j$, we know that in case \eqref{eqn:ineq1}, one has $a_j \in \calA^-_j$, while in case \eqref{eqn:ineq2}, one has $a_j \in \calA^+_j$.

We then define the \textit{row content} of $p$ in this context to be equal to a couple $(j,a_j)$ satisfying the above properties. Note that the element  $a_j \in \calA_j$ satisfying the above condition need not be unique, but we will see in the proof that this does not matter. 
Since $a_j$ is an extremal point of $\cell(x,h_j)$, we have that $(a_j, h_j(a_j))$ is a vertex of $\hypo(h_j)$, which implies that $h_j(a_j)=f_{j,a_j}$, by \Cref{obs:ess}.

Now for any subset $\calE'$ of $\Z^n$ containing $\calE$, we can construct the matrices $\mac_{\mathcal{EE}'}^\pm = (m_{pp'}^\pm)_{(p,p') \in \calE \times \calE'}$ similarly to \Cref{sec:caem}, by setting the row $p$ of $\mac_{\mathcal{EE}'}^\pm$ to be the row $(j,p-a_j)$ of $\mac_{\calE'}^\pm$, if $(j,a_j)$ is the row content of $p$.

\subsubsection{The proof of \Cref{thm:positivstellensatz}}

In order to prove \Cref{thm:positivstellensatz}, we will make use of the following lemma.

\begin{lem}\label{lem:ndiag2}
Consider $h_1, \ldots, h_k$ and $\widetilde{h}$ as defined in \Cref{sec:caem}, and let $p \in \calE$ and let $(j,a_j)$ be its row content. Then for all $p' \in \calE$ and $a_j' \in \Z^n$ such that $p' = p - a_j + a'_j$, we have
\begin{equation}
\widetilde{h}(p' - \delta) \overset{\substack{\displaystyle (\dagger) \\ \mathstrut}}{\geq} \widetilde{h}(p - \delta) - h_j(a_j) + h_j(a'_j) \overset{\substack{\displaystyle (\ddagger) \\ \mathstrut}}{\geq} \widetilde{h}(p - \delta) - f^-_{j,a_j} + f^+_{j,a'_j} \enspace ,
\label{eqn:ndiag2}
\end{equation}
and moreover at least one of the two inequalities is strict.
\end{lem}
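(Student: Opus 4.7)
The proof will follow the template of \Cref{lem:ddiag}, adapted to the two-sided setting. The inequality $(\dagger)$ will come from sup-convolution, using a tight decomposition of $\widetilde{h}(p-\delta)$ isolating one copy of $h_j$; the inequality $(\ddagger)$ will come from the fact that $(a_j, h_j(a_j))$ is a vertex of $\hypo(h_j)$ lying in $\calA^-_j$; and the ``at least one strict'' statement will be obtained by a case split on whether $p' = p$.

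First, using associativity of $\supco$, I introduce the auxiliary function $\widetilde{g}_j := h_1^{\supco r_1} \supco \cdots \supco h_j^{\supco (r_j - 1)} \supco \cdots \supco h_k^{\supco r_k}$, so that $\widetilde{h} = h_j \supco \widetilde{g}_j$. The row-content construction from \Cref{sec-row-ineq} provides a decomposition $p - \delta = \sum_i r_i q_i$ with $(q_i, h_i(q_i)) \in F_i := \face(x, h_i)$, together with a Shapley--Folkman splitting $r_j q_j = (r_j - 1) q'_j + a_j$ where $(q'_j, h_j(q'_j)), (a_j, h_j(a_j)) \in F_j$. Linearity of each $h_i$ on its face $F_i$ gives $r_j h_j(q_j) = (r_j - 1) h_j(q'_j) + h_j(a_j)$, and combining the sup-convolution lower bound $\widetilde{g}_j(p - \delta - a_j) \geq (r_j-1) h_j(q'_j) + \sum_{i \neq j} r_i h_i(q_i)$ (using $p - \delta - a_j = (r_j-1) q'_j + \sum_{i \neq j} r_i q_i$) with the reverse inequality $\widetilde{h}(p-\delta) \geq h_j(a_j) + \widetilde{g}_j(p-\delta-a_j)$ yields the key identity $\widetilde{h}(p-\delta) = h_j(a_j) + \widetilde{g}_j(p-\delta-a_j)$, to which I refer as $(\star)$. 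Inequality $(\dagger)$ then follows by applying sup-convolution at $p' - \delta = a'_j + (p'-\delta-a'_j)$ and using $p' - a'_j = p - a_j$ together with $(\star)$. Inequality $(\ddagger)$ follows from $h_j(a_j) = f_{j,a_j} = f^-_{j,a_j}$---since $a_j$ is a vertex of $\hypo(h_j)$ lying in $\calA^-_j$ with $f^-_{j,a_j} > f^+_{j,a_j}$, by the row-content choice of $j$ in case \eqref{eqn:ineq1}---combined with the universal bound $h_j(a'_j) \geq f^+_{j,a'_j}$. Case \eqref{eqn:ineq2} is handled symmetrically by interchanging the roles of $+$ and $-$.

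For the strictness claim I split on $p'$ versus $p$. If $a'_j = a_j$ (equivalently $p' = p$), then $(\dagger)$ is an equality by $(\star)$, but $(\ddagger)$ reduces to $0 \geq f^+_{j,a_j} - f^-_{j,a_j}$, which is strict since $f^-_{j,a_j} > f^+_{j,a_j}$. If $p' \neq p$, I will show $(\dagger)$ is strict by contradiction: assuming equality, the tight sup-convolution decomposition $\widetilde{h}(p'-\delta) = h_j(a'_j) + \widetilde{g}_j(p-\delta-a_j)$, together with \Cref{cor:faces} applied at the facet $\face(x', \widetilde{h})$ containing $(p'-\delta, \widetilde{h}(p'-\delta))$ in its relative interior, places the summand $(p-\delta-a_j, \widetilde{g}_j(p-\delta-a_j))$ in $\face(x', \widetilde{g}_j)$. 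Since the same point already lies in the relative interior of $\face(x, \widetilde{g}_j)$---this uses $(\star)$ together with the vertex property of $(a_j, h_j(a_j))$ inside $F_j$---the two faces of $\hypo(\widetilde{g}_j)$ are forced to coincide by a dimension argument analogous to the one in \Cref{lem:ddiag}, whence $x = x'$ and $p' = p$, a contradiction.

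The main obstacle is the final step, verifying that $(p-\delta-a_j, \widetilde{g}_j(p-\delta-a_j))$ lies in the \emph{relative interior} of $\face(x, \widetilde{g}_j)$. In \Cref{lem:ddiag} this was automatic because $\face(x, h_j)$ was a singleton; here $F_j = \face(x, h_j)$ may be higher-dimensional, and one must exploit that the Shapley--Folkman splitting produces $a_j$ as a vertex of $F_j$ (equivalently, a vertex of $\hypo(h_j)$), so that the decomposition admits no freedom on the $h_j$ component and all interior variation near $(p-\delta, \widetilde{h}(p-\delta))$ within $\widetilde{F} = r_j F_j + \sum_{i \neq j} r_i F_i$ is absorbed by the $\widetilde{g}_j$ summand.
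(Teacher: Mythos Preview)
Your overall architecture matches the paper's: introduce $\widetilde g_j$ (the paper calls it $\widehat h_j$), get $(\dagger)$ from sup-convolution plus the tight decomposition at $p-\delta$, get $(\ddagger)$ from $h_j(a_j)=f^-_{j,a_j}$ and $h_j(a'_j)\geq f^+_{j,a'_j}$, and handle $p'=p$ by observing $(\ddagger)$ becomes $f^-_{j,a_j}>f^+_{j,a_j}$. The relative-interior obstacle you flag is also real, and the paper resolves it exactly as you suggest, using that $\face(x,\widetilde h)=\mathcal S+\face(x,\widetilde g_j)$ with $\mathcal S$ finite (the vertex set of $F_j$) together with genericity of $\delta$.

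However, there is a genuine gap in your $p'\neq p$ case. You claim that from $\face(x,\widetilde g_j)=\face(x',\widetilde g_j)$ you get $x=x'$ \emph{and} $p'=p$, reaching a contradiction. The step $x=x'$ is fine, but $x=x'$ does \emph{not} force $p'=p$: it only says that $p-\delta$ and $p'-\delta$ lie in the relative interior of the same facet $\widetilde F$ of $\hypo(\widetilde h)$, and a facet can certainly contain several lattice points. In \Cref{lem:ddiag} the extra step worked because $\face(x,h_j)$ was a singleton, forcing $a'_j=a_j$; here $\face(x,h_j)$ may have dimension up to $r_j-1$, so that shortcut is unavailable. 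In other words, $(\dagger)$ can hold with equality even when $p'\neq p$, so you cannot prove $(\dagger)$ is strict in that case.

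The paper's fix is to change the target: once $x=x'$, you get $(a'_j,h_j(a'_j))\in\face(x,h_j)$, hence $h_j(a'_j)+\langle x,a'_j\rangle=f_j(x)=f^-_j(x)>f^+_j(x)\geq f^+_{j,a'_j}+\langle x,a'_j\rangle$, so $h_j(a'_j)>f^+_{j,a'_j}$ and it is $(\ddagger)$ that is strict. Thus the correct dichotomy is not ``$(\dagger)$ strict when $p'\neq p$'' but rather ``if $(\dagger)$ is an equality then $(\ddagger)$ is strict''.
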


\begin{proof}
By setting
\[
\widehat{h}_j = \underbrace{h_1 \supco \cdots \supco h_1}_{r_1 \textrm{ terms}} \supco \cdots \supco \underbrace{h_j \supco \cdots \supco h_j}_{r_j - 1 \textrm{ terms}} \supco \cdots \supco \underbrace{h_k \supco \cdots \supco h_k}_{r_k \textrm{ terms}} \enspace ,
\]
so that $\widetilde{h} = h_j \supco \widehat{h}_j$, the exact same reasoning as in the proof of \Cref{lem:ddiag} shows that
\[
\widetilde{h}(p' - \delta) \geq h_j(a'_j) + \widehat{h}_j(p - \delta - a_j) \enspace ,
\]
and that if $p'=p$, which entails that $a'_j=a_j$, then this inequality is an equality. This implies inequality $(\dagger)$.

Inequality $(\ddagger)$ simply follows from the fact that $f^-_{j,a_j} = h_j(a_j)$ and $f^+_{j,a'_j} \leq h_j(a'_j)$.

We now show that either $(\dagger)$ or $(\ddagger)$ is strict. If $p' = p$, then inequality $(\ddagger)$ reduces to $f^-_{j,a_j} \geq f^+_{j,a_j}$ which is known to be strict. Now assume that $p' \neq p$, and suppose that the equality is achieved in $(\dagger)$. Then this implies that
\begin{equation}\label{eqn:diag2}
(p' - \delta, \widetilde{h}(p' - \delta)) = (a'_j, h_j(a'_j)) + (p - \delta - a_j, \widehat{h}_j(p - \delta - a_j)) \enspace .
\end{equation}
Now consider $x, x' \in \R^n$ such that $F = \face(x,\widetilde{h})$ and $F' = \face(x',\widetilde{h})$ are the facet in the interior of which $(p - \delta, \widetilde{h}(p - \delta))$ and $(p' - \delta, \widetilde{h}(p' - \delta))$ respectively lie. Then from \Cref{cor:faces}, we have
\[
(a'_j, h_j(a'_j)) \in \face(x', h_j) \quad \textrm{and} \quad (p - \delta - a_j, \widehat{h}_j(p - \delta - a_j)) \in \face(x', \widehat{h}_j) \enspace .
\]
However, we also know from equality (\ref{eqn:diag2}) and \Cref{cor:faces} that
\[
(p - \delta - a_j, \widehat{h}_j(p - \delta - a_j)) \in \face(x, \widehat{h}_j) \enspace .
\]
Moreover, by the Shapley-Folkman lemma (\Cref{lem:shapley-folkman}), we have that
\[
r_j \face(x,h_j) = \mathcal{S} + (r_j - 1) \face(x,h_j)
\]
with $\mathcal{S} := \{ (\alpha, h_j(\alpha)) \in \Z^n \times \R : (\alpha, h_j(\alpha)) \textrm{ is a vertex of } \face(x,h_j) \}$ which is in particular a finite set, and thus
\[
\face(x, \widetilde{h}) = \mathcal{S} + \face(x, \widehat{h}_j) \enspace .
\]
Since $(p - \delta, h(p - \delta))$ is in the relative interior of $\face(x, \widetilde{h})$, and since $\delta$ was taken generic, then it follows that $(p - \delta - a_j, \widehat{h}_j(p - \delta - a_j))$ is in the relative interior of $\face(x, \widehat{h}_j)$, and that $\face(x, \widehat{h}_j)$ is a facet of $\hypo(\widehat{h}_j)$. Therefore, since $(p- \delta -a_j, \widehat{h}_j(p - \delta - a_j))$ is in both $\face(x, \widehat{h}_j)$ and $\face(x', \widehat{h}_j)$ and since it is in particular in the relative interior of the first face, we deduce that
\[
\face(x, \widehat{h}_j) \subseteq \face(x', \widehat{h}_j) \ \enspace .
\]
Since by \Cref{rmk:remk} (a), $\face(x', \widehat{h}_j)$ is a proper face of $\hypo(\widehat{h}_j)$, then it implies that it is also a facet of $\hypo(\widehat{h}_j)$, and thus that
\[
\face(x, \widehat{h}_j) = \face(x', \widehat{h}_j) \enspace .
\]
Hence, it follows from \Cref{rmk:remk} (b) that $x = x'$, and therefore $(a'_j, h_j(a'_j)) \in \face(x, h_j)$.
Therefore, we have
\[
h_j(a'_j) + \<x, a'_j> = \max_{q_j \in Q_j} h_j(q_j) + \<x, q_j> = \max_{\alpha \in \calA_j} f_{j,\alpha} + \<x, \alpha> = f(x) \enspace ,
\]
and since $f^-(x) > f^+(x)$, this implies that $a'_j \in A^-_j$, and thus
\[
f^-_{j,a'_j} + \<x, a'_j> = f^-(x) > f^+(x) \geq f^+_{j,a'_j} + \<x, a'_j> \enspace ,
\]
hence $f^+_{j,a'_j} < f^-_{j,a'_j} = h_j(a'_j)$, thus inequality $(\ddagger)$ is strict.
\end{proof}

\begin{proof}[Proof of \Cref{thm:positivstellensatz}.]
We keep the notation of \Cref{sec-row-ineq}. We first look at the case of systems in the form $f^+(x) \geq f^-(x)$, as the result for the general case will follow from this case.
For any subset $\calE'$ from $\Z^n$ containing $\calE$, we can construct the submatrices $\mac_{\mathcal{EE}'}^+ = (m_{pp'}^+)_{(p,p') \in \calE \times \calE'}$ and $\mac_{\mathcal{EE}'}^- = (m_{pp'}^-)_{(p,p') \in \calE \times \calE'}$ of respectively $\mac_{\calE'}^+$ and $\mac_{\calE'}^-$ similarly to \Cref{sec:caem}, by setting $m_{pp'}^+$ to be the coefficient in $p'$ of $X^{p-a_j}f_j^+$ where $(j,a_j)$ is the row content of $p$, \textit{i.e.}
\[
m_{pp'}^\pm = \left\{\begin{array}{ll}
f_{j,a'_j}^\pm & \textrm{if} \quad a'_j \in \calA_j^\pm\\
\zero & \textrm{otherwise,}
\end{array} \right.
\]
where $a'_j \in \Z^n$ is such that $p'=p-a_j+a'_j$, as well as the matrices $\widetilde{\mac}_{\mathcal{EE}}^{\pm} = (\widetilde{m}_{pp'}^{\pm})_{(p,p') \in \calE \times \calE}$ defined by
\[
\widetilde{m}_{pp'}^{\pm} = m_{pp'}^{\pm} - \widetilde{h}(p'-\delta)
\]
for all $p,p' \in \calE$. We now show that the matrix $\widetilde{\mac}_{\mathcal{EE}}^+$ is diagonally dominated by $\widetilde{\mac}_{\mathcal{EE}}^-$, which will lead to the desired result. Let $p,p' \in \calE$. Then:
\begin{itemize}[label=$\diamond$]
\item if $p' \neq p$, then $a'_j \neq a_j$ and thus the inequality $\widetilde{m}^-_{pp} > \widetilde{m}^+_{pp'}$ is equivalent to the inequality 
\begin{equation}\label{eq:ddiag}
\widetilde{h}(p'-\delta) > \widetilde{h}(p-\delta) - f^-_{j,a_j} + f^+_{j,a'_j} \enspace ,
\end{equation}
which follows directly from \Cref{lem:ndiag2};
\item if $p'=p$, then $a'_j = a_j$ and the inequality $\widetilde{m}^-_{pp} > \widetilde{m}^+_{pp'}$ is equivalent to the inequality
\[
f^-_{j,a_j} > f^+_{j,a_j},
\]
which is satisfied since $f^-_{j,a_j} + \<x, a_j> = f^-(x) > f^+(x) \geq f^+_{j,a_j} + \<x, a_j>$.
\end{itemize}
Thus, the matrix $\widetilde{\mac}_{\mathcal{EE}}^+$ is diagonally dominated by $\widetilde{\mac}_{\mathcal{EE}}^-$, and therefore thanks to \Cref{lem:ddom}, this shows that the only solution to the inequation
\[
\widetilde{\mac}_{\mathcal{EE}}^+ \tdot \widetilde{z} \geq \widetilde{\mac}_{\mathcal{EE}}^- \tdot \widetilde{z}
\]
of unknown $\widetilde{z} \in \T^{\calE}$ is $\widetilde{z} = \zero$.

Finally, using \Cref{lem-chang-diag} and \Cref{lem-sousmatrice},
 we find that this implies that there cannot exist a  vector $y \in \R^{\calE'}$ such that $\mac_{\calE'}^+ \tdot y \geq \mac_{\calE'}^- \tdot y$.\\

Now for the general case, assume that there is no solution in $\R^n$ to the system $f^+(x) \rhd f^-(x)$.
Then, for all $x \in \R^n$, there exists $1 \leq j \leq k$ such that
\[
f^+_j(x) \nrhd_j f^-_j(x) \enspace .
\]
We reiterate the construction of the matrices $\mac^\pm_{\calE \calE}$ as
above, as well as the renormalized matrices $\widetilde{\mac}^\pm_{\calE \calE}$, with the only difference that for $p \in \calE$ with row content $(j, a_j)$, 
one might have either $a_j \in \calA_j^+$ or $a_j \in \calA_j^-$. In fact, applying \eqref{eq:ddiag} gives us this time that
\[
\widetilde{m}^-_{pp} > \max_{p' \in \calE} \widetilde{m}^+_{pp'} \iff a_j \in \calA_j^- \quad \textrm{and} \quad \widetilde{m}^+_{pp} > \max_{p' \in \calE}  \widetilde{m}^-_{pp'} \iff a_j \in \calA_j^+ \enspace .
\]
In particular, this shows that the matrices $\widetilde{\mac}^+_{\calE \calE}$ and $\widetilde{\mac}^-_{\calE \calE}$ satisfy the condition \eqref{eq:perddom} of \Cref{cor:ddom}. It thus follows from \Cref{lem:ndiag2} that the matrices $\widetilde{\mac}^+_{\calE \calE}$ and $\widetilde{\mac}^-_{\calE \calE}$ satisfy the condition \eqref{eq:perddom} of \Cref{cor:ddom}, hence the only solution to the system $\widetilde{\mac}_{\mathcal{EE}}^+ \tdot \widetilde{z} \rhd \widetilde{\mac}_{\mathcal{EE}}^- \tdot \widetilde{z}$ of unknown $\widetilde{z} \in \T^{\calE}$ is $\widetilde{z} = \zero$, and using \Cref{lem-chang-diag} and \Cref{lem-sousmatrice}, we obtain there is no solution $y \in \R^{\calE'}$ to the equation $\mac_{\calE'}^+ \tdot y \rhd \mac_{\calE'}^- \tdot y$.
\end{proof}

\begin{rmk}
Note that in the previous construction, the matrices $\mac_{\calE \calE}^\pm$ need not necessarily be submatrices of the matrices $\mac_\calE^\pm$, as the construction of the row content in this case could allow for one row of the Macaulay matrix to appear multiple times in $\mac_{\calE \calE}^\pm$. This however does not have any impact on the outcome of the proof.
\end{rmk}

\subsection{The case of hybrid systems}

In fact, \Cref{thm:positivstellensatz} can be generalized even further so as to also include relations of the form $f_i(x) \bal \zero$. More precisely, let $f_1^\pm, \ldots, f_k^\pm$ be a collection of pairs of tropical polynomials and let $f_{k+1}, \ldots, f_\ell$ be a second collection of tropical polynomials. Keeping the notation of \Cref{sec:pos}, the system we consider is the following
\begin{equation}\label{eq:hybrid}\tag{$\mathscr{S}_\textrm{pol}$}
\left\{\begin{array}{ll}
f^+_i(x) \rhd_i f^-_i(x) & \textrm{for } 1 \leq i \leq k\\
f_i(x) \bal \zero & \textrm{for } k+1 \leq i \leq \ell
\end{array}\right. \enspace ,
\end{equation}
with $\rhd_i \in \{ \geq, =, > \}$ for all $1 \leq i \leq k$. Recall also that
\[
r_i = \left\{\begin{array}{ll}
\dim(\aff(\calA^-_i)) + 1 & \textrm{if } \rhd_i \in \{ \geq, > \}\\
\max\left(\dim(\aff(\calA^-_i)), \dim(\aff(\calA^+_i))\right) + 1 & \textrm{if } \rhd_i \in \{ = \} \enspace .
\end{array}\right.
\]
We define Canny-Emiris subsets $\calE$ of $\Z^n$ associated to system \eqref{eq:hybrid} to be sets of the form
\[
\calE :=  \left( \widetilde{Q} + \delta \right) \cap \Z^n \quad \textrm{with} \quad \widetilde{Q} = r_1Q_1 + \cdots + r_kQ_k + Q_{k+1} + \cdots + Q_\ell \enspace ,
\]
where $\delta$ is a generic vector in $V + \Z^n$, with $V$ the direction of the affine hull of $\widetilde{Q}$. Then we have the following result.

\begin{thm}[Sparse tropical Positivstellensatz for hybrid systems]
\label{thm:positivstellensatz-hybrid}
The system \eqref{eq:hybrid} has a solution $x \in \R^n$ if and only if the linear tropical system
\begin{equation}\label{eq:hybrid-linearized}\tag{$\mathscr{S}_\textrm{lin}$}
\left\{\begin{array}{l}
\mac_{\calE'}^+ \tdot y \rhd \mac_{\calE'}^- \tdot y\\
\mac_{\calE'} \tdot y \bal \zero
\end{array}\right. \enspace 
\end{equation}
has a solution $y \in \R^{\calE'}$, where $\mac^+, \mac^-$ is the pair of Macaulay matrices associated to system $f_i^+(x) \rhd_i f_i^-(x)$ for $1 \leq i \leq k$, $\mac$ is the Macaulay matrix associated to system $f_i(x) \bal \zero$ for $k + 1 \leq i \leq \ell$, and $\calE'$ is any subset of $\Z^n$ containing a nonempty Canny-Emiris subset $\calE$ of $\Z^n$ associated to the system \eqref{eq:hybrid}.
\end{thm}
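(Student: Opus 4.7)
The plan is to unify the proofs of \Cref{thm:nullstellensatz} and \Cref{thm:positivstellensatz} by running a single Canny--Emiris construction with the ``hybrid'' sup-convolution
\[
\widetilde{h} = h_1^{\supco r_1} \supco \cdots \supco h_k^{\supco r_k} \supco h_{k+1} \supco \cdots \supco h_\ell \enspace ,
\]
whose support is exactly $\widetilde{Q}$. As in \Cref{rmk:positivstellensatz}, one first eliminates strict inequalities by the usual $\lambda > \unit$ trick, so we may assume $\rhd_i \in \{\geq, =\}$. The ``only if'' direction is immediate via the Veronese embedding \eqref{eq:veronese}. We then prove the contrapositive of the ``if'' direction.

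Assume \eqref{eq:hybrid} has no solution $x \in \R^n$. For a generic $\delta$ and for each $p \in \calE$, the point $(p-\delta,\widetilde{h}(p-\delta))$ lies in the relative interior of a unique non-vertical facet of $\hypo(\widetilde{h})$, corresponding to some $x \in V$, with
\[
\face(x,\widetilde{h}) = r_1\,\face(x,h_1) + \cdots + r_k\,\face(x,h_k) + \face(x,h_{k+1}) + \cdots + \face(x,h_\ell)
\]
by \Cref{cor:faces}. By assumption, there is a maximal index $j$ such that either $f_j^+(x) \nrhd_j f_j^-(x)$ (if $j \leq k$) or $f_j(x) \nbal \zero$ (if $j > k$). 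Define the row content of $p$ as follows: if $j > k$ then $\cell(x,h_j) = \{a_j\}$ is a singleton (the Nullstellensatz case, as in \Cref{sec:caem}); if $j \leq k$ then $\cell(x,h_j)$ has dimension at most $r_j-1$, and the Shapley--Folkman argument of \Cref{sec-row-ineq} produces an extremal point $a_j \in \calA_j^\varepsilon$, where $\varepsilon \in \{+,-\}$ records which side of the relation $\rhd_j$ fails at $x$. This determines a row of $\mac_{\calE'}$ (when $j > k$) or of the pair $(\mac_{\calE'}^+, \mac_{\calE'}^-)$ (when $j \leq k$), indexed by $(j, p-a_j)$ in each case. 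Stack these rows into a square matrix indexed by $\calE \times \calE$, which extends to a matrix indexed by $\calE \times \calE'$ with zero blocks in the extra columns, exactly as in the two previous proofs.

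The core of the argument will be a single hybrid dominance inequality: for each $p \in \calE$ of row content $(j,a_j)$ and each $p' \in \calE$ with $p' = p - a_j + a_j'$ for some $a_j' \in \Z^n$, we have
\[
\widetilde{h}(p'-\delta) \;\geq\; \widetilde{h}(p-\delta) - h_j(a_j) + h_j(a_j') \enspace ,
\]
which is proved exactly as in \Cref{lem:ddiag} and \Cref{lem:ndiag2} using $\widetilde{h} = h_j \supco \widehat{h}_j$ with an appropriate $\widehat{h}_j$, and is strict whenever $p' \neq p$ (the Shapley--Folkman step of \Cref{lem:ndiag2} goes through because $\face(x,h_j)$ remains a face of dimension at most $r_j-1$ in the $j \leq k$ case, and $\face(x,h_j) = \{(a_j,h_j(a_j))\}$ in the $j > k$ case). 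Renormalizing each column $p'$ by subtracting $\widetilde{h}(p'-\delta)$, the resulting matrices $\widetilde{\mac}_{\calE\calE}$, $\widetilde{\mac}_{\calE\calE}^+$, $\widetilde{\mac}_{\calE\calE}^-$ then satisfy two kinds of diagonal dominance simultaneously: rows with $j > k$ are tropically strictly diagonally dominant in $\widetilde{\mac}_{\calE\calE}$ (as in the proof of \Cref{thm:nullstellensatz}), and rows with $j \leq k$ satisfy the condition \eqref{eq:perddom} between $\widetilde{\mac}_{\calE\calE}^+$ and $\widetilde{\mac}_{\calE\calE}^-$ (as in the proof of \Cref{thm:positivstellensatz}).

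The final step is a ``hybrid'' version of \Cref{lem:ddns}/\Cref{cor:ddom}: if $y \in \T^{\calE}$ solves both $\widetilde{\mac}_{\calE\calE} \tdot y \bal \zero$ and $\widetilde{\mac}_{\calE\calE}^+ \tdot y \rhd \widetilde{\mac}_{\calE\calE}^- \tdot y$, then taking $p$ realizing $y_p = \max_{p'\in\calE} y_{p'}$, the strict diagonal dominance at row $p$ (of either type) forces $y_p = -\infty$, so $y = \zero$. Combined with \Cref{lem:nsg1}/\Cref{lem-chang-diag} to absorb the rescaling and \Cref{lem:nsg2}/\Cref{lem-sousmatrice} to pass from the square block to the full $\calE'$-block lower-triangular matrix, this rules out a finite solution $y \in \R^{\calE'}$ to \eqref{eq:hybrid-linearized}. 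The principal technical obstacle is the bookkeeping around the two simultaneous notions of dominance in the maximal-coordinate argument, but since the per-row conclusions are strict and decoupled by index type, this extension is essentially routine once the hybrid inequality on $\widetilde{h}$ is established.
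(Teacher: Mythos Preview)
Your proposal follows the same strategy as the paper's proof and is essentially correct. One imprecision is worth flagging: you claim the inequality
\[
\widetilde{h}(p'-\delta) \;\geq\; \widetilde{h}(p-\delta) - h_j(a_j) + h_j(a_j')
\]
is strict whenever $p' \neq p$. This is true for $j > k$ (where $\face(x,h_j)$ is a singleton, as in \Cref{lem:ddiag}), but for $j \leq k$ it can fail: equality at the $h_j$-level may occur with $p' \neq p$, and what \Cref{lem:ndiag2} actually shows in that case is that $(a_j',h_j(a_j')) \in \face(x,h_j)$, hence $a_j'$ lies on the dominating side $\calA_j^\varepsilon$ and the second step $h_j(a_j') \geq f_{j,a_j'}^{\bar\varepsilon}$ is strict. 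The condition \eqref{eq:perddom} you need compares Macaulay entries $f^\pm_{j,\cdot}$, not $h_j(\cdot)$, so you must chain both inequalities $(\dagger)$ and $(\ddagger)$ of \Cref{lem:ndiag2} rather than relying on strictness of the first alone. With this correction in place, your argument coincides with the paper's.
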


\begin{proof}
This proof is mainly combination of the proofs of \Cref{thm:nullstellensatz,thm:positivstellensatz}, so we will just give the outline of the proof and skip the details to avoid repetition. Moreover, as previously, the existence of a root $x \in \R^n$ to the polynomial easily implies the existence of an element $y \in \R^{\calE'}$ solution of the system \eqref{eq:hybrid-linearized}, given by the Veronese embedding \eqref{eq:veronese}.

Assume that system \eqref{eq:hybrid} does not have a root in $\R^n$. Then this means that for all $x \in \R^n$, either $f_j(x) \nbal \zero$ for some $k+1 \leq j \leq \ell$, or $f_j^+(x) \nrhd_j f_j^-(x)$ for some $1 \leq i \leq k$ (or possibly both cases can happen at the same time). This allows us to construct a notion of row content for this case in the following way.

First, we define the maps $h_1, \ldots, h_\ell$ as in the proofs of \Cref{thm:nullstellensatz,thm:positivstellensatz}, and set
\[
\widetilde{h} := h_1^{\supco r_1} \supco \cdots \supco h_k^{\supco r_k} \supco h_{k+1} \supco \cdots \supco h_\ell \enspace .
\]
Then for all $p \in (\widetilde{Q} + \delta) \cap \Z^n$, the point $(p-\delta, \widetilde{h}(p-\delta))$ lies in the relative interior of a facet $\widetilde{F}$ of $\hypo(\widetilde{h})$, and this facet can be written as
\[
\widetilde{F} = r_1F_1 + \cdots + r_kF_k + F_{k+1} + \cdots + F_\ell \enspace ,
\]
where for all $1 \leq j \leq \ell$, $F_j$ is a facet of $\hypo(h_j)$, and there exists an element $x$ of $V$ such that $\widetilde{F} = \face(\widetilde{h},x)$ and for all $1 \leq j \leq \ell$, $F_j = \face(h_j,x)$. Then we have the following two possible cases
\begin{enumerate}[label=\it (\roman*)]
\item if there exists $k+1 \leq j \leq \ell$ such that $f_j(x) \nbal \zero$, then $F_j$ is a singleton $\{ a_j \}$, and we set the row content of $p$ to be the pair $(j, a_j)$, where such an index $j$ is taken to be maximal;
\item if $f_j(x) \bal \zero$ for all $k+1 \leq j \leq \ell$, then since $x$ is not a root of $\eqref{eq:hybrid}$, this implies that there exists $1 \leq j \leq k$ such that $f_j^+(x) \nrhd_j f_j^-(x)$, and this time we can construct the row content of $x$ using the Shapley-Folkman lemma as in the proof of \Cref{thm:positivstellensatz}.
\end{enumerate}
Now for $p \in \calE$ with row content $(j, a_j)$, if $1 \leq j \leq k$, then we apply \Cref{lem:ndiag2} to obtain that for all $p' \in \calE$ and $a'_j \in \Z^n$ such that $p' = p - a_j + a'_j$,
\[
\widetilde{h}(p' - \delta) > \widetilde{h}(p - \delta) - f_{j,a_j}^- + f_{j,a'_j}^+ \enspace ,
\]
and if $k+1 \leq j \leq \ell$, then we apply \Cref{lem:ddiag} to obtain that for all $p' \in \calE$ and $a'_j \in \Z^n$ such that $p' = p - a_j + a'_j$,
\[
\widetilde{h}(p' - \delta) > \widetilde{h}(p - \delta) - f_{j,a_j} + f_{j,a'_j} \enspace ,
\]
with equality if and only if $p' = p$. We denote $\calE_1$ the set of $p \in \calE$ satisfying the first condition, and $\calE_2$ the set of $p \in \calE$ satisfying the second condition, such that $\calE = \calE_1 \sqcup \calE_2$.

Thus, we can construct as previously matrices $\mac^+_{\calE_1 \calE}$, $\mac^-_{\calE_1 \calE}$ and $\mac_{\calE_2 \calE}$ such that $\mac^-_{\calE_1 \calE}$ and $\mac^+_{\calE_1 \calE}$ satisfy the condition \eqref{eq:perddom} of \Cref{cor:ddom}, and $\mac_{\calE_2 \calE}$ is diagonally dominant in the tropical sense.

From this point, it can easily be shown using a combination of \Cref{lem:ddns} and \ref{lem:ddom} that the only solution $z \in \T^{\calE}$ to the system
\[
\left\{\begin{array}{l}
\mac_{\calE_1 \calE}^+ \tdot z \rhd \mac_{\calE_1 \calE}^- \tdot z\\
\mac_{\calE_2 \calE} \tdot z \bal \zero \enspace ,
\end{array}\right.
\]
is ${z} = \zero$,
and thus it follows that there is no solution $y \in \R^{\calE'}$ to system \eqref{eq:hybrid-linearized}.
\end{proof}

\section{Concluding remarks}
We have developed a new approach to the tropical Nullstellensatz, initially established by Grigoriev and Podolskii~\cite{GP18}. Our proof relies  on a fundamental tool of sparse elimination theory, the Canny-Emiris set~\cite{CE93,Emi05,Stu94}. This leads to improved bounds on the truncation degree. We also obtained tropical Positivstellens\"atze, allowing one to deal both with strict and weak inequalities. For the simplest systems of tropical relations --- requiring maxima to be achieved twice --- this approach leads to a tight truncation degree, as discussed in~\Cref{rmk:tight}. For more general systems, our proof method is based on a new definition of the row content, exploiting a discrete geometry result (the Shapley-Folkman lemma), and it leads to a `dilation' of the Canny-Emiris set by a factor up to $n+1$. Whether this dilation could be avoided is left as an open problem.

We provided in~\cite{bereau2023}, following~\cite{ABG23a}, a first implementation of the present method. So far, we only considered the feasibility problem. We note however that by a dichotomy approach, one can find an explicit (exact) solution of a feasible system with rational coefficients, using the present tools. We also observe that in some special cases, unfeasibility certificates may be obtained by considering smaller linearizations. This suggests the possibility to develop accelerated algorithms, for instance, constructing dynamically linearizations with increased rows and columns sets. Such refinements will be developed elsewhere.

\bibliographystyle{alpha}
\bibliography{tropical-nullstellensatz-refs}

\end{document}